\newtheorem{theorem}{Theorem}
\newtheorem{corollary}{Corollary}
\newtheorem{proposition}{Proposition}
\newtheorem{lemma}{Lemma}[section]
\theoremstyle{definition}
\newtheorem{definition}{Definition}
\newtheorem{convention}{Convention}
\theoremstyle{remark}
\newtheorem{remark}{Remark}
\theoremstyle{remark}
\theoremstyle{remark}
\numberwithin{equation}{section}
\def\A{{{\mathbb A}}}
\def\I{{{\mathbb I}}}
\def\G{{{\mathbb G }}}
\def\E{{{\mathbb E }}}
\def\D{{{\mathbb D }}}
\def\C{{{\mathbb C }}}
\def\P{{{\mathbb P }}}
\def\L{{{\mathbb L }}}
\def\U{{{\mathbb U}}}
\def\V{{{\mathbb V }}}
\def\CC{{{\mathcal C}}}
\def\EE{{{\mathcal E}}}
\def\FF{{{\mathcal F}}}
\def\HH{{{\mathcal H}}}
\def\II{{{\mathcal I}}}
\def\OO{{{\mathcal O}}}
\def\QQ{{{\mathcal Q}}}
\def\TT{{{\mathcal T}}}
\def\EExt{{{\mathcal E}xt}}
\def\TTor{{{\mathcal T}or}}
\def\FFitt{{{\mathcal F}itt}}
\def\Spec{{{\rm Spec \,}}}
\def\Sing{{{\rm Sing \,}}}
\def\Proj{{{\rm Proj \,}}}
\def\Grass{{{\rm Grass }}}
\def\Quot{{{\rm Quot \,}}}
\def\Supp{{{\rm Supp \,}}}
\def\Pic{{{\rm Pic \,}}}
\def\dim{{{\rm dim \,}}}
\def\codim{{{\rm codim \,}}}
\def\hd{{{\rm hd \,}}}
\def\coker{{{\rm coker \,}}}
\def\ker{{{\rm ker \,}}}
\def\rank{{{\rm rank \,}}}
\def\im{{{\rm im \,}}}
\def\id{{{\rm id }}}
\def\tors{{{\rm tors \,}}}
\def\red{{{\rm red}}}
\def\gr{{{\rm gr}}}
\title
{Moduli of Admissible Pairs for Arbitrary Dimension, II: Functors and Moduli}
\author{{Nadezhda V. Timofeeva}\footnote{Center of Integrable Systems, P.G. Demidov Yaroslavl State University, Russia}}
\date{}
\begin{document}
\large
\maketitle

\renewcommand{\refname}{References}
\renewcommand{\proofname}{Proof.}
\thispagestyle{empty}

\maketitle{}\vspace{1cm} 
\begin{quote}
\noindent{\sc Abstract. } Admissible pairs $((\widetilde S, \widetilde
L), \widetilde E)$ consisting of an $N$-dimensional projective scheme~$\widetilde S$ of certain class with a special ample invertible sheaf $\widetilde L$ and a locally free $\OO_{\widetilde S}$-sheaf $\widetilde E$ are considered. An admissible pair can be produced in the procedure of a transformation (which is called a resolution) of a torsion-free coherent sheaf $E$ on a nonsingular  $N$-dimensional projective algebraic variety $S$ to a locally free sheaf $\widetilde E$ on some projective scheme  $\widetilde S$. Notions of stability and semistability of admissible pairs and a moduli functor for semistable admissible pairs are introduced. Also the relation of the stability and semistability for admissible pairs to the classical  stability and semistability for coherent sheaves under the resolution is examined.  Morphisms between the moduli functor  of
admissible semistable pairs   and  the  Gieseker--Maruyama moduli functor (of semistable coherent torsion-free sheaves) with the same Hilbert polynomial on a nonsingular $N$-dimens\-ional projective algebraic variety are  constructed. Examining these morphisms
it is shown that the moduli
scheme for semistable admissible pairs $((\widetilde S, \widetilde
L), \widetilde E)$ is isomorphic to the Gieseker--Maruyama
moduli scheme for coherent sheaves. The considerations involve all the existing components of these moduli
functors and of their corresponding moduli schemes.

Bibliography: 25 items.
\medskip

\noindent{\bf Keywords:} moduli space, semistable coherent
sheaves, semi\-stable admissible pairs, vector bundles, compactification of moduli,
$N$-dimensional alge\-braic variety.
 \end{quote}

\bigskip

\section{Introduction}

The present paper is a continuation of \cite{Tim12}, where a resolution of a flat family of torsion-free coherent sheaves on a nonsingular projective variety  was developed. This resolution procedure is called in \cite{Tim12} the standard resolution.

We construct the scheme of moduli for
admissible pairs (defined below) of arbitrary dimension. In the series
of articles \cite{Tim10}, \cite{Tim0}, \cite{Tim9}---\cite{Tim3} the construction was done in the case when the initial variety and admissible schemes had dimension two. Admissible schemes were obtained
from a nonsingular projective algebraic surface
$S$ by some transform\-ation. The final result was an isomorphism between the moduli functor of
Gieseker-semistable torsion-free coherent sheaves of rank $r$ and with
Hilbert polynomial $rp(n)$ on the surface $S$ with fixed
polarization $L$ and the moduli functor of semistable admissible pairs
$((\widetilde S, \widetilde L), \widetilde E)$. Each pair consists
of an admissible scheme $\widetilde S$ with a distinguished polarization
$\widetilde L$ and of a semistable locally free sheaf $\widetilde E$ of
rank $r$ and with Hilbert polynomial $rp(n)$. The advantage of this
 moduli functor is that its moduli scheme is isomorphic to
the Gieseker--Maruyama moduli scheme for the  same rank and  Hilbert
polynomial. In particular, this can be interpreted as a method to produce
a compactification of the moduli space of stable vector bundles by vector
bundles on some special (admissible) schemes instead of the classical
compactification by attaching non-locally free coherent sheaves.

The reason for the interest to this subject is the Kobayashi--Hitchin
correspondence to be discussed below. It validates application of algebraic geometrical methods to
problems of differential geometrical or gauge theoretical setting by
replacing consideration of the moduli of connections in the vector
bundle (including vector bundles supplied with additional structures)
by consideration of the moduli for slope-stable vector bundles.

 Consider a compact complex algebraic variety $X$,  a complex vector
 bundle $E$ on $X$, Hermitian metrics $g$ on $X$ and $h$ on $E$
 respectively. A holomorphic vector bundle on a compact complex manifold
 with a Hermitian metric $g$ is {\it stable}
\cite{LuTel} if and only if $E$ admits an irreducible $g$-Hermitian--Einstein metric. If $E$ is a differentiable vector bundle and  $\bar
\delta$ is a holomorphic structure in $E$ then  $\bar \delta$ is called
 $g$-{\it stable} \cite{LuTel}  if the holomorphic bundle $(E, \bar \delta)$
 has this property. Let $M^{st}_g(E)$ be the  moduli space for isomorphy classes of $g$-stable
 holomorphic structures on $E$ and $M^{HE}_g(E,h)$ be the moduli space for
 gauge equivalence classes of irreducible $h$-unitary $g$-Hermitian--Einstein connections in~$E$.
Many mathematicians  (S.~Donaldson, S.~Kobayashi,
N.~Hitchin, F.~Bogomolov, N.~Buchdahl, M.~Itoh and H.~Nakajima, 
J.~Li, K.~Uhlenbeck and S.-T.~Yau and others) built up the theory in various aspects; its central result can be formulated as follows \cite{LuTel} (the Kobayashi--Hitchin correspondence):\\
{\it There exists a real-analytical isomorphism} $M^{HE}_g(E,h) \cong
M^{st}_g(E)$.

 If $X$ is a projective manifold and the Hodge metric  $g$ is associated
 to the ample invertible sheaf~$L$ which induces the immersion of $X$
 into the projective space, then \cite{LuTel} the definition of
$g$-stability coincides with the definition of $L$-slope-stability
of a vector bundle~$E$.

In the case of rank two vector bundles on a surface J. Li proved
\cite{Li93} that the Donaldson--Uhlenbeck compactification for the moduli
space  (of gauge equivalence classes) of anti-self-dual connections
admits  a complex structure  providing it with a reduced
projective scheme structure. The Gieseker--Maruyama
compactification for the moduli scheme  for stable vector bundles in this case 
has a morphism to the compactified scheme of anti-self-dual connections.

Usually \cite{Mar1,Mar} moduli schemes for stable vector bundles on an algebraic variety
 are nonprojective (and noncompact), and
for application of algebraic geometrical techniques it is useful to
include the scheme (or variety) of moduli for vector bundles into
some appropriate projective scheme as its open subscheme. This
problem is called a problem of compactification of
a moduli space. There arises a natural question of looking for
compactificat\-ions of moduli for vector bundles and of moduli for
connections which admit expanding of the Kobayashi--Hitchin
correspondence to both compactifications in whole.

The classical Gieseker--Maruyama moduli space for coherent sheaves
is not suitable for this purpose, since it includes classes of co\-herent sheaves  which are not locally free. Besides it the following compactifications
(related to the Yang--Mills field theory) for the moduli scheme of vector bundles and for the moduli space of connections are known: the Donaldson--Uhlenbeck compactification \cite{Donaldson89} 
(involving so-called ideal connections), the algebraic geometrical functorial approach to its
construction given by V.~Baranovsky \cite{Bar}, the algebraic geometrical
version for framed sheaves done by U.~Bruzzo, D.~Markushevich and
A.~Tikhomirov in \cite{BruMarTikh}, and the Taubes--Uhlenbeck--Feehan compactification \cite{Feehan}. Also there is an algebraic geometrical bubble-tree compactification announced for
vector bundles on  a surface and constructed for rank 2 vector
bundles by D.~Markushevich, A.~Tikhomirov and G.~Trautmann in 2012 \cite{MTT}. A different approach is developed by the author of the 
present paper for the 2-dimensional case (see \cite{Tim10} and references
therein). Here we describe a construction of its
multidimensional analog.

Compactifications which do not involve non-bundles can be of use for
extend\-ing the Kobayashi--Hitchin cor\-respond\-ence to a compact case.
Since the Kobayashi--Hitchin correspondence holds not only for
dimension two, we extend the ``locally free'' compactification to the case of
arbitrary dimension. The Kobayashi--Hitchin correspondence operates
with the coefficient field $\C$. We  work with an arbitr\-ary
algebraically closed field~$k$ of zero characteristic, since the
isomorphism of functors is not subject to the special properties of
$\C$.

Gieseker--Maruyama moduli schemes for sheaves on an arbitrary nonsingular algebraic variety  are projective. They can consist of several connected components, which can have non-equal dimensions and can carry nonreduced scheme structures. Also some components can contain no locally free sheaves. These phenomena (reducibility, nonreducedness, non-equidimensionality and presence of non-bundle components) occur for some combinations of numerical invariants even in the  case of dimension two.

Let $S$ be a nonsingular projective algebraic variety over an algebraic\-ally closed field $k=\overline k$ of zero characteristic, $\OO_S$ its structure sheaf, $E$ a coherent torsion-free $\OO_S$-module, $E^{\vee}:={{\mathcal H}om}_{{\mathcal O}_S}(E, {\mathcal O}_S)$ its dual ${\mathcal O}_S$-module.  A locally free sheaf and its corresponding vector bundle are canonically identified, and both terms are used as synonyms. Let  $L$ be a very ample invertible sheaf on $S$; it is fixed and used as a polarization. The symbol $\chi(\cdot)$ denotes the Euler--Poincar\'{e} characteristic of a coherent sheaf, $c_i(\cdot)$ is its $i$-th Chern class.

Recall the definition of a sheaf of 0-th Fitting ideals known from
commutat\-ive algebra. Let $X$ be a scheme and $F$ be an $\OO_X$-module with a
finite presentation $$\widehat F_1 \stackrel{\varphi}{\longrightarrow} \widehat F_0
\to F\to 0.$$ Without loss of generality we assume that  $\rank \widehat F_1
\ge \rank \widehat F_0$.
\begin{definition} {\it The sheaf of 0-th Fitting ideals} of the
$\OO_X$-module $F$ is defined as  $$\FFitt^0 F =\im
(\bigwedge^{\rank \widehat F_0} \widehat F_1 \otimes \bigwedge^{ \rank  \widehat F_0}
\widehat F_0^{\vee} \stackrel{\varphi'}{\longrightarrow}\OO_X),$$ where
$\varphi'$ is the morphism of $\OO_X$-modules induced by~$\varphi$.
\end{definition}

\begin{remark} We work with invertible sheaves $L$ and
$\widetilde L$, where the expression for $\widetilde L$ involves
an appropriate tensor power $L^m$ of $L$. If necessary, in further considerations we
replace $L$ by its tensor power $L^m$. Here $m$ is big enough for
$\widetilde L$ to be very ample. This power can be chosen uniform
and fixed, as shown in \cite{Tim4} for the case when the homological dimension of the sheaf $E$ equals one ($\hd E=1$) and in \cite{Tim12} for arbitrary homological dimension.  All Hilbert polynomials are
computed with respect to new $L$ and $\widetilde L$ respectively.
\end{remark}
%%%%%%%%%%%%%%%%%%%%%%%%%%%%%%%%%%%%%%%%%%%%%%%%%%%%%%%%%%%%%%%%%%%%%%%%%
%%%%%%%%%%%%%%%%%%%%%%%%%%%%%%%%%%%%%%%%%%%%%%%%%%%%%%%%%%%%%%%%%%%%%%%%%

First we recall the definition of an $S$-stable ($S$-semistable) pair for
the case of dimension two. The analogous notions and the whole construction for an arbitrary dimension are natural generalizations of this definition and are described below. An essential assumption is that $S$ is nonsingular.

\begin{definition}[\cite{Tim4}] \label{sst} An $S$-{\it stable}
(respectively, {\it $S$-semistable}) {\it pair} $((\widetilde
S,\widetilde L), \widetilde E)$ in the case when $\dim S=2$ is the following data:
\begin{itemize}
\item{$\widetilde S=\bigcup_{i\ge 0} \widetilde S_i$ is an
admissible scheme, $\sigma \colon \widetilde S \to S$ is a morphism which is
called {\it canonical}, $\sigma_i \colon \widetilde S_i \to S$, $i\ge 0$,  
are the restrictions of the morphism $\sigma$ to the components $\widetilde S_i$, $i\ge 0;$}
\item{$\widetilde E$ is a vector bundle on the scheme
$\widetilde S$;}
\item{$\widetilde L \in \Pic \widetilde S$ is a distinguished polarization;}
\end{itemize}
such that
\begin{itemize}
\item{$\chi (\widetilde E \otimes \widetilde
L^n)=rp(n),$ where the polynomial $p(n)$ and the rank $r$ of the sheaf
$\widetilde E$ are fixed;}
\item{the sheaf $\widetilde E$ on the scheme $\widetilde S$ is {\it
stable} (respectively, {\it semistable}) {\it in the sense of Gieseker},}

\item{on each of the additional components $\widetilde S_i,$ $i>0,$
the sheaf $\widetilde E_i:=\widetilde E|_{\widetilde S_i}$ is {\it
quasi-ideal,} i.e. it admits a description of the form
\begin{equation}\label{quasiideal}\widetilde E_i=\sigma_i^{\ast}
\ker q_0/\tors_i\end{equation} for some $q_0\in \bigsqcup_{l\le
c_2} {\rm Quot\,}^l \bigoplus^r {\mathcal O}_S$. }\end{itemize}
\end{definition}
The definition of the subsheaf $\tors_i$ for the case $\hd E=1$ is given below. A generalization of this definition for the case of arbitrary homological dimension is given in~\cite{Tim12}. 
The coefficients of the Hilbert polynomial $rp(n)$ of a sheaf $E$ depend on its Chern classes. In particular, $c_2(\cdot)$ is the $2^{\mathrm{nd}}$ Chern class of a sheaf with Hilbert polynomial equal to~$rp(n).$

Pairs $(( \widetilde S, \widetilde L), \widetilde E)$ such that
 $(\widetilde S, \widetilde
L)\cong (S,L)$ are called {\it $S$-pairs}.

In the series of articles of the author \cite{Tim0,Tim1,Tim2,Tim4,Tim3} a projective algebraic scheme~$\widetilde M$ is built up as a reduced moduli scheme of
$S$-semistable admissible pairs. In \cite{Tim6} a corresponding possibly nonreduced 
moduli scheme is constructed. The latter scheme was also denoted as $\widetilde M$.
From now on $\widetilde M$ is understood as a possibly nonreduced moduli scheme for semistable admissible pairs.

The scheme  $\widetilde M$ contains an open subscheme  $\widetilde
M_0$ which is isomorphic to the subscheme $M_0$ of
Gieseker-semistable vector bundles in the Gieseker--Maruyama
moduli scheme $\overline M$ of torsion-free semistable sheaves
whose Hilbert polynomial is equal to  $\chi(E \otimes L^n)=rp(n)$.

Let  $E$ be a semistable locally free sheaf. Then the
sheaf  $I={{\mathcal F}itt}^0 {{\mathcal E}xt}^1(E, {\mathcal
O}_S)$ is trivial and $\widetilde S \cong S$. In this case
$((\widetilde S, \widetilde L), \widetilde E) \cong ((S, L),E)$,
and we have a bijective correspondence $\widetilde M_0 \cong M_0$. Moreover, it is a scheme isomorphism.

Let $E$ be a semistable coherent sheaf which is not locally free. Then the
scheme  $\widetilde S$ contains a reduced irreducible component
$\widetilde S_0$ such that the morphism
$\sigma_0:=\sigma|_{\widetilde S_0}\colon \widetilde S_0 \to S$ is a
morphism of blowing up of the scheme $S$ in the sheaf of ideals
$I= {{\mathcal F}itt}^0 {{\mathcal E}xt}^1(E, {\mathcal O}_S).$
Formation of the sheaf~$I$ is a way to  characterize the singularities of the sheaf~$E$, i.e. its difference from a
locally free sheaf. Indeed, the quotient sheaf $\varkappa:=
E^{\vee \vee}/ E$ is Artinian of length not greater than
$c_2=c_2(E)$, and ${{\mathcal E}xt}^1(E, {\mathcal O}_S) \cong
{{\mathcal E}xt}^2(\varkappa, {\mathcal O}_S).$ Let $Z$ be the subscheme corresponding to the sheaf of ideals  ${{\mathcal F}itt}^0 {{\mathcal E}xt}^2(\varkappa, {\mathcal O}_S)$. In general, $Z$ is nonreduced and has  bounded length \cite{Tim6}. The subscheme $Z$ is supported at a finite set of points on the surface~$S$. As shown in \cite{Tim3}, in general, the other 
components $\widetilde S_i,$ $i>0$, of the scheme  $\widetilde S$ 
 can carry nonreduced scheme structures.

Each semistable coherent torsion-free sheaf $E$ corresponds to a
pair  $((\widetilde S, \widetilde L), \widetilde E)$ where
$(\widetilde S, \widetilde L)$ is defined as described above.

Now we turn to the construction of the subsheaf~$\tors$ in (\ref{quasiideal}). Let~$U$ be a Zariski-open subset in one of the components  $\widetilde S_i,$ $i\ge 0$, and $\sigma^{\ast}E|_{\widetilde S_i}(U)$ be the correspond\-ing group of sections. This group is an ${\mathcal O}_{\widetilde S_i}(U)$-module.
Let $\tors_i(U)$ be the submodule in $\sigma^{\ast}E|_{\widetilde S_i}(U)$ which consists of the sections $s\in \sigma^{\ast}E|_{\widetilde S_i}(U)$ such that $s$ is annihilated by some prime ideal of positive codimension in ${\mathcal O}_{\widetilde S_i}(U)$. The cor\-respondence
 $U \mapsto \tors_i(U)$ defines a subsheaf $\tors_i
\subset \sigma^{\ast}E|_{\widetilde S_i}.$ Note that the associated primes of positive codimensions which annihilate the sections $s\in \sigma^{\ast}E|_{\widetilde S_i}(U)$ correspond
to the subschemes supported in the preimage 
$\sigma^{-1}({\rm Supp\,} \varkappa)=\bigcup_{i>0}\widetilde S_i.$
Since the scheme  $\widetilde S=\bigcup_{i\ge
0}\widetilde S_i$ is connected by the construction \cite{Tim3}, the subsheaves $\tors_i,
i\ge 0,$ allow us to construct a subsheaf $\tors \subset
\sigma^{\ast}E$. The latter subsheaf is defined as follows. A
section  $s\in \sigma^{\ast}E|_{\widetilde S_i}(U)$ satisfies the
condition $s\in \tors|_{\widetilde S_i}(U)$ if and only if
\begin{itemize}
\item{there exists a section  $y\in {\mathcal O}_{\widetilde S_i}(U)$ such that
 $ys=0$,}
\item{at least one of the following two conditions is satisfied:
either  $y\in {\mathfrak p}$, where $\mathfrak p$ is a prime ideal
of positive codimension, or there exist a Zariski-open subset
$V\subset \widetilde S$ and a section  $s' \in \sigma^{\ast}E (V)$
such that  $V\supset U$, $s'|_U=s$, and $s'|_{V\cap \widetilde
S_0} \in$ $\tors (\sigma^{\ast}E|_{\widetilde S_0})(V\cap
\widetilde S_0)$. In the latter expression the torsion subsheaf
$\tors(\sigma^{\ast}E|_{\widetilde S_0})$ is understood in the usual
sense.}
\end{itemize}

The role of the subsheaf $\tors \subset \sigma^{\ast}E$ in our
construction is analogous to the role of a torsion subsheaf in the
case of a reduced and irreducible base scheme. Since no confusion
occurs, the symbol  $\tors$ is understood everywhere in the described
sense. The subsheaf $\tors$ is called the {\it torsion subsheaf \textup{(}in the modified sense\textup{)}}.

In \cite{Tim4} it is proven that the sheaves $\sigma^{\ast} E/\tors$
are locally free. The sheaf $\widetilde E$ included in the pair
$((\widetilde S, \widetilde L), \widetilde E)$ is defined by the
formula $\widetilde E = \sigma^{\ast}E/\tors$. In this
situation there is an isomorphism $H^0(\widetilde S, \widetilde
E \otimes \widetilde L) \cong H^0(S,E\otimes L).$

Also in \cite{Tim4} it was proven that the restriction of the
sheaf  $\widetilde E$ to each of the components $\widetilde S_i$,
$i>0,$ is given by the quasi-ideality relation (\ref{quasiideal}),
where $q_0 \colon  {\mathcal O}_S^{\oplus r}\twoheadrightarrow \varkappa$
is an epimorphism defined by the exact triple  $0\to E \to E^{\vee
\vee} \to \varkappa \to 0$. Here the sheaf
$E^{\vee \vee}$ is locally free.

Resolution of singularities of a semistable sheaf $E$ can be
globalized in a flat family by means of the construction developed
in various versions in  \cite{Tim1, Tim2, Tim4, Tim8}. Let $T$ be
a  scheme, ${\mathbb E}$ a sheaf of  ${\mathcal O}_{T\times
S}$-modules, ${\mathbb L}$ an invertible ${\mathcal O}_{T\times
S}$-sheaf very ample relative to $T$ and such that  ${\mathbb
L}|_{t\times S}=L$. Also let $\chi({\mathbb E} \otimes {\mathbb
L}^n|_{t\times S})=rp(n)$ for all closed points $t\in T$. In  \cite{Tim1, Tim2, Tim4, Tim8} the sheaves $\E$ and
$\L$ were assumed to be flat relative to $T$, and $T$ was assumed
to contain a nonempty open subset $T_0$ such that ${\mathbb
E}|_{T_0\times S}$ is a locally free ${\mathcal O}_{T_0 \times
S}$-module. Then the following objects were defined:
\begin{itemize}
\item{$\pi \colon  \widetilde \Sigma \to \widetilde T$ is a flat family of admissible
schemes with an invertible ${\mathcal O}_{\widetilde \Sigma}$-module
$\widetilde {\mathbb L}$ such that  $\widetilde {\mathbb
L}|_{t\times S}$ is a distinguished polarization of the scheme
$\pi^{-1}(t)$,}
\item{$\widetilde {\mathbb E}$ is a locally free
${\mathcal O}_{\widetilde \Sigma}$-module and $((\pi^{-1}(t),
\widetilde {\mathbb L}|_{\pi^{-1}(t)}),  \widetilde {\mathbb
E}|_{\pi^{-1}(t)})$ is an $S$-semi\-stable admiss\-ible pair.}
\end{itemize}
In this situation there is a blowup morphism $\Phi: \widetilde
\Sigma \to \widetilde T \times S$, and the scheme $\widetilde T$ is
birational to the initial base scheme $T$. In \cite{Tim8} the
procedure  is modified so that $\widetilde T
\cong T$.

The mechanism described was called in \cite{Tim4} the {\it standard resolution}.

In the present paper we admit that the open subset $T_0$ such that
$\E|_{T_0\times S}$ is locally free can be empty. 

\smallskip

In \cite{Tim12} the following result was proven.
\begin{theorem}[\cite{Tim12}]
Let $T$ be an algebraic scheme of finite type over~$k$ and $L$ be a (big enough) very ample invertible sheaf on a nonsingular projective algebraic variety~$S$. Set $\Sigma=T\times S$.
Let $\E$ be a $T$-flat coherent $\OO_\Sigma$-module of rank~$r$ and such that $\E|_{t\times S}$ is a torsion-free coherent sheaf and the fiberwise Hilbert polynomial $\chi(\E|_{t\times S} \otimes L^n)=rp(n)$ is independent of the choice of~$t\in T$. Then there are \begin{itemize} \item{a flat family $\pi \colon \widetilde \Sigma \to T$ with a  morphism $\sigma\!\!\!\sigma \colon \widetilde \Sigma \to \Sigma$ of $T$-schemes,}
\item{an invertible $\OO_{\widetilde \Sigma}$-sheaf $\widetilde \L$,}
\end{itemize}
such that \begin{itemize}\item{$\widetilde \E=\sigma\!\!\!\sigma^{\ast}\E/\tors$ is a locally free $\OO_{\widetilde \Sigma}$-module,} \item{$\chi(\widetilde \L^n|_{\pi^{-1}(t)})$ is uniform over $t\in T$,} \item{$\chi (\widetilde \E \otimes \widetilde \L^n|_{\pi^{-1}(t)})=rp(n)$.}
\end{itemize}
\end{theorem}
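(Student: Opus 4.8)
The plan is to run the standard resolution of \cite{Tim4,Tim8} on the total space $\Sigma=T\times S$ itself, with $\E$ playing the role of the sheaf to be resolved, and then to check $T$-flatness of the resulting family together with the two Euler-characteristic normalizations one fiber at a time. First I would isolate the \emph{absolute} ingredient: for a torsion-free $\OO_S$-module $F$ with $\hd F=d>0$, set $\mathcal I_F=\FFitt^{0}\EExt^{d}_{\OO_S}(F,\OO_S)$, let $\sigma\colon S'\to S$ be the blowup of $\mathcal I_F$ with the additional components over $\Supp\EExt^{d}$ prescribed by the quasi-ideality relation adjoined, and put $F'=\sigma^{\ast}F/\tors$ (torsion in the modified sense on the generally reducible scheme $S'$). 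One checks that $F'$ is torsion-free with $\hd F'\le d-1$ and that $H^{0}(S',F'\otimes\sigma^{\ast}L^{n})\cong H^{0}(S,F\otimes L^{n})$ for all $n\ge1$, $L$ being the uniform power of the Remark; iterating $d$ times produces an admissible scheme $\widetilde S$ together with a locally free $\widetilde F$. For $\dim S=2$ this collapses to the single blowup of \cite{Tim4}.

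Next I would apply exactly this recipe to $\Sigma=T\times S$ and $\E$, using the relative sheaves $\EExt^{d}_{\OO_\Sigma}(\E,\OO_\Sigma)$, thereby obtaining $\sigma\!\!\!\sigma\colon\widetilde\Sigma\to\Sigma$ and $\widetilde\E=\sigma\!\!\!\sigma^{\ast}\E/\tors$. The crucial point --- and the step I expect to be the main obstacle --- is that $\pi\colon\widetilde\Sigma\to T$ must be flat although $T$ may be non-reduced and $\E$ locally free over no open subscheme of $\Sigma$. I would establish this by showing that the whole tower of blowups commutes with restriction to each fiber $t\times S$: Fitting ideals commute with arbitrary base change, and $L$ has been chosen large enough that the sheaves $\EExt^{i}_{\OO_\Sigma}(\E,\OO_\Sigma)\otimes L$ satisfy cohomology and base change --- their fiberwise Hilbert polynomials being forced to be uniform by that of $\E$ --- so each ideal occurring in the construction restricts on $t\times S$ to the ideal of the absolute case, and the corresponding Rees algebras remain $T$-flat. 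Hence $\pi^{-1}(t)$ is precisely the admissible scheme resolving $\E|_{t\times S}$; in particular $\pi$ is flat, and $\widetilde\E$ is locally free, since local freeness of a $T$-flat coherent sheaf may be tested on the fibers, where it holds by the absolute case.

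Finally I would build $\widetilde\L$ out of $\sigma\!\!\!\sigma^{\ast}L$ and the total exceptional divisor of the tower, with the multiplicities dictated by the construction, enlarging the power if needed so that $\widetilde\L$ is $\pi$-very ample and restricts on $\pi^{-1}(t)$ to a distinguished polarization of the admissible scheme there. Since this exceptional data has the same combinatorial shape on every fiber, $\chi(\widetilde\L^{n}|_{\pi^{-1}(t)})$ is independent of $t$. For the last equality, work on one fiber $\widetilde S=\pi^{-1}(t)$: the isomorphism $H^{0}(\widetilde S,\widetilde E\otimes\widetilde L^{n})\cong H^{0}(S,E\otimes L^{n})$ from the absolute case holds for all $n\ge1$, so for $n\gg0$, where the higher cohomology of both sides vanishes, $\chi(\widetilde E\otimes\widetilde L^{n})=\chi(E\otimes L^{n})=rp(n)$, and two polynomials that agree for all large $n$ coincide; equivalently one can run the explicit Riemann--Roch computation in which $\sigma_{*}\widetilde E=E^{\vee\vee}$, $R^{>0}\sigma_{*}\widetilde E=0$, and the exceptional corrections absorb $\length\varkappa$. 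The genuinely delicate parts are the flatness of $\pi$ over an arbitrary, possibly non-reduced $T$, and the bookkeeping of the iterated exceptional divisors; the remainder is an assembly of the absolute resolution with standard base-change and flatness arguments.
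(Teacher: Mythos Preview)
Your proposal has a genuine gap at precisely the point you flag as ``the main obstacle'', namely the $T$-flatness of $\pi$, and the gap is structural rather than technical.

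The paper's construction does \emph{not} blow up $\Sigma$ directly in a Fitting ideal. It explicitly warns that doing so fails: since $\codim_\Sigma\Sing\E\ge 2$, the fibers of $p\circ\sigma\!\!\!\sigma$ over $T$ are not obliged to be equidimensional, and hence the resulting family is not flat. Your argument ``Fitting ideals commute with arbitrary base change, so the Rees algebras remain $T$-flat'' does not rescue this: commutation of the ideal with base change does not imply that the blowup commutes with base change (the fiber of the blowup over $t$ is in general strictly larger than the blowup of the fiber), and even when it does you get the wrong, non-equidimensional scheme on each fiber. The additional components you mention as ``adjoined by the quasi-ideality relation'' are exactly what is missing, and you give no mechanism for producing them in families. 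The paper supplies that mechanism by the $\A^1$-extension trick: one passes to $\Sigma'=\Sigma\times\A^1$, replaces each ideal $\I_i\subset\OO_{\Sigma_{i-1}}$ by $\I_i[t]+(t)$, blows up $\Sigma'$, and restricts to the zero fiber. This is what grows the additional components and forces equidimensionality; flatness is then Proposition~1 of \cite{Tim12}.

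There is also a difference in which ideals are blown up. You propose $\FFitt^{0}\EExt^{d}(\E,\OO_\Sigma)$ for the top $\EExt$, iterated. The paper instead chooses a finite locally free resolution $\widehat E_\bullet\to\E$, cuts it into short exact triples with intermediate kernels $W_i$, and at the $i$-th step blows up in $\FFitt^{0}\EExt^{1}$ of the (pulled-back) $W_{\ell-i}$; each such step is a homological-dimension-one resolution in the sense of \cite{Tim4,Tim8}, and independence of the chosen resolution is proved separately. Your $\EExt^{d}$-scheme is not obviously equivalent to this, and the base-change behaviour of $\EExt^{i}_{\OO_\Sigma}(\E,\OO_\Sigma)$ is much more delicate than that of Fitting ideals --- the sentence ``$\EExt^{i}\otimes L$ satisfy cohomology and base change'' conflates two different base-change problems and does not give what you need. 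Finally, the claim $\sigma_{*}\widetilde E=E^{\vee\vee}$ in your last paragraph holds in dimension two but not in general.
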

The resolution described in \cite{Tim12}
takes any coherent torsion-free $\OO_S$-sheaf $E$ on a polarized projective scheme $S$ to an {\it admissible pair} of the form $((\widetilde S, \widetilde L),\widetilde E)$, which is a generalization of the one in the dimension two (resp., homological dimension 1) case. In particular, $\widetilde S$ is an {\it admissible scheme}; its structure depends on the structure of the sheaf $E$ under the resolution. In the present paper we  extend the notion of stability (semistability)  for such pairs and  provide a functorial approach to their moduli scheme.

Using the results of \cite{Tim12}, in the present paper we prove the following theorem.
\begin{theorem} \label{thfunc} 
\textup{(}i\textup{)} There is a natural transformation
$\underline \kappa \colon {\mathfrak f}^{GM} \to {\mathfrak f}$ of the
Gieseker--Maruyama moduli functor ${\mathfrak f}^{GM}$ for a nonsingular projective algebraic variety $S$ to the moduli functor ${\mathfrak f}$ of
admissible semistable pairs with the same rank and Hilbert polynomial.

\textup{(}ii\textup{)} There is a natural transformation $\underline \tau \colon
{\mathfrak f} \to {\mathfrak f}^{GM}$ of the moduli functor of
admissible semistable pairs  to the Gieseker--Maruyama moduli
functor for sheaves with the same rank and Hilbert polynomial.

\textup{(}iii\textup{)} The natural transformations $\underline \kappa$ and $\underline
\tau$ satisfy the relation $$\underline
\tau \circ \underline \kappa = \id_{{\mathfrak f}^{GM}}.$$

\textup{(}iv\textup{)}  The  nonreduced moduli scheme $\widetilde M$ for ${\mathfrak f}$
is isomorphic to the  nonreduced Gieseker--Maruyama scheme
$\overline M$ for sheaves with the same rank and Hilbert polynomial.
\end{theorem}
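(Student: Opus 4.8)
\emph{Part \textup{(i)}: the transformation $\underline\kappa$.} Let $T$ be an algebraic scheme of finite type over $k$ and let a class in ${\mathfrak f}^{GM}(T)$ be represented by a $T$-flat family $\E$ of Gieseker-semistable torsion-free sheaves on $\Sigma=T\times S$ with fiberwise Hilbert polynomial $rp(n)$. Applying the theorem of \cite{Tim12} recalled above produces $\pi\colon\widetilde\Sigma\to T$, the canonical morphism $\sigma\!\!\!\sigma\colon\widetilde\Sigma\to\Sigma$, an invertible sheaf $\widetilde\L$, and the locally free sheaf $\widetilde\E=\sigma\!\!\!\sigma^{\ast}\E/\tors$; I would set $\underline\kappa_{T}[\E]$ to be the class of $((\widetilde\Sigma,\widetilde\L),\widetilde\E)$. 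Three things must be checked: that this is a family of \emph{semistable} admissible pairs --- fiberwise admissibility of $\pi^{-1}(t)$ and the quasi-ideality relation (\ref{quasiideal}) come from \cite{Tim12}, while fiberwise Gieseker-semistability of $\widetilde\E|_{\pi^{-1}(t)}$ is the comparison of (semi)stability under resolution carried out earlier in the paper; that $\underline\kappa$ respects the defining equivalence relations, which is immediate since $\sigma\!\!\!\sigma^{\ast}(\E\otimes p^{\ast}N)/\tors\cong(\sigma\!\!\!\sigma^{\ast}\E/\tors)\otimes\pi^{\ast}N$ for a line bundle $N$ on $T$, with $p\colon\Sigma\to T$ the projection; and naturality in $T$, i.e.\ compatibility of the whole construction with base change $T'\to T$. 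The last point is the delicate one: it holds because the resolution of \cite{Tim12} is assembled from blow-ups along sheaves of Fitting ideals functorially attached to $\E$, and the formation of these ideals, the blow-ups, and the passage to $\sigma\!\!\!\sigma^{\ast}\E/\tors$ are all compatible with the base changes occurring here --- $T$-flatness of $\E$ and of $\widetilde\E$ being what makes this go through.

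\emph{Part \textup{(ii)}: the transformation $\underline\tau$.} Given a class in ${\mathfrak f}(T)$ represented by $((\widetilde\Sigma,\widetilde\L),\widetilde\E)$ with $\pi\colon\widetilde\Sigma\to T$ and canonical morphism $\sigma\!\!\!\sigma\colon\widetilde\Sigma\to T\times S$, I would define $\underline\tau_{T}$ by sending this class to the class of $\E:=\sigma\!\!\!\sigma_{\ast}\widetilde\E$. The work here is to prove that $\E$ is a $T$-flat family of Gieseker-semistable torsion-free sheaves with fiberwise Hilbert polynomial $rp(n)$: fiberwise one checks that $\sigma_{\ast}\widetilde E$ is torsion-free, that $R^{i}\sigma_{\ast}(\widetilde E\otimes\sigma^{\ast}L^{n})=0$ for $i>0$ and $n\gg 0$ (so that $\sigma\!\!\!\sigma_{\ast}$ commutes with base change, giving $T$-flatness, and $\chi(\sigma_{\ast}\widetilde E\otimes L^{n})=rp(n)$), and that $\sigma_{\ast}\widetilde E$ is Gieseker-semistable --- again by the (semi)stability comparison, read in the opposite direction; this is consistent with the isomorphism $H^{0}(\widetilde S,\widetilde E\otimes\widetilde L)\cong H^{0}(S,E\otimes L)$ already supplied by the construction. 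Compatibility with the equivalence relations follows from the projection formula $\sigma\!\!\!\sigma_{\ast}(\widetilde\E\otimes\pi^{\ast}N)\cong(\sigma\!\!\!\sigma_{\ast}\widetilde\E)\otimes p^{\ast}N$.

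\emph{Parts \textup{(iii)} and \textup{(iv)}, and the main obstacle.} The composite $\underline\tau_{T}\circ\underline\kappa_{T}$ sends $[\E]$ to $[\sigma\!\!\!\sigma_{\ast}(\sigma\!\!\!\sigma^{\ast}\E/\tors)]$, so $\underline\tau\circ\underline\kappa=\id_{{\mathfrak f}^{GM}}$ reduces to the identity $\sigma\!\!\!\sigma_{\ast}(\sigma\!\!\!\sigma^{\ast}\E/\tors)\cong\E$ in families; fiberwise this is the statement that the direct image along $\sigma$ of the standard resolution $\widetilde E=\sigma^{\ast}E/\tors$ returns $E$ --- one of the established properties of the standard resolution (\cite{Tim4,Tim12}) --- and it extends to families by the base-change facts above. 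For (iv), corepresentability of ${\mathfrak f}^{GM}$ by $\overline M$ and of ${\mathfrak f}$ by $\widetilde M$ turns $\underline\kappa$ and $\underline\tau$ into morphisms $\bar\kappa\colon\overline M\to\widetilde M$ and $\bar\tau\colon\widetilde M\to\overline M$, and (iii) forces $\bar\tau\circ\bar\kappa=\id_{\overline M}$. To obtain $\bar\kappa\circ\bar\tau=\id_{\widetilde M}$ I would establish the complementary relation $\underline\kappa\circ\underline\tau=\id_{{\mathfrak f}}$: for an admissible semistable pair $((\widetilde S,\widetilde L),\widetilde E)$ the sheaf $E=\sigma_{\ast}\widetilde E$ is semistable and torsion-free, and its standard resolution reproduces the pair --- the component $\widetilde S_{0}$, with $\sigma_{0}\colon\widetilde S_{0}\to S$ the blow-up along $\FFitt^{0}\EExt^{1}(E,\OO_{S})$, is recovered because $E^{\vee\vee}/E$ depends only on $E$, while the additional components $\widetilde S_{i}$, $i>0$, and the bundles on them are pinned down precisely by the quasi-ideality relation (\ref{quasiideal}) --- after which the universal property of corepresentation yields $\bar\kappa\circ\bar\tau=\id_{\widetilde M}$, so that $\bar\kappa,\bar\tau$ are mutually inverse isomorphisms and $\widetilde M\cong\overline M$. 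The principal obstacle is precisely this reconstruction step: showing that every semistable admissible pair --- with its possibly nonreduced structure and all of its additional components, and in families over an arbitrary base --- arises, and arises uniquely, as the standard resolution of the semistable sheaf $\sigma\!\!\!\sigma_{\ast}\widetilde\E$, which is where the quasi-ideality condition (\ref{quasiideal}) and the full structure theory of admissible schemes and of the modified torsion subsheaf from \cite{Tim12} must be brought to bear, carried out compatibly with base change.
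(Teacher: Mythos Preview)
Your plan for parts (i) and (iii) is broadly in line with the paper's. For part (ii), however, you define $\underline\tau$ by $\E:=\sigma\!\!\!\sigma_{\ast}\widetilde\E$; the paper instead sets $\EE_m:=\sigma\!\!\!\sigma_{\ast}(\widetilde\E\otimes\widetilde\L^m)$ for $m\gg 0$, proves $\EE_{m+n}=\EE_m\otimes\L^n$ (Propositions~\ref{fltnss} and the one following it), and only then defines $\E:=\EE_m\otimes\L^{-m}$. The twist by $\widetilde\L^m$ is what makes $T$-flatness and the Hilbert polynomial computation go through via the identity $p_{\ast}(\EE_m\otimes\L^n)=\pi_{\ast}(\widetilde\E\otimes\widetilde\L^{m+n})$; your bare pushforward would require a separate argument, and the base-change claim you invoke (that $\sigma\!\!\!\sigma_{\ast}$ commutes with restriction to fibers) is exactly what the paper establishes only for the twisted sheaves $\EE_m$ with $m\gg 0$ (diagram~(\ref{beta})). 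Also note that in the paper's setup the morphism $\sigma\!\!\!\sigma\colon\widetilde\Sigma\to T\times S$ is not part of the data of a family in $\mathfrak F_T$ --- only birational $S$-triviality is assumed --- so the paper first constructs $\phi\colon\widetilde\Sigma\to T\times S$ from the polarization before anything can be pushed forward.

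The genuine gap is in your strategy for (iv). You propose to prove $\underline\kappa\circ\underline\tau=\id_{\mathfrak f}$ and deduce that $\bar\kappa,\bar\tau$ are mutually inverse. But this identity is \emph{false}, and the paper says so explicitly: in higher dimension the class of admissible schemes is deliberately widened (Section~\ref{s3}: ``We assume no specification for $I_i$''), so a general admissible pair $((\widetilde S,\widetilde L),\widetilde E)$ is \emph{not} the standard resolution of $\sigma_{\ast}\widetilde E$ --- the Fitting-ideal blowups prescribed by the standard resolution will in general produce a \emph{different} admissible scheme $\widetilde S'$. Your ``reconstruction step'', which you correctly flag as the principal obstacle, therefore cannot succeed as stated. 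The paper's route is to introduce M-equivalence (Section~\ref{s4}): for the diamond $\widetilde\Sigma\stackrel{\widehat\sigma\!\!\!\sigma}{\longleftarrow}\widetilde\Sigma_\Delta\stackrel{\widehat\sigma\!\!\!\sigma'}{\longrightarrow}\widetilde\Sigma'$ one shows $\widehat\sigma\!\!\!\sigma^{\ast}\widetilde\E\cong\widehat\sigma\!\!\!\sigma'^{\ast}\widetilde\E'$, so that $\underline\kappa\circ\underline\tau$ lands in the same M-equivalence class even though it is not the identity on $\mathfrak f$. The moduli scheme isomorphism then follows because M-equivalent families have the same image under the corepresenting morphism $\psi\colon\mathfrak f\to\underline{\widetilde M}$, which yields $\bar\kappa\circ\bar\tau=\id_{\widetilde M}$ at the level of schemes without $\underline\kappa\circ\underline\tau$ being the identity at the level of functors.
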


In Section \ref{s2} we give an outline of the  multidimensional version of the standard resolution developed in \cite{Tim12}. It leads to the functorial morphism $\underline \kappa \colon {\mathfrak f}^{GM} \to {\mathfrak f}$ and proves the part ({\it \!i}) of  Theorem \ref{thfunc}.

Section 3 is devoted to the notion of (semi)stability of an admissible pair and to its relation to the (semi)stability of a coherent sheaf $E$ in the sense of Gieseker when the admissible pair arises from the standard resolution of $E$. The class of semistable admissible pairs, as we define it, in dimension greater than two is much wider than the one in dimension two. 

In Section 4 we introduce and examine the notion of  M-equivalence (monoidal equivalence) for admissible pairs and its relation to the S-equivalence of coherent sheaves. 

Section 5 contains an explicit decsription of the functor  ${\mathfrak f}$ of moduli of admissible pairs and of the classical Gieseker--Maruyama functor ${\mathfrak f}^{GM}$ of moduli of coherent sheaves.

Section 6 contains a construction of the natural transformation $\underline \tau\colon {\mathfrak f} \to  {\mathfrak f}^{GM}$  taking the functor $\mathfrak f$ of moduli for admissible pairs (\ref{funcmy}), (\ref{class}) to the functor ${\mathfrak f}^{GM}$ of moduli of coherent sheaves in the sense of Gieseker and Maruyama (\ref{funcGM}), (\ref{famGM}). 
The rank~$r$ and the polynomial~$p(n)$ are fixed and equal for both moduli functors. We give a description of the
transformation of a family of semistable admissible pairs
 $((\pi\colon \widetilde \Sigma \to T, \widetilde \L),
\widetilde \E)$ with a (possibly, nonreduced) base scheme~$T$ to a
family $\E$ of coherent torsion-free semistable sheaves with the
same base~$T$. The transformation provides a morphism $\underline \tau$ of the
functor ${\mathfrak f}$ of admissible semistable pairs
 to the Gieseker--Maruyama functor $\mathfrak f^{GM}$. This proves the part ({\it \!ii}) of Theorem \ref{thfunc}.

In Section \ref{s5} we investigate the relations among the both functors by examining their composites. We prove that $\underline
\tau \circ \underline \kappa = \id_{{\mathfrak f}^{GM}}$ but  $\underline \kappa \circ \underline
\tau $ takes any family of pairs to some other family of pairs but these two families are M-equivalent (monoidally equivalent). This shows that the moduli schemes $\widetilde M$ and  $\overline M$ are isomorphic although the morphisms of functors
$\underline \kappa \colon {\mathfrak f}^{GM} \to {\mathfrak f}$ and
$\underline \tau \colon {\mathfrak f} \to  {\mathfrak f}^{GM}$ we
constructed are not mutually inverse. 
This completes the proof of Theorem \ref{thfunc}.

{\bf Acknowledgements.} 
This work was carried out within the framework of a development program for the Regional Scientific and Educational Mathematical Center of the P.G. Demidov Yaroslavl State University with financial support from the Ministry of Science and Higher Education of the Russian Federation (Agreement No. 075-02-2020-1514/1 additional to the agreement on provision of subsidies from the federal budget No. 075-02-2020-1514).

\smallskip
%%%%%%%%%%%%%%%%%%%%%%%%%%%%%%%%%%%%%%%%%%%%%%%%%%%%%%%%%%
\section{Outline of the standard resolution and the GM-to-Pairs transformation}\label{s2}

%%%%%%%%%%%%%%%%%%%%%%%%%%%%%%%%%%%%%%%%%%%%%%%%%%%%%%%%%%%%%%%%%
%%%%%%%%%%% STANDARD RESOLUTION FOR ARBITRARY DIMENSION %%%%%%%%%
%%%%%%%%%%%%%%%%%%%%%%%%%%%%%%%%%%%%%%%%%%%%%%%%%%%%%%%%%%%%%%%%%

In the previous paper \cite{Tim12} the transformation of a family
$(T, \L, \E)$ of semistable coherent torsion-free sheaves to the
family  $((T, \pi\colon  \widetilde \Sigma \to T, \widetilde
\L), \widetilde \E)$ of admissible semistable pairs was developed for an $N$-dimensional
ground variety $S$. Here we give an outline of the construction from \cite{Tim12}. 

The initial family of sheaves $\E$ can contain no locally free
sheaves and the codimension of the singular locus $\Sing \E$ in
$\Sigma=T\times S$ is more than or equals  two. Hence, if the blowing
up $\sigma\!\!\!\sigma\colon \widetilde \Sigma \to \Sigma$ in the sheaf of
ideals $\FFitt^0 \EExt^1(\E, \OO_{\Sigma})$ (or in any other sheaf of
ideals of some subscheme  supported in the singular locus of
$\E$) is considered, then the fibers of the composite $p\circ
\sigma\!\!\!\sigma$ are not obliged to be equidimensional. Such a blowing up 
cannot produce a family of admissible schemes. Equidimensionality is needed because
admissible schemes must be included in flat families.
%%%%%%%%%%%%%%%%%%%%%%%%%%%%%%%%%%%%%%%%%%%%%%%%%%%%%%%%%%%%%%%%%
%%%%%%%%%%%%%%%%%%%%%%%%%%%%%%%%%%%%%%%%%%%%%%%%%%%%%%%%%%%%%%%%%

To overcome this difficulty we perform a trick (base extending) done in \cite{Tim10}. Consider the product $\Sigma'=\Sigma \times \A^1$ and fix the closed immersion $i_0\colon \Sigma \hookrightarrow \Sigma'$ which identifies $\Sigma$ with the zero fiber~$\Sigma \times 0.$ Now let $Z\subset \Sigma$  be a subscheme defined by a sheaf of ideals $\I=\FFitt^0 \EExt^1(\E, \OO_{\Sigma})$. Then consider the
sheaf of ideals $\I':=\ker (\OO_{\Sigma'} \twoheadrightarrow i_{0\ast} \OO_Z)$ and the blowing up morphism $\sigma\!\!\!\sigma'\colon\widehat \Sigma' \to \Sigma'$ defined by the sheaf $\I'$. Denote the
projection onto the product of factors $\Sigma' \to T\times \A^1=: T'$ by $p'$ and the composite $p' \circ \sigma\!\!\!\sigma'$ by $\widehat \pi'$. We are interested in the induced morphism $\pi\colon
\widetilde \Sigma \to T$ where we have introduced the notation 
$\widetilde \Sigma:=i_0(\Sigma) \times_{\Sigma'} \widehat \Sigma'$. Under the identification $\Sigma \cong i_0(\Sigma)$ we denote by $\sigma\!\!\!\sigma$ the induced morphism $\widetilde \Sigma \to
\Sigma.$ Set $\L':= \L \boxtimes \OO_{\A^1}$. Obviously, there
exists $m\gg 0$ such that the invertible sheaf $\widehat \L':=
\sigma\!\!\!\sigma'^{\ast}{\L'}^m \otimes
({\sigma\!\!\!\sigma'}^{-1} \I') \cdot \OO_{\widehat \Sigma'}$ is
ample relatively to $\widehat \pi'$. For brevity of notations we fix
this $m$ and replace $L$ by its $m$-th tensor power throughout
the further text. Denote $\widetilde \L:=\widehat \L'|_{\widetilde
\Sigma}.$
The following proposition is formulated and proven analogously to Proposition 1 in \cite{Tim10} but has  wider range of applicability.
%%%%%%%%%%%%%%%%%%%%%%%%%%%%%%%%%%%%%%%%%%%%%%%%%%%%%%%%%%
%%%%%%%%%%%%%% FLAT-TO-FLAT RESOLUTION %%%%%%%%%%%%%%%%%%%
%%%%%%%%%%%%%%%%%%%%%%%%%%%%%%%%%%%%%%%%%%%%%%%%%%%%%%%%%%
\begin{proposition}[\cite{Tim12}, Proposition 1] The morphism $\pi\colon\widetilde \Sigma \to
T$ is flat and its fiberwise Hilbert polynomial computed with respect
to $\widetilde \L$, i.e. $\chi (\widetilde \L^n|_{\pi^{-1}(t)})$,
is uniform over $t\in T$.
\end{proposition}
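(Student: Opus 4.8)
The plan is to reduce the statement to a local computation over $T$ and then invoke a standard Hilbert-polynomial constancy criterion. First I would work \'etale-locally (or Zariski-locally) on the base $T$, so that $\I = \FFitt^0 \EExt^1(\E,\OO_\Sigma)$ becomes a concretely presented sheaf of ideals and the product $\Sigma' = \Sigma\times\A^1 = T\times S\times \A^1$ with its two projections $p'\colon \Sigma'\to T' = T\times\A^1$ and the composite $\widehat\pi'=p'\circ\sigma\!\!\!\sigma'\colon \widehat\Sigma'\to T'$ becomes manageable. The key observation is that $\widetilde\Sigma = i_0(\Sigma)\times_{\Sigma'}\widehat\Sigma'$ is precisely the fiber $\widehat\pi'^{-1}(T\times 0)$, so $\pi\colon\widetilde\Sigma\to T$ is the restriction of $\widehat\pi'$ over the Cartier divisor $T\times 0\subset T'$. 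Thus flatness of $\pi$ will follow from flatness of $\widehat\pi'$ together with the fact that $T\times 0$ is cut out by a nonzerodivisor in $\OO_{T'}$, provided we check that the divisor $\widehat\Sigma'\times_{T'}(T\times 0)$ contains no embedded components coming from the exceptional locus --- equivalently, that the pullback of the equation of $T\times 0$ remains a nonzerodivisor on $\widehat\Sigma'$.

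Next I would establish flatness of $\widehat\pi'$. Here the point is that $\widehat\Sigma'\to\Sigma'$ is the blowup of $\Sigma'$ along the ideal $\I' = \ker(\OO_{\Sigma'}\twoheadrightarrow i_{0\ast}\OO_Z)$, which is the ideal of the subscheme $Z\subset \Sigma\times 0\subset\Sigma'$; since $Z$ is supported on $\Sing\E$ and has relative dimension over $T$ strictly less than $\dim S$, the effect of blowing up is to insert, over each point of $T$, exceptional divisors of the correct dimension so that $\widehat\pi'^{-1}(t,\lambda)$ is equidimensional of dimension $N$ for $\lambda=0$ and isomorphic to $S$ for $\lambda\neq 0$. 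Flatness over the regular (hence one-dimensional, locally) factor $\A^1$ follows because $\widehat\Sigma'$ is a blowup of the integral-in-the-$\A^1$-direction scheme $\Sigma'$ and has no components mapping to a point of $\A^1$; flatness over $T$ then follows by the same argument applied fiberwise, or more cleanly by the local criterion of flatness applied to the composite, using that $\widehat\Sigma'$ is $\A^1$-flat and that over the generic point $\lambda\neq 0$ the morphism is just $p\colon\Sigma\to T$, which is flat. This is essentially the content of Proposition 1 of \cite{Tim10}, and I would follow that argument verbatim, pointing out that nothing in it used $N=2$ except notation.

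For the constancy of the fiberwise Hilbert polynomial $\chi(\widetilde\L^n|_{\pi^{-1}(t)})$, once $\pi$ is flat I would argue as follows. By construction $\widetilde\L = \widehat\L'|_{\widetilde\Sigma}$ where $\widehat\L' = \sigma\!\!\!\sigma'^{\ast}{\L'}^m\otimes(\sigma\!\!\!\sigma'^{-1}\I')\cdot\OO_{\widehat\Sigma'}$ is $\widehat\pi'$-ample, and $\L'=\L\boxtimes\OO_{\A^1}$ restricts on each slice $t\times S\times\{\lambda\}$ to $L$ independently of $\lambda$. Flatness of $\pi$ together with $\widetilde\L$ being $\pi$-ample (it is the restriction of a $\widehat\pi'$-ample sheaf) guarantees that $n\mapsto\chi(\widetilde\L^n|_{\pi^{-1}(t)})$ is a polynomial whose coefficients are given by cohomology-and-base-change applied to $R^i\pi_*\widetilde\L^n$; by the semicontinuity theorem and the vanishing of higher direct images for $n\gg 0$, together with connectedness considerations on $T$ (or simply the fact that $\chi$ is locally constant on a flat family, hence constant on connected $T$, and the general case reduces to this), the polynomial is independent of $t$. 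More directly: since $\widetilde\Sigma\to T$ is flat and projective and $\widetilde\L$ is $\pi$-ample, $\chi(\widetilde\L^n|_{\pi^{-1}(t)}) = \sum_i(-1)^i\dim_{k(t)} H^i(\pi^{-1}(t),\widetilde\L^n|_{\pi^{-1}(t)})$ equals the Hilbert polynomial of the fiber, which is locally constant in $t$ for a flat family --- this is the standard fact that Hilbert polynomials are constant in flat families.

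The main obstacle I expect is verifying flatness of $\pi$ at the points lying over $Z\subset\Sigma\times 0$, i.e. ensuring that the strict transform and the exceptional divisor fit together over $T\times 0$ without the zero-fiber $\widetilde\Sigma$ acquiring embedded or lower-dimensional components; this is exactly why the base-extension trick with $\A^1$ is used (blowing up the ``thickened'' ideal $\I'$ rather than $\I$ itself forces the exceptional fibers over $T\times 0$ to have the right dimension). Concretely the delicate point is to show that the equation $\lambda$ of $T\times 0$ pulls back to a nonzerodivisor on $\OO_{\widehat\Sigma'}$, equivalently that no associated prime of $\OO_{\widehat\Sigma'}$ lies in the zero fiber; this holds because $\widehat\Sigma'$ is a blowup of $\Sigma' = \Sigma\times\A^1$ along an ideal whose zero locus $Z$ has codimension $\geq 2$ in $\Sigma'$ (being supported on $\Sing\E\times 0$, which already has codimension $\geq 2$ in $\Sigma$, hence codimension $\geq 3$ in $\Sigma'$... but in any case $\geq 2$, and $\Sigma'$ is irreducible with $\Sigma\times 0$ not a component), so the exceptional divisor dominates $Z$ and does not fill up the zero fiber. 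Once this nonzerodivisor claim is in hand, flatness of $\pi$ is immediate from flatness of $\widehat\pi'$ and the rest is the routine Hilbert-polynomial argument above. I would therefore organize the write-up as: (1) identify $\widetilde\Sigma$ as a Cartier divisor fiber of $\widehat\pi'$; (2) prove $\widehat\pi'$ is flat, imitating \cite{Tim10}; (3) deduce $\pi$ is flat via the nonzerodivisor observation; (4) conclude constancy of the fiberwise Hilbert polynomial from flatness plus relative ampleness of $\widetilde\L$.
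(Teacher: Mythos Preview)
The paper does not give its own proof of this proposition: it is quoted from \cite{Tim12}, and the only comment is that it is ``formulated and proven analogously to Proposition~1 in \cite{Tim10}.'' Your outline matches this: you correctly identify $\widetilde\Sigma$ as the fiber of $\widehat\pi'$ over the Cartier divisor $T\times 0\subset T'$, reduce to flatness of $\widehat\pi'$, explicitly invoke \cite{Tim10} for that step, and then deduce Hilbert-polynomial constancy from flatness plus relative ampleness of $\widetilde\L$. So your approach and the paper's (cited) approach coincide.

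One point in your sketch deserves tightening. Your justification that the $\A^1$-coordinate pulls back to a nonzerodivisor on $\widehat\Sigma'$ is a dimension count for the exceptional locus; that controls minimal primes of $\OO_{\widehat\Sigma'}$ but not embedded ones, and since $T$ is allowed to be nonreduced here, embedded primes are a genuine concern. A cleaner route is to work directly with the Rees algebra: locally $\OO_{\Sigma'}=\OO_\Sigma[u]$ and $\I'=\I\cdot\OO_\Sigma[u]+(u)$, so $\bigoplus_{s\ge 0}(\I')^s$ embeds in $\OO_\Sigma[u][T]$, on which $u$ is visibly a nonzerodivisor; this passes to $\Proj$ and gives flatness of $\widehat\Sigma'\to\A^1$ with no reducedness hypothesis on $T$. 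You should also be explicit that step~(2) of your plan needs more than flatness over $\A^1$: flatness of $\pi$ over $T$ (not just over the $\A^1$-direction) is the actual goal, and in \cite{Tim10,Tim12} this is extracted from the explicit description $\widetilde\Sigma=\Proj\bigoplus_{s\ge 0}(\I[u]+(u))^s/(u)^{s+1}$ rather than from a general base-change statement. Since the paper itself defers to those references, this is not a divergence from the paper but a place where your write-up would need to fill in what the citation covers.
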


We denote $\sigma\!\!\!\sigma:=\sigma\!\!\!\sigma'|_{\widetilde
\Sigma}\colon\widetilde \Sigma \to \Sigma.$

Let $T$ be an arbitrary  (possibly nonreduced) $k$-scheme of finite type. We assume that its reduction  $T_{\red}$ is irreducible. If $S$ is a surface and  $\E$ is a family of coherent torsion-free sheaves with a base $T$ on $S$,  then the homological dimension of $\E$ as an $\OO_{T\times S}$-module is not greater then one. In our case $S$ has a bigger dimension and we have to work with a locally free resolution of a higher length. Start with a shortest locally free resolution of the family of sheaves $\E$ and cut the corresponding exact $\OO_{\Sigma}$-sequence of length $\ell$
$$ 0\to \widehat E_\ell \to \widehat E_{\ell-1}\to \dots \to \widehat E_0 \to \E \to 0
$$ with locally free $\OO_\Sigma$-modules $\widehat E_\ell, \dots \widehat E_0$ into triples:
\begin{equation}
0\to W_i \to \widehat E_{i-1} \to W_{i-1} \to 0. \label{triple}
\end{equation}
Here $W_\ell=\widehat E_\ell$, $W_1=\ker(\widehat E_0 \to \E)$ and $W_i=\ker(\widehat E_{i-1} \to \widehat E_{i-2})=\coker (\widehat E_{i+1}\to \widehat E_i)$ for $i=2,\dots, \ell-1.$ Also we keep in mind that all the sheaves $W_i$ except $W_\ell$ are not locally free (otherwise the resolution must be shorter). Since $S$ is assumed to be regular, then $\E$ possesses a locally free resolution of the length not greater than $\dim S$.

We resolve the singularities of the sheaf $\E$ by the consequent blowups in the sheaves of ideals $\I'_i=\I_i[t_i]+(t_i),$ $i=1, \dots, \ell$, for the sheaves of ideals
\begin{eqnarray} \I_1=\FFitt^0 \EExt ^1(W_{\ell-1}, \OO_{\Sigma}), &&\I_2=\FFitt^0 \EExt^1(\sigma\!\!\!\sigma_1^\ast
W_{\ell-2},\OO_{\Sigma_1}), \nonumber \\ \I_3=\FFitt^0
\EExt^1(\sigma\!\!\!\sigma_2^\ast\sigma\!\!\!\sigma_1^\ast W_{\ell-3},\OO_{\Sigma_2}), &...,& \I_{\ell}=\FFitt^0 \EExt^0(\sigma\!\!\!\sigma_{\ell-1}^\ast \dots
\sigma\!\!\!\sigma_1^\ast W_0, \OO_{\Sigma_\ell}).\nonumber
\end{eqnarray} Each morphism $\sigma\!\!\! \sigma_i \colon 
\Sigma_i \to \Sigma_{i-1}$ is induced by the sheaf of ideals $\I_i,$
$i=1,\dots, \ell$, and these morphisms form a sequence
\begin{equation}\label{seqmor}
\widetilde \Sigma:=\Sigma_{\ell}
\stackrel{\sigma\!\!\!\sigma_\ell}{\longrightarrow}\dots
\stackrel{\sigma\!\!\!\sigma_1}{\longrightarrow} \Sigma_0:=\Sigma.
\end{equation}
Each segment (\ref{triple}) is resolved by the morphism $\sigma\!\!\!\sigma_{\ell-i+1}$ at
the $(\ell-i+1)^{\rm st}$ step of the process.

%%%%%%%%%%%%%%%%%%%%%%%%%%%%%%%%%%%%%%%%%%%%%%%%%%%%%%%%%%%%%%%%%%%%%%%%%%%%%%
%%%%  NESESSARITY TO DECOMPOSE INTO BLOWING UP AND GROWING UP A COMPONENT %%%%
%%%%%%%%%%%%%%%%%%%%%%%%%%%%%%%%%%%%%%%%%%%%%%%%%%%%%%%%%%%%%%%%%%%%%%%%%%%%%%

\begin{remark}\cite{Tim12} If the scheme $ X=\Sigma_1$ has an irreducible
reduction, then we conclude directly
that  $\hd \sigma \!\!\! \sigma_1 ^{\ast}\EExt^1(W_{\ell-1},
\OO_\Sigma)=1.$ If $\Sigma_1$ has a reducible reduction, then there is
a natural decomposition $\Sigma_1 = \Sigma_1^0 \cup \D_1'$. Here
$\Sigma_1^0 \cong \Proj \bigoplus _{s\ge 0} \I_1^s$ is isomorphic to the scheme
obtained by blowing up $\Sigma$ in the sheaf of ideals $\I_1$ and
$\D_1'$ is an exceptional divisor of the blowing up morphism $\sigma
\!\!\! \sigma_1'\colon\Sigma'_1 \to \Sigma \times \A^1.$ Their
scheme-theoretic intersection equals the exceptional divisor $\D_1$
of the blowing up morphism $\sigma \!\!\! \sigma_1^0\colon\Sigma_1^0 \to
\Sigma$:
$$\Sigma_1^0 \cap \D_1'=\D_1.$$ One comes to the
decomposition of the morphism $\sigma\!\!\! \sigma_1$:
$$
\Sigma_1 \stackrel{\delta \!\!\delta_1}{\longrightarrow} \Sigma_1^0
\stackrel{\sigma \!\!\! \sigma_1^0}{\longrightarrow} \Sigma,
$$
where $\delta \!\!\delta_1$ acts identically on $ \Sigma_1^0$ and
its action on $\D_1'$
$$
\delta \!\!\delta_1 |_{\D_1'}\colon\D_1' \to  \Sigma_1^0
$$
factors through the exceptional divisor $\D_1=\Proj \bigoplus_{s\ge
0}\I_1^s/\I_1^{s+1}$ of the morph\-ism $\sigma \!\!\! \sigma_1^0$
and is defined by the structure of a $\bigoplus_{s\ge 0}
\I_1^s/\I_1^{s+1}$-algebra on the graded ring $\bigoplus_{s\ge 0}
\I_1'^s/\I_1'^{s+1}.$
\end{remark}
\begin{convention}
For uniformity of the notation we use the decomposition $\sigma\!\!\!\sigma_i = \sigma\!\!\!\sigma^0_i \circ \delta\!\!\delta_i$ for all $i=1,\dots,\ell$ with no reference if $\delta\!\!\!\,\delta_i$ is either non-identity or identity morphism.
\end{convention}

%%%%%%%%%%%%%%%%%%%%%%%%%%%%%%%%%%%%%%%%%%%%%%%%%%%%%%%%%%%%%%%%%%%%
The composites $
\delta\!\!\delta_{i} \circ \sigma\!\!\!\sigma_{i+1}^0 $ are  ``interchangeable'' in the sense of the commutative diagram
\begin{eqnarray}\label{interch}\xymatrix{
\Sigma_{i+1}^0 \ar[d]_{\delta\!\!\delta_i^{-1}}
\ar[r]^{\sigma\!\!\!\sigma_{i+1}^0}&
\Sigma_i \ar[d]^{\delta\!\!\delta_i}\\
\Sigma_{i+1}^{-1}\ar[r]^{\sigma\!\!\!\sigma_{i+1}^{-1}}& \Sigma_i^0
}\end{eqnarray} 
\begin{remark} We use double indexing with lower and upper indices for
schemes and morph\-isms. For example, $-1$ in the notation
$\delta\!\!\delta_i^{-1}$ or $\sigma\!\!\!\sigma_{i+1}^{-1}$ is just
an index but neither a sign of the inverse of the map nor a notation for
the inverse image of a sheaf.
\end{remark}

The resolution of singularities of the sheaf $\E$ is done by the
sequence of morphisms $\sigma\!\!\!\sigma_i=\delta\!\!\delta_i \circ
\sigma\!\!\!\sigma_i^0$, $i=1,\dots,\ell.$ Their composite can be
decomposed into the following diagram by iterating the square
(\ref{interch}):
\begin{equation}\xymatrix{
\Sigma_\ell \ar[d]_{\delta\!\!\delta^0_\ell}\ar[dr]^{\sigma\!\!\!\sigma_\ell}\\
\Sigma_\ell^0 \ar@{.}[d]\ar[r]^{\sigma\!\!\!\sigma_\ell^0}
&\Sigma_{\ell-1}
\ar@{.}[d] \ar@{.}[rd]\\
\Sigma_\ell^{-i+2} \ar[d]_{\delta\!\!\delta_{i+1}^{-i+1}}
\ar[r]^{\sigma\!\!\!\sigma_{\ell}^{-i+2}} &\Sigma_{\ell-1}^{-i+3}
\ar[d]_{\delta\!\!\delta_{i+1}^{-i+2}} \ar@{.}[r] &\Sigma_{i+1}
\ar[d]_{\delta\!\!\delta_{i+1}^0}
\ar[rd]^{\sigma\!\!\!\sigma_{i+1}}\\
\Sigma_\ell^{-i+1}
\ar[d]_{\delta\!\!\delta_i^{-i}}\ar[r]^{\sigma\!\!\!\sigma_\ell^{-i+1}}
&\Sigma_{\ell-1}^{-i+1} \ar[d]_{\delta\!\!\delta_i^{-i+1}}\ar@{.}[r]
&\Sigma_{i+1}^0 \ar[d]_{\delta\!\!\delta_i^{-1}}
\ar[r]^{\sigma\!\!\!\sigma_{i+1}^0}&
\Sigma_i \ar[d]_{\delta\!\!\delta_i^0} \ar[rd]^{\sigma\!\!\!\sigma_i}\\
\Sigma_\ell^{-i}
\ar@{.}[d]\ar[r]^{\sigma\!\!\!\sigma_\ell^{-i}}&\Sigma_{\ell-1}^{-i+1}
\ar@{.}[d]\ar@{.}[r]&\Sigma_{i+1}^{-1}
\ar[r]^{\sigma\!\!\!\sigma_{i+1}^{-1}}\ar@{.}[d] &\Sigma_i^0
\ar[r]^{\sigma\!\!\!\sigma_i^0}\ar@{.}[d]
&\Sigma_{i-1} \ar@{.}[d] \ar@{.}[rd]\\
\Sigma_{\ell}^{ -\ell+2} \ar[d]_{\delta\!\!\delta_1^{-\ell+1}}
\ar[r]^{\sigma\!\!\!\sigma_\ell^{-\ell+2}}&\Sigma_{\ell-1}^{-\ell+3}
\ar[d]_{\delta\!\!\delta_1^{-\ell+2}}\ar@{.}[r]&\Sigma_{i+1}^{-i+1}
\ar[d]_{\delta\!\!\delta_1^{-i}}\ar[r]^{\sigma\!\!\!\sigma_{i+1}^{-i+1}}&\Sigma_i^{-i+2}
\ar[d]_{\delta\!\!\delta_1^{-i+1}}
\ar[r]^{\sigma\!\!\!\sigma_i^{-i+2}}&\Sigma_{i-1}^{-i+3}
\ar[d]_{\delta\!\!\delta_1^{-i+2}} \ar@{.}[r]
&\Sigma_1 \ar[d]_{\delta\!\!\delta_1^0} \ar[rd]^{\sigma\!\!\!\sigma_1}\\
\Sigma_{\ell}^{-\ell+1}
\ar[r]^{\sigma\!\!\!\sigma_\ell^{-\ell+1}}&\Sigma_{\ell-1}^{-\ell+2}
\ar@{.}[r]&\Sigma_{i+1}^{-i}
\ar[r]^{\sigma\!\!\!\sigma_{i+1}^{-i}}&\Sigma_i^{-i+1}
\ar[r]^{\sigma\!\!\!\sigma_i^{-i+1}}&\Sigma_{i-1}^{-i+2}
\ar@{.}[r]&\Sigma_1^0
\ar[r]^{\sigma\!\!\!\sigma_1^0}&\Sigma}\label{bigdiag}
\end{equation}
Each square of this diagram has a view \begin{equation*}
\xymatrix{\Sigma_i^{-j} \ar[d]_{\delta\!\!\delta_{i-1}^{-j-1}}
\ar[r]^{\sigma\!\!\!\sigma_i^{-j}}& \Sigma_{i-1}^{-j+1}
\ar[d]^{\delta \!\!\delta_{i-1}^{-j}}\\
\Sigma_i^{-j-1} \ar[r]_{\sigma
\!\!\!\sigma_i^{-j-1}}&\Sigma_{i-1}^{-j}}
\end{equation*}
for $i=0, \dots, \ell,$ $j=-1, \dots,i-1$, where $\Sigma_0:=\Sigma$,
$\Sigma_i^1:=\Sigma_i$, $\delta\!\!\delta_i^0:=\delta\!\!\delta_i$.

 In this diagram the
bottom horizontal row is a composite of consequent blowups and the
left vertical column $\delta\!\!\delta_1^{-\ell+1} \circ \dots \circ
\delta\!\!\delta_\ell^0$ provides flatness of the scheme
$\Sigma_\ell=\widetilde \Sigma$ over its base $T$ by consequent
growing ups additional components of several levels and
$(\delta\!\!\delta_1^{-\ell+1} \circ \dots \circ
\delta\!\!\delta_\ell^0)^\ast$ produces just an inverse image of
a locally free sheaf. Each square of the diagram is an analog of
(\ref{interch}).
%%%%%%%%%%%%%%%%%%%%%%%%%%%%%%%%%%%%%%%%%%%%%%%%%%%%%%%%%%%%%%%%%%%%%%%%%%%

In arbitrary homological dimension we act
inductively and do interchanging in the inductive step.

Now we pass to the next segment
\begin{equation} \label{seg}
0\to W_{\ell-1} \to \widehat E_{\ell -2} \to W_{\ell -2 }\to 0
\end{equation}
and to its ``inverse image'' under $\sigma \!\!\! \sigma_1 ^0$:
\begin{equation} \label{seg'}0\to W'_{\ell -1} \to \sigma \!\!\! \sigma_1 ^{0\ast} \widehat E_{\ell
-2} \to \sigma \!\!\! \sigma_1 ^{0\ast} W_{\ell -2} \to 0.
\end{equation}
The ``inverse image'' of (\ref{seg}) under $\sigma \!\!\! \sigma_1 =\sigma \!\!\!
\sigma_1 ^0 \circ \delta \!\! \delta_1 ^0 $ equals the inverse image of (\ref{seg'}) under $\delta \!\! \delta_1 ^0$ which is exact because the kernel $W'_{\ell-1}$ of (\ref{seg'}) is
locally free. In this and other
consequent  segments the cokernel sheaf contains a torsion (in the  modified
sense) but finally the resolution eliminates a torsion.

Next steps are similar to each other and involve inverse images of
consequent segments
\begin{eqnarray}
0\to W'_{\ell-2} \to \sigma \!\!\! \sigma_2 ^{0\ast}\sigma \!\!\!
\sigma_1 ^\ast \widehat E_{\ell-3} \to \sigma \!\!\! \sigma_2 ^{0\ast}\sigma
\!\!\! \sigma_1 ^\ast W_{\ell -3}
\to 0,\nonumber \\
.\quad .\quad .\quad .\quad .\quad .\quad .\quad .\quad .\quad
.\quad .\quad .\quad .\quad .\quad .\quad .\quad .\nonumber \\
0\to W'_{\ell-i} \to \sigma \!\!\! \sigma_i ^{0\ast}
\sigma\!\!\!\sigma_{i-1}^\ast\dots \sigma \!\!\! \sigma_1 ^\ast
\widehat E_{\ell-i-1} \to \sigma \!\!\! \sigma_i ^{0\ast}
\sigma\!\!\!\sigma_{i-1}^\ast \dots \sigma \!\!\! \sigma_1 ^\ast
W_{\ell -i-1}
\to 0,\label{v}\\
.\quad .\quad .\quad .\quad .\quad .\quad .\quad .\quad .\quad
.\quad .\quad .\quad .\quad .\quad .\quad .\quad .\nonumber \\
0\to W'_1 \to \sigma \!\!\! \sigma_{\ell-1} ^\ast\dots \sigma \!\!\!
\sigma_1 ^\ast \widehat E_0 \to \sigma \!\!\! \sigma_{\ell-1} ^\ast\dots
\sigma \!\!\! \sigma_1 ^\ast W_0 \to 0, \nonumber
\end{eqnarray}
where $W_0=\E$ and the kernel sheaf $W'_{\ell -i}$ in the next
triple is a locally free $\OO_{\Sigma_{i-1}}$-module which is produced
by the standard resolution of $\sigma \!\!\! \sigma_i ^\ast\dots \sigma
\!\!\! \sigma_1 ^\ast W_{\ell -i}$ in the previous triple.

As it is proven in \cite{Tim12}, the composite morphism $$\sigma\!\!\!\sigma_{[i]}:=\sigma
\!\!\! \sigma_1 \circ \dots \circ \sigma \!\!\! \sigma_i$$
can be rewritten as $$
\sigma\!\!\!\sigma_{[i]}=\sigma\!\!\!\sigma_{[i-1]}\circ \sigma\!\!\!\sigma_i=
\sigma\!\!\!\sigma_{[i-1]}\circ \sigma\!\!\!\sigma_i^0 \circ \delta\!\!\delta_i^0=
\sigma\!\!\!\sigma_{[i]}^0 \circ \delta\!\!\delta_{[i]}^0,
$$
where we have introduced the notations $$%\sigma\!\!\!\sigma_{[i]}:=\sigma
%\!\!\! \sigma_1 \circ \dots \circ \sigma \!\!\! \sigma_i,\quad
\sigma\!\!\!\sigma_{[i]}^0:=\sigma\!\!\!\sigma_1^0 \circ \sigma\!\!\!\sigma_2^{-1} \circ \dots \circ
\sigma\!\!\!\sigma_i^{-i+1},\quad
\delta\!\!\delta_{[i]}^0:=\delta\!\!\delta_1^{-i+1}\circ \delta\!\!\delta_2^{-i+2}\circ  \dots \circ
\delta\!\!\delta_i^0,\quad \delta\!\!\delta_{[i-1]}^{-1}:=\delta\!\!\delta_1^{-i+1}\circ \delta\!\!\delta_{2}^{-i+2} \circ \dots \circ \delta\!\!\delta_{i-1}^{-1}.$$ 

The resolution leads to the locally free $\OO_{\Sigma_\ell^{-\ell+1}}$-sheaf $\widehat \E:= \sigma\!\!\!\sigma_{[\ell]}^{0\ast} \E /\tors$
and the $\OO_{\Sigma_\ell}$-sheaf $\widetilde \E:=\delta\!\!\delta_{[\ell]}^{0\ast}\widehat \E.$

We will use the notation $\widetilde \Sigma:=\Sigma_\ell$ analogous to the one used for a family of admissible schemes in previous papers.

\begin{remark} Since  $\widetilde \E$ is locally free as
a $\OO_{\widetilde
\Sigma}$-module and $\OO_{\widetilde \Sigma}$ is $\OO_T$-flat, then
$\widetilde \E$ is also flat over $T$.
\end{remark}

The transformation of families we constructed has a form $$(T,
\L, \E) \mapsto (\pi\colon\widetilde \Sigma \to T, \widetilde \L,
\widetilde \E)$$ and is defined by the commutative diagram
\begin{equation*}\xymatrix{ T \ar@{=}[d]\ar@{|->}[r]& \{(T, \L,
\E)\} \ar[d]\\
 T \ar@{|->}[r]& \{(\pi\colon\widetilde \Sigma \to
 T, \widetilde \L, \widetilde \E)\} }
\end{equation*}
 The right vertical arrow is the map of sets.
Their elements are families of objects to be parameterized. The map
is determined by the procedure of the resolution as it was developed in
\cite{Tim12}.
\begin{remark} As we conclude below, the
resolution as it is constructed now  defines a morphism of
functors $\underline \kappa \colon {\mathfrak f}^{GM}\to {\mathfrak f}$, where $\mathfrak f$ is the functor of moduli for admissible pairs (to be described below) and ${\mathfrak f}^{GM}$ the functor of moduli for semistable torsion free sheaves in the setting of Gieseker and Maruyama.
The morphism of the functors  $\underline \kappa \colon {\mathfrak f}^{GM} \to
\mathfrak f$ is defined by the class of commutative diagrams
\begin{equation*}\label{morfun}\xymatrix{T \ar@{|->}[rd] \ar@{|->}[r]&
\mathfrak F^{GM}_T/\sim \ar[d]^{\underline \kappa (T)}\\
& \mathfrak F_T/\sim}
\end{equation*}
where $T\in {\mathcal O}b (Schemes)_k$ and $\underline
\kappa(T)\colon(\mathfrak F^{GM}_T/\sim) \to ( \mathfrak F_T/\sim)$ is
a morphism in the category of sets (mapping).
\end{remark}
\begin{remark}
The procedure of the resolution involves a choice of a locally free resolution of the sheaf under the resolution. But it is proven \cite{Tim12} that the sequence of morphisms (\ref{seqmor}) does not depend on this choice.
\end{remark}

The structure of fibers of the morphism  $\pi\colon\widetilde \Sigma \to T$ is produced directly from  
the composite of the morphisms of $T$-schemes $\pi_i\colon\Sigma_i \to T$, $i=0,\dots,\ell$: 
$$\xymatrix{\widetilde \Sigma \ar@{=}[r]&\!\Sigma_\ell \ar[rd]_{\delta\!\!\delta_\ell^0} \ar[rr]^{\sigma\!\!\!\sigma_\ell}&&\Sigma_{\ell-1} \ar[rd]_{\delta\!\!\delta_{\ell-1}^0}\ar[rr]^{\sigma\!\!\!\sigma_{\ell-1}}&&\dots \ar[rr]^{\sigma\!\!\!\sigma_2}\ar[rd]&&\Sigma_1 \ar[dr]_{\delta\!\!\delta_1^0}\ar[rr]^{\sigma\!\!\!\sigma_1}&&\Sigma_0 \ar@{=}[r]&\Sigma\\
&&\Sigma_{\ell}^0\ar[ur]_{\sigma\!\!\!\sigma_\ell^0}&&\Sigma_{\ell-1}^0\ar[ur]&&\Sigma_2^0 \ar[ur]_{\sigma\!\!\!\sigma_2^0}&& \Sigma_1^0 \ar[ur]_{\sigma\!\!\!\sigma_1^0}}
$$
When restricted to a fiber over any fixed closed point $t\in T$ (or, equivalently, one can set $T$ to be a reduced point) this chain gives rise to the chain of morphisms among fibers $S_i=\pi_i^{-1}(t)$:
$$\xymatrix{\widetilde S \ar@{=}[r]&S_\ell \ar[rd]_{\delta_\ell^0} \ar[rr]^{\sigma_\ell}&&S_{\ell-1} \ar[rd]_{\delta_{\ell-1}^0}\ar[rr]^{\sigma_{\ell-1}}&&\dots \ar[rr]^{\sigma_2}\ar[rd]&&S_1 \ar[dr]_{\delta_1^0}\ar[rr]^{\sigma_1}&&S_0 \ar@{=}[r]&S\\
&&S_{\ell}^0\ar[ur]_{\sigma_\ell^0}&&S_{\ell-1}^0\ar[ur]&&S_2^0 \ar[ur]_{\sigma_2^0}&& S_1^0 \ar[ur]_{\sigma_1^0}}
$$
Each of $\sigma_i^0$ is a blowing up morphism and each of $\delta_i^0$ contracts the additional component onto the exceptional divisor of $\sigma_i^0$. When moving against the arrows we can say that $\sigma_i^0$ blows up the scheme $S_{i-1}$ and $\delta_i^0$ grows the additional component.
Since these two types of morphisms alternate, next blowup is applied to the scheme which can consist of several connected components.

\begin{convention}
We use the notations for a single fiber  which are completely parallel to the notations for families in (\ref{bigdiag}). The fiberwise version for (\ref{bigdiag}) can be obtained when one replaces  $\Sigma_i$ by $S_i$ with all the indices preserved for the schemes.  The double letters $\sigma\!\!\!\sigma $ and $\delta\!\!\delta$ are replaced by the respective usual single letters $\sigma$ and $\delta$ with all the indices preserved for morphisms. The reader should keep in mind that each $\delta$ projects its source scheme to the component of this scheme.
 \end{convention}

We start with the initial non-singular variety $S$. When passing to $S_1^0$ and after that to $S_1$ we see the scheme $S_1$ consisting of a principal component $S_1^0$ and an addit\-ional component $S_1^{add}$. The principal component $S_1^0=(\sigma_1^0)^{-1}S$ is an algebraic variety which is obtained by blowing up $S$. The addit\-ional component $S_1^{add}$ can carry a non-reduced scheme structure and can have a reducible reduction. In the previous papers this closed subscheme appeared as a union of addit\-ional components of an admissible scheme. The addit\-ional component $S_1^{add}$ can consist of several connected components.

Passing to $S_2^0$ leads to the transformation of both $S_1^0$ and $S_1^{add}$. We come to the algebraic variety $S_2^0=(\sigma_2^{-1})^{-1}S_1^0=(\sigma_2^{-1})^{-1}(\sigma_1^0)^{-1}S$ obtained by blowing up the principal component $S_1^0$ of $S_1$ and to the scheme $(\sigma_2^0)^{-1}S_1^{add}$. Passing to $S_2$ against $\delta_2^0$ grows up the additional component $S_2^{add}$, and we can write $S_2=S_2^0 \cup (\sigma_2^0)^{-1}S_1^{add} \cup S_2^{add}$. 

Analogously, on $\ell^{\mathrm{th}}$ step we have the following scheme 
$$
\widetilde S:=S_\ell=(\sigma_1^0\circ \dots \circ \sigma_\ell^{-\ell+1})^{-1}S \cup 
(\sigma_2^0\circ \dots \circ \sigma_\ell^{-\ell+2})^{-1}S_1^{add}\cup \dots \cup (\sigma_\ell^0)^{-1}S_{\ell-1}^{add}\cup S_\ell^{add}.$$

Depending on the structure of the initial  $\OO_S$-sheaf $E$, several morphisms $\sigma_i$ can turn to be identities and hence the actual length of the chain 
$S_1, \dots, S_\ell$ can vary from 0 (for the case when $E$ is locally free) to the maximal value equal to $\ell.$

To measure the numerical invariants of the objects we obtained in the standard resolution we need to fix an appropriate ample invertible sheaf $\widetilde L$ on each $\widetilde S$. The sheaf of analogous role was called a {\it distinguished polarization} in previous papers, where $\hd$-one case was developed. This class of invertible sheaves provides, in particular, fiberwise uniform Hilbert polynomials in flat families of admissible schemes. Strictly speaking, if $\widetilde \L$ is invertible $\OO_{\widetilde \Sigma}$-sheaf which is very ample relatively to the base $T$, then for any closed point $t\in T$ and for any integer $n\gg 0$ Hilbert polynomial of the fiber  $\pi^{-1}(t)$ computed as $\chi (\widetilde \L^n|_{\pi^{-1}(t)})$ is independent of the choice of $t\in T$. 

Let $\Sigma$ carries an invertible sheaf $\L$ which is very ample relatively to $T$. In $\hd$-one case there is a one-step resolution by $T$-morphism $\sigma\!\!\!\sigma\colon\widetilde \Sigma \to \Sigma$ associated with the sheaf of ideals $\I \subset \OO_\Sigma$. In this case is was shown that a distinguished polarization can be chosen as $\widetilde \L=\sigma\!\!\!\sigma^\ast \L^m \otimes \sigma\!\!\!\sigma^{-1}\I \cdot \OO_{\widetilde \Sigma}$, when $m$ is sufficiently big to make $\widetilde \L$ to be ample relatively to $T$.

In the case of a bigger homological dimension $\ell$ this step of constructing a family of polarizations on the $T$-scheme $\widetilde \Sigma$ is iterated $\ell$ times until one comes to 
\begin{equation} \label{polarizfam}
    \widetilde \L:=\L_\ell=[\sigma\!\!\!\sigma_\ell^\ast \dots  [\sigma\!\!\!\sigma_2^\ast[\sigma\!\!\!\sigma_1^\ast \L^{m_1} \otimes (\sigma\!\!\!\sigma_1)^{-1} \I_1 \cdot \OO_{\Sigma_1}]^{m_2} \otimes (\sigma\!\!\!\sigma_2)^{-1}\I_2 \cdot \OO_{\Sigma_2}]^{m_3}\dots ]^{m_\ell}\otimes (\sigma\!\!\!\sigma_\ell)^{-1} \I_\ell \cdot \OO_{\Sigma_\ell}.
\end{equation}

The corresponding $T$-scheme $\Sigma_i$ has fiberwise constant Hilbert polynomial with respect to $\L_i$ after $i$-th step of the resolution.

A distinguished polarization on a single admissible scheme $\widetilde S$ has a view 
\begin{equation}\label{polarizfib}
\widetilde L:=L_\ell=[\sigma_\ell^\ast \dots  [\sigma_2^\ast[\sigma_1^\ast L^{m_1} \otimes (\sigma_1)^{-1} I_1 \cdot \OO_{S_1}]^{m_2} \otimes (\sigma_2)^{-1}I_2 \cdot \OO_{S_2}]^{m_3}\dots ]^{m_\ell}\otimes (\sigma_\ell)^{-1} I_\ell \cdot \OO_{S_\ell}
\end{equation}
The distinguished polarization $\widetilde L$ is assumed to be fixed for each admissible scheme $\widetilde S$. If $\widetilde S=S$, then $\widetilde L=L$ and polarizations $\widetilde L$ for all possible $\widetilde S$ are chosen in such a way that for all admissible schemes $(\widetilde S, \widetilde L)$ their Hilbert polynomials 
are uniform:
$$
\chi(\widetilde L^n)=\chi(L^n), \quad n\gg 0.
$$
Now we redenote $\L^{m_1\cdot m_2 \cdot \dots \cdot m_\ell}$ as $\L$ and $L^{m_1\cdot m_2 \cdot \dots \cdot m_\ell}$ as $L$. From now we work with these new polarizations. Also we come to the following shorthand notations for (\ref{polarizfam}) and (\ref{polarizfib}) respectively: 
\begin{eqnarray}
\widetilde \L=\sigma\!\!\!\sigma^\ast \L \otimes \E \mathrm{xc};\\ \E \mathrm{xc}:=[\sigma\!\!\!\sigma_\ell^\ast\dots [\sigma\!\!\!\sigma_2^\ast[ (\sigma\!\!\!\sigma_1)^{-1} \I_1 \cdot \OO_{\Sigma_1}]^{m_2} \otimes (\sigma\!\!\!\sigma_2)^{-1}\I_2 \cdot \OO_{\Sigma_2}]^{m_3}\dots ]^{m_\ell}\otimes (\sigma\!\!\!\sigma_\ell)^{-1} \I_\ell \cdot \OO_{\Sigma_\ell};
\end{eqnarray}
\begin{eqnarray}
\widetilde L=\sigma^\ast L \otimes \mathrm{Exc}_{S_\ell};\label{polshort}\\\mathrm{Exc}_{S_\ell}:=[\sigma_\ell^\ast \dots [\sigma_2^\ast[ (\sigma_1)^{-1} I_1 \cdot \OO_{S_1}]^{m_2} \otimes (\sigma_2)^{-1}I_2 \cdot \OO_{S_2}]^{m_3}\dots ]^{m_\ell}\otimes (\sigma_\ell)^{-1} I_\ell \cdot \OO_{S_\ell}. \label{polshortexc}
\end{eqnarray}
%%%%%%%%%%%%%%%%%%%%%%%%%%%%%%%%%%%%%%%%%%%%%%%%%%%%%%%%%%%%%%%%%
\section{(Semi)stability} \label{s3}

\begin{definition}
An $\OO_{\widetilde S}$-sheaf $\widetilde E$ {\it is obtained from $E$ by the standard resolution} if for some locally free $\OO_S$-resolution  $\widehat E_\bullet \to E\to 0$
with intermediate quotients $W_i$, $i=0,\dots,\ell,$ $W_0=E,$ $W_\ell =\widehat E_\ell$ and for a sequence of morphisms $\widetilde S=S_\ell \stackrel{\sigma_\ell}{\longrightarrow} S_{\ell-1}\stackrel{\sigma_{\ell-1}}{\longrightarrow} \dots \stackrel{\sigma_2}{\longrightarrow} S_1 \stackrel{\sigma_1}{\longrightarrow} S$ there is an isomorphism
$\widetilde E=\sigma_\ell^\ast \dots \sigma_1^\ast E/\tors$. Morphisms $\sigma_i,$ $i=1,\dots, \ell,$ are specified as in Sec. \ref{s2}: $S_i= \Proj \bigoplus_{s\ge 0} (I_i[t]+(t))^s/(t)^{s+1}$ and $\sigma_i \colon S_i \to S_{i-1}$ is a structure morphism.
\end{definition}
%%%%%%%%%%%%%%%%%%%%%%%%%%%%%%%%%%%%%%%%%%%%%%%%%%%%%%%%%%%%%%%%%
\begin{definition}[\cite{Gies}]
The $\OO_{\widetilde S}$-sheaf $\widetilde E$ is {\it Gieseker-stable} (resp., {\it Gieseker-semistable}) with respect to the polarization $\widetilde L$ if for any subsheaf $\widetilde F \subset \widetilde E$ and  for any proper subsheaf
 $\widetilde F \subset \widetilde E$
for $m\gg 0$ one has
\begin{eqnarray*}
\frac{h^0(\widetilde F\otimes \widetilde L^{m})}{\rank \widetilde F}&<&
\frac{h^0(\widetilde E\otimes \widetilde L^{m})}{\rank \widetilde E},
\\ (\mbox{\rm respectively,} \;\;
\frac{h^0(\widetilde F\otimes \widetilde L^{m})}{\rank \widetilde F}&\leq&
\frac{h^0(\widetilde E\otimes \widetilde L^{m})}{\rank \widetilde E}\;);
\end{eqnarray*} 
\end{definition}

\begin{remark} When $\dim S > 2$, the need to characterize a sequence of centers for blowups in the standard resolution becomes a difficult problem if one wants to eliminate the data of the initial sheaf $E$ under the resolution for formulation of a moduli problem for pairs with no relation to the Gieseker--Maruyama moduli problem. The way to overcome this difficulty is to make the class of pairs wider without any direct restrictions imposed on the blowing up morphisms. This step brings an additional profit: this approach includes some other ways to compactify families of stable vector bundles on the initial algebraic variety $S$. \end{remark}

Let $\sigma_i\colon S_i \to S_{i-1}$ be a birational morphism such that 
\begin{itemize}
    \item $U \subset S_{i-1}$ is the biggest open subset such that $\sigma_i|_{(\sigma_i)^{-1}(U)}\colon(\sigma_i)^{-1}(U) \to U$ is an isomorphism. Denote $H:=S_{i-1}\setminus U.$
    \item  Denote $S^0_i:=\overline{(\sigma_i)^{-1}(U)}$ and call it a {\it principal component} of $S_i$. Also denote $$\sigma_i^0:= \sigma_i|_{S_i^0}\colon S_i^0 \to S_{i-1}.$$ The restriction $\sigma_i|_{S_i^0}\colon S_i^0 \to S_{i-1}$ is a blowup morphism with an exceptional divisor $\sigma_i^{-1}(H).$ 
    \item The restriction $\sigma_i|_{\overline{S_i\setminus S_i^0}}\colon \overline{S_i \setminus S_i^0} \to S_{i-1}$ factors through the closed immersion $H \hookrightarrow S_{i-1}$. This implies that
    \item there is a decomposition of the morphism $\sigma_i$ as $\sigma_i=\sigma_i^0 \circ \delta_i$, where $\delta_i\colon S_i \to S_i^0$ is such that $\delta_i|_{S_i^0}\colon S_i^0 \to S_i^0$ is an identity morphism and there is a decomposition  $$\delta_i|_{\overline{S_i \setminus S_i^0}}\colon\overline{S_i \setminus S_i^0} \to H \hookrightarrow S_{i-1}.$$
    \end{itemize}

    Admissible scheme is included in $$\widetilde S=S_\ell \stackrel{\sigma_\ell}{\longrightarrow} S_{\ell-1} \longrightarrow \dots \longrightarrow S_1 \stackrel{\sigma_1}{\longrightarrow}S=S_0$$
where $\sigma_i,$ $i=1,\dots, \ell$ are as described above.

We will consider $S_i=\Proj \bigoplus_{s\ge 0} (I_i[t]+(t))^s/(t)^{s+1}$ for $I_i \subset \OO_{S_{i-1}}$ and $\sigma_i$ as its structure morphism to $S_{i-1}$. We assume no specification for $I_i,$ $i=1, \dots. \ell.$

A very ample invertible sheaf $L_i$ on $S_i$ is fixed by the equality \begin{equation*}
L_i=[\sigma_i^\ast \dots  [\sigma_2^\ast[\sigma_1^\ast L^{m_1} \otimes (\sigma_1)^{-1} I_1 \cdot \OO_{S_1}]^{m_2} \otimes (\sigma_2)^{-1}I_2 \cdot \OO_{S_2}]^{m_3}\dots ]^{m_i}\otimes (\sigma_i)^{-1} I_\ell \cdot \OO_{S_i}
\end{equation*} and after redenoting $L:=L^{m_1\cdot \dots \cdot m_i}$ it satisfies $h^0(S_i, L_i^m)=h^0(S,L^m)$ for $m\gg 0.$ The sheaf $\widetilde L=L_\ell$ is still called a {\it distinguished polarization} on $\widetilde S.$

If $E_\ell=\widetilde E$ is obtained from the $\OO_S$-sheaf $E=E_0$ then $E_i=\delta_i^{\ast} E_i|_{S_i^0}/\tors$ and $E_i=\sigma_i^\ast E_{i-1}/\tors.$ The torsion is understood in the modified sense. In the case of a sheaf on an integral scheme the modified torsion becomes the usual torsion.

All the considerations about the structure of a sheaf and transformations of a scheme in the process of standard resolution and about interchanging the order of the morphisms $\sigma_i^j$ and $\delta_i^j$ (see Sec. \ref{s2} and \cite{Tim12} for details) hold for a single sheaf $E$ on $S$. This leads to the parallel procedure with consequent $E_i \cong \delta_i^\ast E_i^0$, $E_i^0=E_i|_{S_i^0}$. Also interchanging the order of morphisms $\sigma_i^j$ and $\delta_i^j$ holds for arbitrary ideals $I_i$ being not only Fitting ideals and leads to the resolution for subsequent short exact sequences analogous to (\ref{v}). Hence we rewrite the sequence of the morphisms $\sigma_i$ by $\delta_i$ followed by $\sigma_i^0$. We get $\sigma:= \sigma_\ell \circ \dots \circ \sigma_1= \sigma^0_{[\ell]} \circ  \delta^0_{[\ell]}$. In this case the principal component $\widetilde S^0$ of the whole of the scheme $\widetilde S$ is naturally distinguished: this is the unique component $\widetilde S^0=(\sigma^0_{[\ell]})^{-1} S$. Now by \cite[Sec.~5]{Tim12} $\widetilde E=( \delta^0_{[\ell]}) ^\ast \widetilde E|_{\widetilde S_0}$.

\begin{definition}\label{semistable} An $S$-{\it (semi)stable pair}
$((\widetilde S,\widetilde L), \widetilde E)$ is the following
data:
\begin{itemize}
\item{$\widetilde S=\bigcup_{i\ge 0} \widetilde S_i$ 
an admissible scheme, $S_i$, $i\ge 0$ its components, $\sigma \colon \widetilde S \to S$ a canonical
morphism;}
\item{$\widetilde E$ is a vector bundle on the scheme
$\widetilde S$;}
\item{$\widetilde L \in Pic\, \widetilde S$ is a distinguished polarization;}
\end{itemize}
such that
\begin{itemize}
\item{$\chi (\widetilde E \otimes \widetilde
L^{n})=rp_E(n);$}
\item{the sheaf $\widetilde E$ is {\it Gieseker-stable} (resp., {\it Gieseker-semistable}) with respect to the polarization $\widetilde L$ on
the scheme~$\widetilde S$. }
\item{
the sheaf  $\widetilde E$ is a
{\it quasi-ideal sheaf,} namely it has a description of the form
$\widetilde E= (\delta^0_{[\ell]} )^\ast \widetilde E|_{\widetilde S_0}$. }\end{itemize}
\end{definition}

\begin{remark} If  $\widetilde S \cong S,$ then the stability (resp., semistability)
of a pair $(\widetilde S, \widetilde E)$
is equivalent to the Gieseker-stability (resp., semistability) of the vector bundle
$\widetilde E$ on the variety $\widetilde S\cong S$ with respect to the
polarization $\widetilde L \in Pic\, \widetilde S.$\end{remark}

To investigate the relation of an $S$-stability (resp., $S$-semistability) of the sheaf
 $\widetilde E$ to the Gieseker-stability (resp., Gieseker-semistability) of
 the corresponding sheaf $E$ on the variety $S$ note that for $m\gg 0$ one has
 $rp_E(m)=h^0(E\otimes
L^{m})$. For the Gieseker-stability the behaviour of the Hilbert
polynomial under $m\gg 0$ is important. Therefore we assume that
$m$ is big enough.

%%%%%%%%%%%%%%%%%%%%%%%%%%%%%%%%%%%%%%%%%%%%%%%%%%%%%%%%%%%%%%%%%%%%%%%%
%%% HERE ABOUT DISTINGUISHED ISOMORPHISM OF GLOBAL SECTIONS GROUPS!!! %%
%%%%%%%%%%%%%%%%%%%%%%%%%%%%%%%%%%%%%%%%%%%%%%%%%%%%%%%%%%%%%%%%%%%%%%%%
Now we regret for a time to a discussion of the standard resolution procedure.
We consider a family $\Sigma=T\times S$ whose base $T$ is a projective curve, and this  family is supplied with a sheaf~$\E$ which is flat over~$T$ and provides a torsion-free sheaf when restricted on any fiber. Also let $\Sigma$ carries an invertible sheaf $\L$ which is very ample relatively to~$T$ such that fiberwise Hilbert polynomial $\chi((\E\otimes \L^u)|_{t\times S})$ is uniform over~$T$ and equals to $rp(u)$. Let a general fiber  $t\times S$ over $t\in T$ carry a locally free sheaf $\E|_{t\times S}$ and a special fiber carry a non locally free sheaf $\E|_{t\times S}$. 
Applying the standard resolution to this family we come to a family $\pi\colon\widetilde \Sigma \to T$ supplied with a locally free sheaf $\widetilde \E.$ Since any projective morphism with a base being a projective curve is necessarily flat, then  $\pi$ is to be flat. Since general fibers do not overcome any transformation, the Hilbert polynomials of fiberwise restrictions of $\widetilde \E$ with respect to $\widetilde \L$ remain equal to the initial Hilbert polynomial
$\chi((\widetilde \E\otimes \widetilde \L^u)|_{\pi^{-1}(t)}=\chi((\E\otimes \L^u)|_{t\times S})=rp(u)$.

If we regret to the iterative process of the standard resolution then each step is a standard resolution of a sheaf of the homological dimension equal to 1 by the morphism $\sigma\!\!\!\sigma_i=\sigma\!\!\!\sigma_i^0 \circ \delta\!\!\delta_i \colon \Sigma_i\to \Sigma_{i-1}$. This step transforms the flat family of schemes $\Sigma_{i-1}$ with a distinguished polarization $\L_{i-1}$ to the flat family of schemes $\Sigma_i$ with a distinguished polarization $\L_i$.
In the triple $$0\to \delta\!\!\delta_i^\ast W'_{\ell-i} \to \sigma \!\!\! \sigma_i ^{\ast}
\sigma\!\!\!\sigma_{i-1}^\ast\dots \sigma \!\!\! \sigma_1 ^\ast
\widehat E_{\ell-i-1} \to \sigma \!\!\! \sigma_i ^{\ast}
\sigma\!\!\!\sigma_{i-1}^\ast \dots \sigma \!\!\! \sigma_1 ^\ast
 W_{\ell -i-1}
\to 0,
$$
 $\delta\!\!\delta_i^\ast W'_{\ell-i}$ being locally free on $\Sigma_i$ turns to be $T$-flat and hence the  cokernel of the injective morphism of flat sheaves is also necessarily flat. Since uniformity of fiberwise Hilbert polynomial holds for the distinguished polarization $\widetilde \L_i$ for the kernel and the extension. The same holds for the cokernel.

Now we get confirmed that the standard resolution takes flat families to flat families with uniformity of fiberwise Hilbert polynomial computed with respect to the distinguished polarization.

\begin{proposition} \label{sectionsrel}
The standard resolution of the family $((\Sigma, \L),\E)$ induces the fixed isomorphism of $k$-vector spaces $H^0(\widetilde S, \widetilde E \otimes \widetilde L^m)\cong H^0(S,E\otimes L^m)$, for all $m\gg 0$, for $\widetilde E=\widetilde \E|_{\pi^{-1}(t)}$, $E=\E|_{t\times S}$ and for each closed point $t\in T.$
\end{proposition}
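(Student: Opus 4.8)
The plan is to track the cohomology through each elementary step of the standard resolution and show that $H^0$ of the twist is preserved at every stage, then compose the isomorphisms. I would work fiberwise, so fix a closed point $t \in T$ and write $E = \E|_{t\times S}$, with the chain of morphisms $\widetilde S = S_\ell \to \dots \to S_1 \to S_0 = S$ and the intermediate sheaves $E_i = \sigma_i^\ast E_{i-1}/\tors$, $E_0 = E$, $E_\ell = \widetilde E$. The distinguished polarizations $L_i$ are built by the recursive formula \eqref{polshort}, so at each stage $L_i = \sigma_i^\ast L_{i-1} \otimes (\text{an exceptional ideal sheaf contribution})$, and after the redenoting convention one has $h^0(S_i, L_i^m) = h^0(S, L^m)$ for $m \gg 0$. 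The claim then reduces to an isomorphism $H^0(S_i, E_i \otimes L_i^m) \cong H^0(S_{i-1}, E_{i-1} \otimes L_{i-1}^m)$ for each $i$, and I would further decompose this using $\sigma_i = \sigma_i^0 \circ \delta_i$ into (a) the blowup step $\sigma_i^0\colon S_i^0 \to S_{i-1}$ and (b) the component-growing step $\delta_i\colon S_i \to S_i^0$.

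For step (b) the key input is the quasi-ideality relation: $E_i = \delta_i^\ast(E_i|_{S_i^0})$, so by the projection formula $H^0(S_i, E_i \otimes L_i^m) = H^0(S_i^0, (E_i|_{S_i^0}) \otimes \delta_{i\ast}L_i^m)$, and since $\delta_i$ contracts the added components onto the exceptional divisor while restricting to the identity on $S_i^0$, one checks $\delta_{i\ast}L_i^m$ agrees with the polarization on $S_i^0$ in degree $m \gg 0$ (this is precisely why the polarization exponents $m_i$ in \eqref{polshortexc} were chosen large); higher direct images of the relevant line bundle along $\delta_i$ vanish in large degree. For step (a), the blowup $\sigma_i^0$ in an ideal sheaf $I_i$ together with the standard-resolution analysis of \cite{Tim12} gives that $\sigma_i^{0\ast}E_{i-1}/\tors$ is locally free and that $R^q\sigma_{i\ast}^0$ of $E_i^0 \otimes (\text{the exceptional twist})$ vanishes for $q>0$ and $m \gg 0$, while $\sigma_{i\ast}^0$ of it recovers $E_{i-1} \otimes L_{i-1}^m$ up to a subsheaf supported on $H = S_{i-1}\setminus U$ which contributes nothing to $H^0$ after the twist (the exceptional ideal factor in $L_i$ exactly compensates the pullback). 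Concretely, I would invoke the short exact sequences \eqref{v} (in their single-sheaf incarnation described just before Definition~\ref{semistable}) whose kernels $W'_{\ell-i}$ are locally free, so that pushing forward and using $L_i$-uniformity of Hilbert polynomials (already established in the paragraph preceding the Proposition, where flatness and fiberwise uniformity were confirmed) forces the $h^0$'s to match.

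Composing the isomorphisms $H^0(S_i, E_i \otimes L_i^m) \cong H^0(S_{i-1}, E_{i-1}\otimes L_{i-1}^m)$ for $i = \ell, \ell-1, \dots, 1$ yields $H^0(\widetilde S, \widetilde E \otimes \widetilde L^m) \cong H^0(S, E \otimes L^m)$. To see that this isomorphism is \emph{fixed} (canonical, independent of the chosen locally free resolution), I would appeal to the fact recorded in the excerpt that the sequence of morphisms \eqref{seqmor} does not depend on the choice of locally free resolution \cite{Tim12}, so each elementary isomorphism is the canonical pullback-and-restrict map; moreover the adjunction unit $E_{i-1} \to \sigma_{i\ast}\sigma_i^\ast E_{i-1} \to \sigma_{i\ast}E_i$ provides a natural choice of the map in one direction, and one shows it is an isomorphism on global sections of the twist. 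Finally, since everything in sight is compatible with the family structure over $T$ (the sheaves $\widetilde\E$, $\widetilde\L$ are flat over $T$ and restrict correctly to fibers by the already-cited results), the fiberwise isomorphism is the restriction of a global one, giving the stated uniformity over $t \in T$.

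I expect the main obstacle to be step (a): verifying that the exceptional-ideal twisting factor built into $L_i$ exactly cancels the discrepancy between $\sigma_i^{0\ast}E_{i-1}$ and its torsion-free quotient $E_i^0$ at the level of global sections of the high twist. Equivalently, one must show that $H^0$ of the torsion subsheaf $\tors \subset \sigma_i^{0\ast}E_{i-1}$, twisted by $L_i^m$, vanishes and that no new sections appear on the exceptional locus — this is where the choice "$m_i$ big enough" genuinely enters, and it requires the cohomological vanishing statements for blowups that are implicit in the standard-resolution machinery of \cite{Tim12} rather than anything formal. The component-growing step (b) and the bookkeeping of polarizations are comparatively routine once the quasi-ideality relation $\widetilde E = (\delta^0_{[\ell]})^\ast \widetilde E|_{\widetilde S^0}$ is invoked.
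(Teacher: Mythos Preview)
Your inductive step-by-step approach is plausible but takes a genuinely different route from the paper, and the route you chose makes the argument harder rather than easier. The paper's proof does not decompose into the individual morphisms $\sigma_i$ at all. Instead it works directly with the composite $\sigma\!\!\!\sigma\colon\widetilde\Sigma\to\Sigma$ and exploits two facts already established before the Proposition: (i) both $p_\ast(\E\otimes\L^m)$ and $\pi_\ast(\widetilde\E\otimes\widetilde\L^m)$ are locally free of the same rank $rp(m)$, and (ii) the morphism $\sigma_t$ restricts to an isomorphism between dense open subsets $\widetilde S_{0t}\cong S_{0t}$ on which the sheaves $E_t$ and $\widetilde E_t$ (with their twists) agree. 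The paper builds two monomorphisms $p_\ast(\E\otimes\L^m)\hookrightarrow p_\ast(\sigma\!\!\!\sigma_\ast\widetilde\E\otimes\L^m)\hookleftarrow\pi_\ast(\widetilde\E\otimes\widetilde\L^m)$, then argues fiberwise via base change that both images land inside $H^0(S_{0t},(E_t\otimes L^m)|_{S_{0t}})$ by injective restriction maps, and concludes that the two $rp(m)$-dimensional subspaces coincide there.

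The concrete gap in your approach is the ``main obstacle'' you yourself flag: your step~(a) for a single blowup $\sigma_i^0$ is not actually simpler than the full Proposition --- it is the full Proposition in the special case $\ell=1$. You defer it to unspecified cohomological vanishing from \cite{Tim12}, but the paper does not invoke any such vanishing; rather, it sidesteps the issue entirely by never needing to control $R^q\sigma_\ast^0$ or the torsion subsheaf's contribution to $H^0$. Your projection-formula argument for step~(b) is reasonable, but since step~(a) carries essentially all the content and you have not reduced its difficulty, the induction buys nothing. The paper's direct method is both shorter and avoids the vanishing hypotheses you would need to justify.
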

\begin{remark}
In the case when $\hd \E=1$ this result was proven in \cite{Tim3} using the fact that in this case  $\E$ is a reflexive sheaf. But reflexivity is not guaranteed when $\hd \E > 1.$
\end{remark}
\begin{proof}
Start with a commutative diagram
\begin{equation*}
    \xymatrix{\widetilde \Sigma \ar[d]_\pi \ar[r]^{\sigma\!\!\!\sigma}& \Sigma \ar[d]^p\\
    T\ar@{=}[r]&T}
\end{equation*}
and consider the  morphisms of  sheaves
\begin{eqnarray*}
\E \otimes \L^m \hookrightarrow \sigma\!\!\!\sigma_\ast \sigma\!\!\!\sigma^{\ast}(\E\otimes \L^m)=\sigma\!\!\!\sigma_\ast (\sigma\!\!\!\sigma^{\ast}\E\otimes \sigma\!\!\!\sigma^{\ast}\L^m) \to 
\sigma\!\!\!\sigma_\ast (\sigma\!\!\!\sigma^{\ast}\E/\tors\otimes \sigma\!\!\!\sigma^{\ast} \L^m)=\sigma\!\!\!\sigma_\ast \widetilde\E\otimes \L^m,\\
\sigma\!\!\!\sigma_{\ast}(\widetilde \E \otimes \widetilde \L^m)\hookrightarrow \sigma\!\!\!\sigma_{\ast}\widetilde \E \otimes \L^m.
\end{eqnarray*}   
The first row composite morphism is injective. Now apply the direct image $p_\ast$ and by the equality $p_\ast \sigma\!\!\!\sigma_{\ast}=\pi_{\ast}$ we come to the following two monomorphisms 
$$p_\ast (\E \otimes \L^m) \hookrightarrow p_\ast (\sigma\!\!\!\sigma_\ast \widetilde\E\otimes \L^m) \hookleftarrow \pi_\ast (\widetilde \E \otimes \widetilde \L^m).
$$
Now we are to prove that images of both locally free sheaves $p_\ast (\E \otimes \L^m)$ and $\pi_\ast (\widetilde \E \otimes \widetilde \L^m)$ coinside in $p_\ast (\sigma\!\!\!\sigma_\ast \widetilde\E\otimes \L^m)$.
We will prove it by restrictions onto fibers.
Since $\pi$ is a projective morphism of Noetherian schemes, the sheaf $\widetilde \E \otimes \widetilde \L^m$ is flat over $T$ and functions $t\mapsto H^0(\pi^{-1}(t), (\widetilde \E \otimes \widetilde \L^m)|_{\pi^{-1}(t)})$ for $m\gg0$ are constant in $t\in T$, then by \cite[ch.III, Corollary 12.9]{Hart} there is an isomorphism 
$$\pi_\ast (\widetilde \E \otimes \widetilde \L^m) \otimes k_t \cong H^0(\pi^{-1}(t), (\widetilde \E \otimes \widetilde \L^m)|_{\pi^{-1}(t)}). 
$$
Also by the analogous reason 
$$p_\ast (\E\otimes \L^m) \otimes k_t \cong H^0(t\times S, (\E \otimes\L^m)|_{t\times S}).
$$
Note that the morphism $\sigma_t\colon\pi^{-1}(t) \to t\times S$, which is induced by the restriction of $\sigma\!\!\!\sigma$ to the fiber  over $t\in T$, is an isomorphism on a nonempty open subset. Namely, for $\widetilde S_t:=\pi^{-1}(t)$ there are open subschemes $\widetilde S_{0t} \subset \widetilde S_t$ and $S_{0t}\subset t\times S$ such that $\sigma|_{\widetilde S_{0t}}\colon\widetilde S_{0t} \to S_{0t}$ is an isomorphism. Also for $\widetilde \E|_{\pi^{-1}(t)}=\widetilde E_t$, $\widetilde L_t=\widetilde \L|_{\pi^{-1}(t)}$ and for $E_t=\E|_{t\times S}$ the corresponding restrictions are isomorphic locally free sheaves: $\widetilde E_t|_{\widetilde S_{0t}} \cong \sigma_{0t}^{\ast} E_t|_{S_{0t}}$. Here $S_{0t}$ is precisely the maximal subset where the sheaf $E_t$ is locally free.
The restriction maps $$H^0(t\times S, E_t \otimes L^m) \hookrightarrow H^0(S_{0t}, (E_t \otimes L^m)|_{S_{0t}})$$ and $$H^0(\widetilde S_t, \widetilde E_t \otimes \widetilde L_t^m) \hookrightarrow H^0(\widetilde S_{0t}, (\widetilde E_t \otimes \widetilde L_t^m)|_{\widetilde S_{0t}})$$ together with coincidence 
$H^0(S_{0t}, (E_t \otimes L^m)|_{S_{0t}})=H^0(\widetilde S_{0t}, (\widetilde E_t \otimes \widetilde L_t^m)|_{\widetilde S_{0t}})$ lead to the bijective  correspondence $$H^0(t\times S, E_t \otimes L^m) \sim H^0(\widetilde S_t, \widetilde E_t \otimes \widetilde L_t^m)$$ also as images in $\sigma\!\!\!\sigma_\ast \widetilde\E\otimes \L^m \otimes k_t$. 
\end{proof}
Hence Proposition \ref{sectionsrel} fixes the isomorphism $\upsilon \colon H^0(\widetilde S, \widetilde E \otimes
\widetilde L^{m}) \stackrel{\sim}{\to} H^0(S,E\otimes L^{m})$.
\begin{proposition}\label{ssc} Let the locally free  $\OO_{\widetilde
S}$-sheaf $\widetilde E$ is obtained from a coherent $\OO_S$-sheaf
$E$ by its standard resolution. The sheaf $\widetilde E$ is
stable (resp., semistable) on the scheme
 $\widetilde S$ if and only if the sheaf $E$ is stable (resp., semistable).
\end{proposition}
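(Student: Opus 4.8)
The plan is to convert the two implications of the biconditional into comparisons of the quantities $h^0(\,\cdot\otimes(\text{polarization})^m)$ for $m\gg0$ attached to matching subsheaves on $S$ and on $\widetilde S$, everything being tied together by the isomorphism $\upsilon\colon H^0(\widetilde S,\widetilde E\otimes\widetilde L^m)\stackrel{\sim}{\to}H^0(S,E\otimes L^m)$ furnished by Proposition~\ref{sectionsrel} (read with $T$ a reduced point). First I would record the geometric input. Let $S_0\subset S$ be the largest open set over which $E$ is locally free; the standard resolution makes $\sigma$ restrict to an isomorphism $\sigma_0\colon\widetilde S_0\stackrel{\sim}{\to}S_0$ with $\widetilde E|_{\widetilde S_0}\cong\sigma_0^\ast(E|_{S_0})$, and on all the additional components $\widetilde E$ is pulled back from the principal component via the quasi-ideality $\widetilde E=(\delta^0_{[\ell]})^\ast\widetilde E|_{\widetilde S^0}$. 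From \eqref{polshort} the ``exceptional'' factor $\mathrm{Exc}_{S_\ell}$ is an invertible sheaf of ideals, so there is an inclusion $\widetilde L\hookrightarrow\sigma^\ast L$; and from the proof of Proposition~\ref{sectionsrel} there is an inclusion $E\hookrightarrow\sigma\!\!\!\sigma_\ast\widetilde E$ compatible with $\upsilon$.

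Next I would set up a rank-preserving correspondence between subsheaves. Given $\widetilde F\subset\widetilde E$, put $F:=E\cap\sigma_\ast\widetilde F$ (intersection inside $\sigma_\ast\widetilde E$); given $F\subset E$, let $\widetilde F\subset\widetilde E$ be the saturation, in the modified sense, of $\im(\sigma^\ast F\to\sigma^\ast E\twoheadrightarrow\widetilde E)$. Restricting to $S_0$, resp.\ $\widetilde S_0$, shows $\rank F=\rank\widetilde F$ in both cases, so a proper saturated subsheaf (which has strictly smaller rank) is sent to a proper saturated subsheaf. For the cohomology I would argue as follows. If $\widetilde F\subset\widetilde E$ and $F=E\cap\sigma_\ast\widetilde F$, then $\widetilde L\hookrightarrow\sigma^\ast L$ and the projection formula give $\sigma_\ast(\widetilde F\otimes\widetilde L^m)\subseteq\sigma_\ast\widetilde F\otimes L^m$, and a global section of $\widetilde F\otimes\widetilde L^m$, transported by $\upsilon$, lands inside $H^0\bigl((E\cap\sigma_\ast\widetilde F)\otimes L^m\bigr)=H^0(F\otimes L^m)$; hence $h^0(\widetilde F\otimes\widetilde L^m)\le h^0(F\otimes L^m)$. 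Dually, if $F\subset E$ and $\widetilde F$ is the saturation of the image of $\sigma^\ast F$, then a section of $F\otimes L^m$ restricts on $S_0$ to a section of $F|_{S_0}\otimes L^m$, pulls back to a section of $\widetilde F|_{\widetilde S_0}\otimes\widetilde L^m$, and since $\upsilon^{-1}$ is precisely ``extend this restriction uniquely from the dense open $\widetilde S_0$'' and $\widetilde F$ is saturated, the extended section lies in $H^0(\widetilde F\otimes\widetilde L^m)$; hence $h^0(F\otimes L^m)\le h^0(\widetilde F\otimes\widetilde L^m)$. (In fact the two assignments are mutually inverse on saturated subsheaves, so the two inequalities combine into an equality, but only one inequality is needed in each direction below.)

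Finally I would assemble the argument. Suppose $E$ is Gieseker-(semi)stable, and let $\widetilde F\subsetneq\widetilde E$ be proper. By the usual reduction it suffices to treat $\widetilde F$ saturated with $0<\rank\widetilde F<\rank\widetilde E$ (equal-rank proper subsheaves give the inequality automatically, and saturation does not lower $h^0(\,\cdot\otimes\widetilde L^m)$). With $F=E\cap\sigma_\ast\widetilde F$, for $m\gg0$ all the $h^0$'s equal the corresponding Hilbert polynomials (ampleness of $L$ and $\widetilde L$), and
\[
\frac{h^0(\widetilde F\otimes\widetilde L^m)}{\rank\widetilde F}\ \le\ \frac{h^0(F\otimes L^m)}{\rank F}\ <\ \frac{h^0(E\otimes L^m)}{\rank E}\ =\ \frac{h^0(\widetilde E\otimes\widetilde L^m)}{\rank\widetilde E},
\]
the middle inequality being stability of $E$ (and becoming $\le$ in the semistable case); so $\widetilde E$ is Gieseker-(semi)stable. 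The converse is the mirror image: test a proper saturated $F\subsetneq E$ against the associated $\widetilde F$ and use $h^0(F\otimes L^m)\le h^0(\widetilde F\otimes\widetilde L^m)$. The point that needs the most care — and essentially the only obstacle — is the reducibility of $\widetilde S$ together with the modified torsion: one must verify that the rank of a subsheaf on the additional components is genuinely governed by its restriction to the principal component, and that the ``extend from $\widetilde S_0$'' description of $\upsilon^{-1}$ really keeps a section inside the modified saturation $\widetilde F$. Both follow from the quasi-ideality $\widetilde E=(\delta^0_{[\ell]})^\ast\widetilde E|_{\widetilde S^0}$, but should be checked explicitly; the case $\widetilde S\cong S$ (i.e.\ $E$ locally free) is subsumed, since then $\sigma$ is an isomorphism.
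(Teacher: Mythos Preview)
Your route differs from the paper's, and the difference matters. The paper never forms $E\cap\sigma_\ast\widetilde F$ or the saturation of $\im(\sigma^\ast F\to\widetilde E)$. Instead, having fixed the isomorphism $\upsilon$ of Proposition~\ref{sectionsrel}, it simply transports the space of sections: given a proper subsheaf $\widetilde F\subset\widetilde E$, set $V_{\widetilde F}=H^0(\widetilde S,\widetilde F\otimes\widetilde L^m)$, put $V_F=\upsilon(V_{\widetilde F})\subset H^0(S,E\otimes L^m)$, and let $F\subset E$ be the subsheaf \emph{generated} by $V_F$. Then $\dim V_F=\dim V_{\widetilde F}=h^0(\widetilde F\otimes\widetilde L^m)$ and $V_F\subset H^0(S,F\otimes L^m)$, so $h^0(\widetilde F\otimes\widetilde L^m)\le h^0(F\otimes L^m)$ is automatic; ranks agree because $\widetilde F$ and $F$ coincide over the open set where $\sigma$ is an isomorphism. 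The converse is literally the same argument with the roles of $S$ and $\widetilde S$ swapped. No saturation, no extension across additional components, no appeal to quasi-ideality is needed.

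Your argument for the implication ``$E$ (semi)stable $\Rightarrow$ $\widetilde E$ (semi)stable'' is essentially correct: with $F=E\cap\sigma_\ast\widetilde F$ and the description of $\upsilon$ as the identification of the two images inside $H^0(S,\sigma_\ast\widetilde E\otimes L^m)$, one has $H^0(F\otimes L^m)=H^0(E\otimes L^m)\cap H^0(\sigma_\ast\widetilde F\otimes L^m)$ by left exactness of $H^0$, and the inequality follows. The genuine gap is in the converse. You need that $\upsilon^{-1}\bigl(H^0(F\otimes L^m)\bigr)\subset H^0(\widetilde F\otimes\widetilde L^m)$ for your chosen $\widetilde F$, i.e.\ that a section of $\widetilde E\otimes\widetilde L^m$ lying in $\widetilde F\otimes\widetilde L^m$ over the dense open $\widetilde S_0$ actually lies in $\widetilde F\otimes\widetilde L^m$ globally. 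On a reducible scheme this fails in general: the image of such a section in $(\widetilde E/\widetilde F)\otimes\widetilde L^m$ can be a nonzero section supported on an additional component, and ``saturated in the modified sense'' does not by itself exclude that. You flag this and defer to quasi-ideality, but you do not carry out the check; that is the missing step. The paper's ``generated subsheaf'' device sidesteps the issue entirely, because the containment $V\subset H^0(\text{sheaf generated by }V)$ holds tautologically on any scheme.
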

\begin{proof} Let  $E$ be  a Gieseker-semistable sheaf on
$(S,L)$ and $\widetilde E$ be a locally free sheaf on the scheme
$\widetilde S$. Let $\widetilde E$ be obtained from $E$ by the 
standard resolution.  Consider a proper subsheaf  $\widetilde
F \subset \widetilde E.$ Since  $m\gg 0$ we assume that both the
sheaves  $\widetilde E \otimes \widetilde L^{m}$ and  $\widetilde
F\otimes \widetilde L^{m}$ are globally generated. Fix an
epimorphism $H^0(\widetilde S, \widetilde E \otimes \widetilde
L^{m}) \otimes \widetilde L^{(-m)} \twoheadrightarrow \widetilde
E$. The subsheaf~$\widetilde F$ is generated by a subspace of
global sections $V_{\widetilde F}=H^0(\widetilde S, \widetilde F
\otimes \widetilde L^{m})\subset H^0(\widetilde S, \widetilde E
\otimes \widetilde L^{m}).$ Then a subspace $V_F\subset H^0(S,
E\otimes L^{m})$ which is isomorphic to $V_{\widetilde F}$ and
generates some subsheaf $F\in E$, is given by the 
isomorphism $\upsilon \colon H^0(\widetilde S, \widetilde E \otimes
\widetilde L^{m}) \stackrel{\sim}{\to} H^0(S,E\otimes L^{m})$ from Proposition \ref{sectionsrel} by
the equality $V_F=\upsilon (V_{\widetilde F})$. Since the sheaves
 $\widetilde F$ and $F$ are canonically isomorphic on 
 corresponding open subsets of schemes $\widetilde S$ and $S$,
 then their ranks are equal. Clearly,  $V_F= H^0(S, F\otimes L^{ m})$
 and
\begin{eqnarray*}\frac{h^0(\widetilde S,
\widetilde E \otimes \widetilde L^{m})}{r}&-& \frac{h^0(\widetilde
S,
\widetilde F \otimes \widetilde L^{m})}{r'}\nonumber \\
=\frac{h^0(S, E \otimes L^{m})}{r}&-& \frac{h^0( S, F \otimes
L^{m})}{r'}>(\ge) 0.\nonumber
\end{eqnarray*}
This implies semistability of $\widetilde E.$ The opposite
implication is proven similarly.
\end{proof}
\begin{remark} \label{sscr} This shows that there is a bijection among
subsheaves of $\OO_S$-sheaf $E$ and subsheaves of the
corresponding $\OO_{\widetilde S}$-sheaf $\widetilde E.$ This
bijection preserves Hilbert polynomials.
\end{remark}
%%%%%%%%%%%%%%%%%%%%%%%%%%%%%%%%%%%%%%%%%%%%%%%%%%%%%%%%%%%%%%%%%
\section{An equivalence of semistable admissible pairs}\label{s4}
In this section we introduce and examine an equivalence relation for admissible pairs of an arbitrary dimension. This equivalence generalizes S-equivalence for semistable coherent sheaves in the Gieseker--Maruyama moduli problem and is an analog of M-equivalence (monoidal equivalence) for the case $\dim S=2$ \cite{Tim4}. The major part of proofs concerning the equivalence are very similar to the ones in \cite{Tim4} but all the consideration is included here for the convenience of the reader. 
Since the class of admissible schemes in the case of $\dim S >2$ is sufficiently wider then the class of admissible schemes for the case of $\dim S=2$, the notion of  M-equivalence for higher dimensions is strengthened. 

Now we need some additional construction. Let $\sigma\colon \widetilde S \to S$ and $\sigma'\colon \widetilde S' \to S$ be two admissible schemes with their structure morphisms.

In this section we investigate the behaviour of a Jordan--H\"{o}lder 
filtration for a semistable coherent sheaf under the
standard resolution. Also a notion of M-equi\-val\-ence for
semi\-stable pairs is introduced and a relation of M-equivalence of resolutions to
S-equival\-ence for initial semistable coherent sheaves is examined. In
particular, it is proven that S-equivalent coherent sheaves on the
variety $S$ lead to M-equivalent pairs of the form
$((\widetilde S, \widetilde L), \widetilde E)$ when resolved.

Recall some classical notions from the theory of semistable coherent
sheaves.
\begin{definition}[\cite{HL}, Definition 1.5.1]
A {\it Jordan--H\"{o}lder filtration} for a semistable sheaf
 $E$ with a reduced Hilbert polynomial $p_E(n)$ on the
 polarized projective scheme  $(X,L)$ is a sequence of subsheaves
 \begin{equation*} 0=F_0\subset F_1
\subset \dots \subset F_s=E
\end{equation*}
such that quotient sheaves  $\gr_i(E)=F_i / F_{i-1}$, $i=1, \dots, s$, are stable
with reduced Hilbert polynomials equal to $p_E(n)$.
\end{definition}
Denote  $\gr(E):=\bigoplus \gr_i (E_i)$. The well-known theorem \cite[Prop.
1.5.2]{HL} claims that the isomorphism class of the sheaf
 $\gr(E)$ has no dependence on a choice of the Jordan--H\"{o}lder
 filtration of $E$.
\begin{definition}[\cite{HL}, Definition 1.5.3] Semistable
sheaves $E$ and $E'$ are called  {\it S-equivalent} if
$\gr(E)=\gr(E').$
\end{definition}
\begin{remark} Obviously, S-equivalent stable sheaves are
isomorphic.\end{remark}

Define a Jordan--H\"{o}lder filtration for an $S$-semistable sheaf on
a reducible admissible polarized scheme $(\widetilde S, \widetilde
L)$. This definition will be completely analogous to the classical
definition for a Gieseker-semistable sheaf.

\begin{definition} A {\it Jordan--H\"{o}lder filtration}
for a sheaf $\widetilde E$ on the polarized project\-ive reducible
scheme $(\widetilde S, \widetilde L)$ such that a pair
$((\widetilde S, \widetilde L), \widetilde E)$ is semistable in
the sense of Definition \ref{semistable}, and with reduced Hilbert
polynomial  $p_{E}(n),$ is a sequence of subsheaves
\begin{equation*} 0=\widetilde F_{0}\subset \widetilde F_1
\subset \dots \subset \widetilde F_{\ell}=\widetilde E,
\end{equation*}
such that the quotients  $\gr_i(\widetilde E)=\widetilde F_i
/\widetilde F_{i-1}$ are Gieseker-stable with reduced Hilbert
polynomials equal to $p_E(n)$.
\end{definition}

Example 2 in \cite{Tim4} shows that S-equivalent coherent sheaves can
have different associated sheaves of Fitting ideals yielding
non-iso\-morphic schemes $\widetilde S$ (even in the case of  $\hd E=1$).

Example 3 in \cite{Tim4} shows that  fibered product cannot be used
to construct a notion of equivalence for semistable pairs.

Since a resolution can consist of several morphisms $\sigma_i \colon S_i \to S_{i-1}$, $i=1,\dots, \ell,$ followed by each other,  we develop the notion of equivalence which is an analog  of S-equivalence in classical sheaf moduli theory. Also our notion of equivalence is a straightforward  generalization of M-equivalence of semistable admissible pairs in the case of $\hd E=1$. It is enough to analyze one step of the resolution, i.e. one morphism $\sigma_i$. The construction of an admissible scheme provides all $S_i$ being equidimensional schemes. 

Now we restrict ourselves by one step but all the construction holds when $\sigma$'s are iterated because the reasoning below does not depend on nonsingularity, irreducibility and reducedness of the scheme $S$ (it can be replaced by $S_i$, $i=2,\dots, \ell-1$).

Now consider schemes \begin{eqnarray*}\widetilde S_1&=&\Proj \bigoplus _{s\ge
0}(I_1[t]+(t))^s/(t)^{s+1},\\\widetilde S_2&=& \Proj
\bigoplus _{s\ge 0}(I_2[t]+(t))^s/(t)^{s+1} \end{eqnarray*} with their canonical
morphisms $\sigma _1\colon \widetilde S_1 \to S$ and $\sigma_2\colon 
\widetilde S_2 \to S$ to the scheme  $S$. Form inverse images of
sheaves of ideals $I'_2=\sigma_1^{-1} I_2 \cdot \OO_{\widetilde
S_1}\subset \OO_{\widetilde S_1}$ and $I'_1=\sigma_2^{-1} I_1
\cdot \OO_{\widetilde S_2}\subset \OO_{\widetilde S_2}$, and the corresponding
projective spectra \begin{eqnarray*}\widetilde S_{12}&=&\Proj (\bigoplus _{s\ge
0}(I'_2[t]+(t))^s/(t)^{s+1}),\\
\widetilde S_{21}&=&\Proj (\bigoplus _{s\ge 0}(I'_1[t]+(t))^s/(t)^{s+1}).\end{eqnarray*} There are
canonical morphisms  $\sigma'_2\colon\widetilde S_{12}\to \widetilde
S_1$ and $\sigma'_1\colon\widetilde S_{21}\to \widetilde S_2$.

\begin{proposition}\label{coversch} $\widetilde S_{12}$ and $\widetilde
S_{21}$ are equidimensional schemes. Moreover, $\widetilde S_{12}
\cong \widetilde S_{21}.$
\end{proposition}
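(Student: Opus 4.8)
The plan is to identify both $\widetilde S_{12}$ and $\widetilde S_{21}$ with the blowup of $S\times\A^1$ along a single coherent sheaf of ideals, thereby obtaining the isomorphism for free and equidimensionality as a byproduct. First I would rewrite each scheme in the statement in the "flattened" form used throughout Section~\ref{s2}: by construction $\widetilde S_1 = \Proj\bigoplus_{s\ge0}(I_1[t]+(t))^s/(t)^{s+1}$ is the blowup of $S\times\A^1$ along $I_1'=I_1[t]+(t)$ restricted to the zero fiber, and similarly for $\widetilde S_2$. The key computational observation is that forming $I_2' = \sigma_1^{-1}I_2\cdot\OO_{\widetilde S_1}$ and then blowing up $\widetilde S_1$ along the flattened version of $I_2'$ is the same as blowing up $S\times\A^1$ along the product ideal (or rather the ideal generated by) $I_1\cdot I_2$ together with $(t)$ — this is just the standard fact that iterated blowups along ideals $J_1$ then (the total transform of) $J_2$ coincide with the blowup along $J_1 J_2$, combined with the base-change trick of Section~\ref{s2}. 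The same computation starting from $\widetilde S_2$ and $I_1'$ produces the blowup of $S\times\A^1$ along $I_2\cdot I_1$ plus $(t)$. Since $I_1 I_2 = I_2 I_1$, the two spectra are canonically isomorphic.

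Concretely, the key step is to verify the identity of Rees algebras
\[
\bigoplus_{s\ge0}\bigl((\sigma_1^{-1}I_2\cdot\OO_{\widetilde S_1})[t]+(t)\bigr)^s/(t)^{s+1}
\;\cong\;
\sigma_1^{0\,*}\text{-pullback of }\bigoplus_{s\ge0}(I_1 I_2[t]+(t))^s/(t)^{s+1},
\]
i.e. that the universal property of the second blowup (it makes $I_1 I_2\cdot(t)$ locally principal) is satisfied by the iterated construction, and conversely. Both directions follow from the universal property of $\Proj$ of a Rees algebra: $\widetilde S_{12}\to S\times\A^1$ inverts both $I_1$ (as $\widetilde S_1\to S\times\A^1$ already does) and $I_2$ (by the second blowup), hence inverts $I_1 I_2$, giving a map to the blowup along $I_1 I_2 \cdot (t)$; symmetrically one builds the inverse. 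The same argument applied to $\widetilde S_{21}$ gives a canonical isomorphism of each with the blowup of $S\times\A^1$ along $I_1 I_2\cdot(t)$ (respectively $I_2 I_1\cdot(t)$), and commutativity of ideal multiplication finishes the isomorphism $\widetilde S_{12}\cong\widetilde S_{21}$.

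For equidimensionality I would invoke Proposition~1 of \cite{Tim12} (quoted in the excerpt): the base-change-and-blowup construction applied to $S\times\A^1$, with the zero fiber cut out, produces a scheme flat over the base with fiberwise-uniform Hilbert polynomial; flatness over a one-dimensional base together with the fact that the generic fiber is the nonsingular variety $S$ (all the blown-up ideals being trivial away from the special locus) forces every fiber, in particular $\widetilde S_{12}$ itself, to be equidimensional of dimension $N=\dim S$. Equivalently, one uses that $\widetilde S_{12}$ is the strict-transform-free total transform appearing in the interchange diagram (\ref{interch}), so its equidimensionality is exactly the content of the "left vertical column provides flatness" remark following (\ref{bigdiag}).

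The main obstacle I anticipate is bookkeeping rather than conceptual: one must be careful that "$I_2' = \sigma_1^{-1}I_2\cdot\OO_{\widetilde S_1}$" is the \emph{total} transform (inverse-image ideal), not the strict transform, so that the iterated blowup genuinely equals the blowup along the product ideal and no spurious exceptional components are lost or gained; and one must check the $(t)$-adic truncation $J[t]+(t)$ behaves well under this, i.e. that $(I_1 I_2)[t]+(t) = \bigl((I_1[t]+(t))\cdot\text{(pullback of }I_2)\bigr)$ modulo the relevant nilpotents, which is where the explicit description of the additional components enters. Granting the formalism of \cite{Tim12} for iterating $\sigma\!\!\!\sigma_i = \sigma\!\!\!\sigma_i^0\circ\delta\!\!\delta_i$ and the interchangeability square (\ref{interch}), this reduces to the commutativity $I_1 I_2 = I_2 I_1$ of multiplication of ideal sheaves, which is immediate.
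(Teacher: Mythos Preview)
Your approach has a genuine gap at the universal-property step. You write that ``$\widetilde S_{12}\to S\times\A^1$ inverts both $I_1$ (as $\widetilde S_1\to S\times\A^1$ already does) and $I_2$,'' but neither of these claims holds. First, $\widetilde S_1$ has no natural morphism to $S\times\A^1$: it is the \emph{zero fiber} of the blowup $\widehat\Sigma_1\to S\times\A^1$, so its image lies entirely in $S\times\{0\}$. Second, and more seriously, the inverse-image ideal $\sigma_1^{-1}I_1\cdot\OO_{\widetilde S_1}$ is \emph{not} invertible on $\widetilde S_1$. On the additional component $\widetilde S_1^{add}$, the canonical morphism $\sigma_1$ factors through $V(I_1)\hookrightarrow S$, so every section of $I_1$ pulls back to zero and the inverse-image ideal is the zero ideal there. (What \emph{is} invertible on $\widetilde S_1$ is the restriction of the exceptional ideal $(\sigma\!\!\!\sigma_1')^{-1}(I_1[t]+(t))\cdot\OO_{\widehat\Sigma_1'}$ from the ambient blowup; this is a different sheaf.) Consequently the universal property of $\mathrm{Bl}_{(I_1I_2)[t]+(t)}(S\times\A^1)$ gives you nothing, and the identification of $\widetilde S_{12}$ with a one-parameter Proj in $I_1I_2$ is unjustified. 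Your citation of the square~(\ref{interch}) does not help either: that diagram reorders a $\delta$ with a subsequent $\sigma^0$ inside a fixed resolution of a single sheaf, not two independent blowup centers.

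The paper avoids this by never working directly with the reducible schemes $\widetilde S_i$. Instead it passes to a two-parameter base $T\times T$ (with $T=\Spec k[t]$) and blows up the \emph{integral} schemes $\widehat\Sigma_1,\widehat\Sigma_2$ over $T\times S$; on these the iterated-blowup identity $\widehat\Sigma_{12}=\widehat\Sigma_{21}$ \emph{does} follow cleanly from universal properties (diagram~(\ref{gldiamond})). The admissible schemes $\widetilde S_{12},\widetilde S_{21}$ then arise as the special fibers over $(0,0)\in T\times T$, and a flatness argument along one-parameter lines in $T\times T$ transfers the identification (and the Hilbert-polynomial uniformity, hence equidimensionality) down to them. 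Your flatness idea for equidimensionality is sound, but the isomorphism itself needs this lift to the integral ambient family; the commutativity $I_1I_2=I_2I_1$ enters only there.
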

\begin{proof}
First we prove that $\widetilde S_{12}\cong \widetilde S_{21}$, и
and that these schemes can be included into flat families with a
general fiber  isomorphic to $S$, or to $\widetilde S_1$, or to
 $\widetilde S_2$. This implies that all the components of the scheme
$\widetilde S_{12}$ have dimension not bigger then $\dim S$. Then we will
give the scheme-theoretic characterization of the scheme $\widetilde
S_{12}$. It proves that  $\widetilde S_{12}$ is an equidimensional
scheme. Namely, all reduced schemes corresponding to its
components have dimension equal to $\dim S$.

Let $T= \Spec k[t].$ Turn to the trivial 2-parameter family of
schemes  $T\times T \times S$ with natural projections $T \times S
\stackrel{p_{13}}{\longleftarrow}T\times T\times S
\stackrel{p_{23}}{\longrightarrow} T\times S$ onto the product of $1^{\mathrm{st}}$ and $3^{\mathrm{rd}}$ factors  and onto the product of $2^{\mathrm{nd}}$ and $3^{\mathrm{rd}}$ factors respectively. Introduce the
notations $\I_1:=\OO_T \boxtimes I_1 \subset \OO_{T \times S},$
$\I_2:= \OO_T \boxtimes I_2\subset \OO_{T\times S}$. Form inverse
images  $p_{13}^{\ast }\I_1$ and $p_{23}^{\ast} \I_2$. These are
sheaves of ideals on the scheme $T\times T\times S.$ Consider the
morphism $$\sigma \!\!\! \sigma_1 \times \id_T \colon \widehat \Sigma_1
\times T \to T\times T \times S$$ with an identity map on the second
factor. Form a sheaf of ideals $(\sigma \!\!\! \sigma_1 \times
\id_T)^{-1} p_{23}^{\ast}\I_2 \cdot \OO_{\widehat \Sigma_1 \times
T}$  and consider the corresponding morphism of blowing up $\sigma\!\!\! \sigma_{12} \colon
\Sigma \!\!\! \Sigma_{12} \to \widehat \Sigma_1 \times T$. Now
restrict the sheaf $(\sigma \!\!\! \sigma_1 \times \id_T)^{-1}
p_{23}^{\ast}\I_2 \cdot \OO_{\widehat \Sigma_1 \times T}$ on the
fiber  of the composite map $$\widehat \Sigma_1 \times T
\stackrel{\sigma\!\!\! \sigma_1 \!\times
\id_T}{-\!\!-\!\!\!\longrightarrow} T\times T\times S
\stackrel{p_{12}}{\longrightarrow} T\times T$$ over a closed point~$(t_1,
t_2).$ Let $\widetilde i \colon \widetilde S_1 \hookrightarrow \widehat
\Sigma_1 \times T$ be the morphism of the closed immersion of this fiber.
Commutativity of the diagram
\begin{equation*}\xymatrix{\widehat \Sigma_1 \times T
\ar[rr]^{\sigma \!\!\!\sigma_1 \times \id_T} &&T\times T\times S
\ar[rr]^{p_{12}}&& T\times T\\
\widetilde S_1 \ar@{^(->}[u]^{\widetilde i} \ar[rr]^{\sigma_1}&& S
\ar@{^(->}[u]_i \ar[rr]&& (t_1, t_2)\ar@{^(->}[u]}
\end{equation*}
leads to the equalities $\widetilde i^{-1}((\sigma \!\!\! \sigma_1 \times
\id_T)^{-1} p_{23}^{\ast} \I_2 \cdot \OO_{\widehat \Sigma_1 \times
T})\cdot \OO_{\widetilde S_1}=\sigma_1^{-1}
i^{-1}(p_{23}^{\ast}\I_2)\cdot \OO_{\widetilde S_1} =\sigma_1^{-1}
I_2 \cdot \OO_{\widetilde S_1}$.

Now consider an immersion of the line $j_T\colon T \hookrightarrow T
\times T$ which is fixed by the equation $at_1+bt_2+c=0$, $a,b,c \in k$.
The corresponding fibered diagram
\begin{equation}\label{famil}\xymatrix{\Sigma \!\!\! \Sigma_{12}
\ar[r]^{\!\!\sigma \!\!\! \sigma_{12}\;\;}&\widehat \Sigma_1\times
T \ar[rr]^{\sigma \!\!\! \sigma_1 \times \id_T} &&T\times T\times
S
\ar[r]^{\;\;\;\;p_{12}}&T\times T\\
\Sigma \!\!\! \Sigma_{12j} \ar@{^(->}[u]^{j_{12}} \ar[r]&\Sigma_{1j}
\ar@{^(->}[u]_{j_1} \ar[rr]^{\sigma \!\!\! \sigma'_1}&& T\times S
\ar@{^(->}[u]^{j_T \times  \id_S} \ar[r]&T\ar@{^(->}[u]_{j_T}}
\end{equation}
fixes notations. If the immersion $j_T$ does not correspond to the
case $b=0$, then $\Sigma_{1j}\simeq \widehat \Sigma _1$ and \linebreak
$j_1^{-1}((\sigma \!\!\! \sigma_1 \times \id_T)^{-1} p_{23}^{\ast}
\I_2 \cdot \OO_{\widehat \Sigma \times T})\cdot \OO_{\Sigma_{1j}}=
\sigma \!\!\! \sigma_1^{-1}\I_2 \cdot \OO_{\Sigma_1}.$ Otherwise
 (for $b=0$) we have $\Sigma_{1j}\cong T \times S.$

The morphism $\sigma_{1j} \colon \widehat \Sigma_{1j} \to \Sigma_{1j}$
of the blowing up of the sheaf of ideals $\sigma \!\!\!
\sigma_1^{-1} \I_2 \cdot \OO_{\Sigma_1}$ is included into the
commutative diagram
\begin{equation*}\xymatrix{\Sigma \!\!\! \Sigma_{12}
\ar[rr]^{\sigma\!\!\! \sigma_{12}}&& \widehat \Sigma_1 \times T\\
\widehat \Sigma_{1j} \ar@{^(->}[u] \ar[rr]^{\sigma_{1j}} && \Sigma_{1j}
\ar@{^(->}[u]_{j_1}}
\end{equation*}
By the universal property of the left hand side fibered product in
(\ref{famil}) there is a scheme morphism  \linebreak $u\colon \widehat \Sigma_{1j}\to
\Sigma \!\!\! \Sigma_{12j}$.

The morphism of blowing up  $\sigma \!\!\! \sigma'_2\colon\widehat
\Sigma_{12} \to \widehat \Sigma_1$ of the sheaf of ideals $\sigma
\!\!\! \sigma_1^{-1}\I_2 \cdot \OO_{\widehat \Sigma_1}$ is included
into the commutative diagram
\begin{equation}\label{gldiamond}\xymatrix{\widehat \Sigma_{12}
\ar[r]^{\sigma \!\!\! \sigma'_1}\ar[d]_{\sigma \!\!\! \sigma'_2}
& \widehat \Sigma_2 \ar[d]^{\sigma \!\!\! \sigma_2}\\
\widehat \Sigma_1 \ar[r]^{\sigma \!\!\! \sigma_1}& T\times S}
\end{equation}
Note that in this diagram $\sigma\!\!\!\sigma'_1$ is a morphism of
blowing up of the sheaf of ideals $\sigma \!\!\! \sigma_2^{-1}\I_1
\cdot \OO_{\widehat \Sigma_2}$ and it follows that  $\widehat
\Sigma_{21} =\widehat \Sigma_{12}$. Also $\widehat \Sigma_1,$
$\widehat \Sigma_2$, and $\widehat \Sigma_{12}$ are the schemes which are obtained from $\Sigma$ by one or two consequent blowing up morphisms  and then each of them consists of the same number of components as $\Sigma$ does. Also if $\Sigma'_i$ denotes the scheme which is obtained by removing  the fiber  containing an exceptional divisor of corresponding $\sigma\!\!\!\sigma_i,$ $i=1,2,$  then $\widehat \Sigma_i:= \overline{\Sigma'_i}$. The closure in the latter expression is taken in a $T$-based relative projective space $\P_{T,i}$ with a closed immersion $T$-morphism  $\P_{T,i}\hookleftarrow \widehat \Sigma_i$. Then $\widehat \Sigma_i$ is a $T$-flat scheme. The same reasoning and the same conclusion are true for $\widehat \Sigma_{12}$. Each of the schemes $\widehat \Sigma_1,$ $\widehat \Sigma_2,$  and $\widehat \Sigma_{12}$ is fibered over the regular one-dimensional base $T$ with fibers being isomorphic to projective schemes. Hence \cite[ch. III, Proposition 9.8]{Hart} the schemes $\widehat \Sigma_1,$ $\widehat \Sigma_2$, and $\widehat \Sigma_{12}$ are flat families of projective schemes over $T$. Each of these families has a fiber  which is isomorphic to the
scheme $S$ at general enough closed point of $T$. This implies that
each fiber  of the family $\widehat \Sigma _{12}$ has a form of
projective spectrum $\Proj \bigoplus_{s\ge
0}(I[t]+(t))^s/(t)^{s+1}$ for an appropriate sheaf of ideals
$I\subset \OO_{S}$. Fibers of a flat family of projective schemes
admit polarizations with the following property. Hilbert polynomials
of fibers computed with respect to these polarizations remain
uniform over the base. By the construction, such polarizations on
fibers of schemes $\widehat \Sigma_1,$ $\widehat \Sigma_2$, and
$\widehat \Sigma_{12}$ are exactly the same as polarizations
computed in (\ref{polarizfam}), (\ref{polarizfib}).

Now we prove that  $\Sigma \!\!\!\Sigma_{12j}$ is a family of
schemes which is flat over $T$. Consider the exact $\OO_{\widehat \Sigma_1
\times T}$-triple induced by the sheaf of ideals $(\sigma \!\!\!
\sigma_1 \times \id_T)^{-1} p_{23}^{\ast}\I_2 \cdot \OO_{\widehat
\Sigma_1 \times T}$:
$$
0\to (\sigma \!\!\! \sigma_1 \times \id_T)^{-1} p_{23}^{\ast}\I_2
\cdot \OO_{\widehat \Sigma_1 \times T} \to \OO_{\widehat \Sigma_1
\times T} \to \OO_Z \to 0
$$
for the corresponding closed subscheme  $Z$. Apply 
$j_1^{\ast}$ and note that there is an isomorphism \linebreak $j_1^{-1}((\sigma
\!\!\! \sigma_1 \times \id_T)^{-1} p_{23}^{\ast}\I_2 \cdot
\OO_{\widehat \Sigma_1 \times T}) \cdot \OO_{\Sigma_{1j}}\cong j_1^{\ast}(\sigma \!\!\!
\sigma_1 \times \id_T)^{-1} p_{23}^{\ast}\I_2 \cdot \OO_{\widehat
\Sigma_1 \times T}/\tors,$ where the torsion subsheaf is given by the
equality $$\tors= \TTor_1^{j_1^{-1}\OO_{\widehat \Sigma_1 \times
T}} (j_1^{-1} \OO_Z, \OO_{\Sigma_{1j}}).$$ Note that
$\Sigma_{1j}\cong \widehat \Sigma_1$ and $j_1^{\ast}\OO_{\widehat
\Sigma_1 \times T}\cong \OO_{\Sigma_{1j}}.$ With the latter two
isomorphisms taken into account we have 
$\TTor_1^{j_1^{-1} \OO_{\widehat \Sigma_1 \times T}}(j_1^{-1}
\OO_Z, \OO_{\Sigma_{1j}})=
\TTor_1^{\OO_{\Sigma_{1j}}}(j_1^{-1}\OO_Z, \OO_{\Sigma_{1j}})=0.$
Then  $$j_1^{\ast}(\sigma \!\!\! \sigma_1 \times \id_T)^{-1}
p_{23}^{\ast}\I_2 \cdot \OO_{\widehat \Sigma_1 \times T}=
j_1^{-1}((\sigma \!\!\! \sigma_1 \times \id_T)^{-1}
p_{23}^{\ast}\I_2 \cdot \OO_{\widehat \Sigma_1 \times T}) \cdot
\OO_{\Sigma_{1j}}=\sigma \!\!\! \sigma_1^{-1} \I_2 \cdot
\OO_{\Sigma_1}.$$ Also for the blowing up morphisms one has $$\Sigma \!\!\!
\Sigma_{12j}=\Proj \bigoplus_{s\ge 0} (j_1^{\ast}(\sigma \!\!\!
\sigma_1 \times \id_T)^{-1} p_{23}^{\ast}\I_2 \cdot \OO_{\widehat
\Sigma_1 \times T})^s= \Proj \bigoplus_{s\ge 0} (\sigma \!\!\!
\sigma_1^{-1} \I_2 \cdot \OO_{\Sigma_1})^s=\widehat \Sigma_{12}.$$
Since $\widehat \Sigma_{12}$ is a flat family over $T$ then the
scheme $\Sigma \!\!\! \Sigma_{12j}$ is also flat over $T$.

Any two points on  $T\times T$ can be connected by a chain of two
lines satisfying the condition $b\ne 0.$ Then Hilbert polynomials
of fiber s of the scheme $\Sigma \!\!\! \Sigma_{12} \to T\times T$
are constant over the base  $T\times T$ which is an integral scheme. Hence the scheme
$\Sigma\!\!\! \Sigma_{12}$ is flat over the base $T\times T.$

To characterize the scheme structure of the special fiber  of the
scheme $\Sigma_{12}$ (and consequently the corresponding fiber  of
the scheme  $\Sigma \!\!\! \Sigma_{12}$) it is enough to consider
the embedding  $j_T$ defined by the equation $t_2=0,$ and a
subscheme $\widetilde \Sigma _1=j_1(\Sigma_{1j})$. It is a flat
family of subschemes with fiber  $\widetilde S_1 =\Proj
\bigoplus_{s\ge 0} (I_1[t]+(t))^s/(t)^{s+1}$. As proven before,
the preimage  $\widetilde \Sigma_{12}= \sigma \!\!\!
\sigma_{12}^{-1}(\widetilde \Sigma_1)$ is also flat over $j_T(T)
\cong T$ with generic fiber  isomorphic to $\widetilde S_1=\Proj
\bigoplus_{s\ge 0} (I_1[t]+(t))^s/(t)^{s+1}$. Applying the reasoning 
of the article \cite{Tim4} in this situation, we obtain that
the special fiber $\widetilde S_{12}$ of the scheme  $\widetilde
\Sigma_{12}$ has the following scheme-theoretic characterization:
$\widetilde S_{12}= \Proj \bigoplus_{s\ge
0}(I'_2[t]+(t))^s/(t)^{s+1}$ for the sheaf of ideals $I'_2 \subset
\OO_{\widetilde S_1}$ defined as $I'_2=\sigma_1^{-1}I_1 \cdot
\OO_{\widetilde S_1}.$
\end{proof}

Hence, for any two schemes $$\widetilde S_1=\Proj \bigoplus_{s\ge
0}(I_1[t]+(t))^{s}/(t)^{s+1}$$ and $$\widetilde S_2=\Proj
\bigoplus_{s\ge 0}(I_2[t]+(t))^{s}/(t)^{s+1}$$ the scheme
$\widetilde S_{12}=\Proj \bigoplus_{s\ge
0}(I'_1[t]+(t))^{s}/(t)^{s+1}=\Proj \bigoplus_{s\ge
0}(I'_2[t]+(t))^{s}/(t)^{s+1}$ is defined. It has two  morphisms
$\widetilde S_1\stackrel{\sigma'_1}{\longleftarrow} \widetilde
S_{12} \stackrel{\sigma'_2}{\longrightarrow} \widetilde S_2$ such
that the diagram
\begin{equation*}\xymatrix{\widetilde S_{12}
\ar[r]^{\sigma'_2}\ar[d]_{\sigma'_1}& \widetilde S_2
\ar[d]^{\sigma_2}\\
\widetilde S_1 \ar[r]_{\sigma_1}&S}
\end{equation*}
commutes. Since in our (higher-dimensional) case the resolution consists of several 
consequent morphisms $\sigma_i$, then Proposition \ref{coversch} is to be applied iteratively.
A binary operation $(\widetilde S_1, \widetilde S_2) \mapsto
\widetilde S_1 \diamond \widetilde S_2=\widetilde S_{12}$ defined
in this fashion is obviously associative. Moreover,  for any
morphism $\sigma \colon \widetilde S\to S$ there are
equalities $\widetilde S \diamond S =S \diamond \widetilde S
=\widetilde S$. Then canonical morphisms $\sigma \colon \widetilde S \to S$ 
arising in resolution of sheaves in each class $[E]$ of
S-equivalent semistable coherent sheaves generate a commut\-ative
monoid  $\diamondsuit[E]$ with the binary operation $\diamond$ and the 
neutral element $\id_S \colon S \to S.$

Note that by Proposition  \ref{ssc} and Remark \ref{sscr} there is
a bijective correspondence among subsheaves of a coherent
$\OO_S$-sheaf $E$ and subsheaves of the corresponding locally free
$\OO_{\widetilde S}$-sheaf $\widetilde E$ arising in the resolution of $E$. 
This correspondence pre\-serves Hilbert polynomials. Let there is a fixed
Jordan-H\"{o}lder filtration in $E$ formed by subsheaves  $F_i$.
Then there is a sequence of semistable subsheaves $\widetilde F_i$
with the same reduced Hilbert polynomial and $\rank \widetilde
F_i=\rank F_i$ distinguished in $\widetilde E$ by the described
correspondence.

Let $X$ be a projective scheme, $L$ an ample invertible
$\OO_X$-sheaf, $E$  a coherent $\OO_X$-sheaf. Let the sheaf
$E\otimes L^{m}$ be globally generated, namely, there is an evaluation
epimorph\-ism $q\colon H^0(X, E\otimes L^{m}) \otimes L^{ (-m)}
\twoheadrightarrow E.$ Fix a subspace $H \subset H^0(X, E\otimes
L^{m}).$ A subsheaf $F\subset E$ is said to be {\it generated by
the subspace $H$} if it is an image of the composite map  $H
\otimes L^{(-m)}\subset H^0(X, E\otimes L^{ m}) \otimes L^{
(-m)}\stackrel{q} {\twoheadrightarrow }E.$

\begin{proposition} \label{corr} The transformation $E \mapsto
\sigma^{\ast}E /\tors$ is compatible on all the sub\-sheaves $F \subset
E$ with the isomorphism  $\upsilon \colon H^0(\widetilde S, \widetilde E \otimes
\widetilde L^{m}) \stackrel{\sim}{\to} H^0(S,E\otimes L^{m})$ for all $m\gg 0.$
\end{proposition}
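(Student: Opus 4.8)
The plan is to track carefully what the isomorphism $\upsilon$ does on the level of subspaces of global sections. By Proposition \ref{sectionsrel} the isomorphism $\upsilon\colon H^0(\widetilde S,\widetilde E\otimes\widetilde L^m)\stackrel{\sim}{\to}H^0(S,E\otimes L^m)$ (for $m\gg0$) was constructed by restriction to the common open subset where $\sigma$ is an isomorphism: both groups of sections inject into $H^0(S_{0t},(E\otimes L^m)|_{S_{0t}})=H^0(\widetilde S_{0t},(\widetilde E\otimes\widetilde L^m)|_{\widetilde S_{0t}})$, and $\upsilon$ is the map identifying the two images. So the first step is to make precise what "compatible on all subsheaves" means: given a subsheaf $F\subset E$ with $F\otimes L^m$ globally generated, let $V_F=H^0(S,F\otimes L^m)\subset H^0(S,E\otimes L^m)$ be the subspace generating $F$; I want to show that $\upsilon^{-1}(V_F)=H^0(\widetilde S,\widetilde F\otimes\widetilde L^m)$, where $\widetilde F=\sigma^\ast F/\tors\subset\widetilde E$ is the corresponding subsheaf under the bijection of Remark \ref{sscr}. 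Equivalently, the subsheaf of $\widetilde E$ generated by $\upsilon^{-1}(V_F)$ is exactly $\widetilde F$.

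The key steps, in order, are as follows. First, since the standard resolution is functorial with respect to inclusions of sheaves (it is defined via locally free resolutions and the formation of $\tors$, and by Remark \ref{sscr} it sends $F\subset E$ to $\widetilde F\subset\widetilde E$ with the same Hilbert polynomial and rank), Proposition \ref{sectionsrel} applies to $F$ as well as to $E$, giving a commutative square relating $\upsilon_E$ and $\upsilon_F$ via the inclusions $H^0(S,F\otimes L^m)\hookrightarrow H^0(S,E\otimes L^m)$ and $H^0(\widetilde S,\widetilde F\otimes\widetilde L^m)\hookrightarrow H^0(\widetilde S,\widetilde E\otimes\widetilde L^m)$; this commutativity is immediate because both $\upsilon$'s are defined by the same restriction-to-$S_{0t}$ recipe and $F$, $E$ agree with $\widetilde F$, $\widetilde E$ over the locus where $\sigma$ is an isomorphism. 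Second, I invoke the sections-coincidence already used in the proof of Proposition \ref{sectionsrel}: $V_F=H^0(S,F\otimes L^m)$ and its image in $H^0(S_{0t},\cdot)$ determine each other (restriction is injective for $m\gg0$), and likewise on the $\widetilde S$ side, so the two subspaces correspond under $\upsilon$. Third, to conclude that these subspaces generate $\widetilde F$ rather than merely some subsheaf with the same sections, I use that $\widetilde E\otimes\widetilde L^m$ and $\widetilde F\otimes\widetilde L^m$ are globally generated for $m\gg0$ (as in the proof of Proposition \ref{ssc}), so a subsheaf of $\widetilde E$ is recovered from the subspace of $H^0$ that generates it; combined with $H^0(\widetilde S,\widetilde F\otimes\widetilde L^m)=\upsilon^{-1}(V_F)$ this gives the claim.

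The main obstacle I anticipate is the third step: one must be careful that passing to $\sigma^\ast$ and quotienting by $\tors$ does not create or destroy global sections, i.e. that $\widetilde F\otimes\widetilde L^m$ has exactly the sections $\upsilon^{-1}(V_F)$ and no more. This is where the particular choice of the distinguished polarization $\widetilde L$ enters — it is engineered (see (\ref{polshort})) precisely so that $H^0(\widetilde S,\widetilde E\otimes\widetilde L^m)\cong H^0(S,E\otimes L^m)$ via $\sigma$, and the same computation, applied to $F$ in place of $E$ (note $\widetilde F=\sigma^\ast F/\tors$ is again locally free on $\widetilde S$ and arises from the standard resolution of $F$), yields $H^0(\widetilde S,\widetilde F\otimes\widetilde L^m)\cong H^0(S,F\otimes L^m)$ compatibly. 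So the real content is just checking that Proposition \ref{sectionsrel} is natural in the sheaf, which reduces to the observation that the canonical isomorphism $\sigma|_{\widetilde S_0}$ over the principal component is one and the same for $E$ and for all its subsheaves $F$. Once that naturality is recorded, the compatibility asserted in Proposition \ref{corr} is a formal consequence, and the identification $V_F=H^0(S,F\otimes L^m)$ together with $\upsilon^{-1}(V_F)=H^0(\widetilde S,\widetilde F\otimes\widetilde L^m)$ completes the argument.
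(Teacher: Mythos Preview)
Your argument has a genuine gap at the key step. You claim that ``Proposition \ref{sectionsrel} applies to $F$ as well as to $E$,'' but that proposition is about the standard resolution of a given sheaf, and the admissible scheme it produces depends on that sheaf: the standard resolution of $F$ yields a scheme $\widetilde S_F$ which is in general different from $\widetilde S$ (the scheme coming from $E$). What you need is an isomorphism $H^0(\widetilde S,(\sigma^\ast F/\tors)\otimes\widetilde L^m)\cong H^0(S,F\otimes L^m)$ for $\sigma^\ast F/\tors$ living on the \emph{same} $\widetilde S$, and this is not what Proposition \ref{sectionsrel} provides. Your attempted justification via ``same restriction-to-$S_{0t}$ recipe'' only gives two injections into $H^0(U,F\otimes L^m|_U)$; to conclude the images coincide one must know the two sides have the same dimension, which is essentially the Hilbert-polynomial equality you are trying to establish. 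Similarly, the assertion that $(\sigma^\ast F/\tors)\otimes\widetilde L^m$ is globally generated for the \emph{fixed} $m$ of $\upsilon$, uniformly in $F$, is not justified (and there is no reason to expect $\sigma^\ast F/\tors$ to be locally free on $\widetilde S$, since $\widetilde S$ was designed to resolve $E$, not $F$).

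The paper's proof avoids these issues by a different mechanism. It first observes that there are \emph{two} natural ways to generate a subsheaf of $\widetilde E$ from $\upsilon^{-1}H^0(S,F\otimes L^m)$: via $\otimes\,\sigma^\ast L^{-m}$ (which yields $\sigma^\ast F/\tors$) and via $\otimes\,\widetilde L^{-m}$ (which yields, by definition, $\widetilde F$). The inclusion of invertible sheaves $\sigma^\ast L^{-m}\subset\widetilde L^{-m}$ immediately gives $\sigma^\ast F/\tors\subset\widetilde F$. The reverse inclusion is then obtained in two steps: first one restricts to the principal component $\widetilde S^0$, where coincidence of sections over the open locus forces equal Hilbert polynomials and hence equality $\sigma^{0\ast}F/\tors=\widetilde F|_{\widetilde S^0}$; then one invokes the quasi-ideal structure $\widetilde E=(\delta^0_{[\ell]})^\ast(\widetilde E|_{\widetilde S^0})$ to pull this equality back to all of $\widetilde S$. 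This use of the decomposition $\sigma=\sigma^0\circ\delta$ and of quasi-ideality is precisely the missing ingredient in your proposal.
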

\begin{proof} Take an arbitrary subsheaf $F\subset E$
of rank $r'$. It is necessary to check that the subsheaf $\widetilde
F\subset \widetilde E=\sigma^{\ast}E /\tors \!$, which is generated in
 $\widetilde E$ by the subspace $\upsilon^{-1}
H^0(S, F\otimes L^{m})$, coincides with the subsheaf
$\sigma^{\ast}F /\tors \!.$

It is known that the sheaf $\sigma^\ast E/\tors$ is generated by the vector space $H^0(\widetilde S, \widetilde E \otimes L^m)=\upsilon^{-1}H^0(S, E\otimes L^m)$, i.e. there is an epimorphism of $\OO_{\widetilde S}$-modules 
$$
\upsilon^{-1}H^0(S, E\otimes L^m) \otimes \widetilde L^{-m} \twoheadrightarrow \sigma^\ast E/\tors \!.
$$

Also we  know that the subsheaf $\sigma^{\ast} F/\tors \!\subset \sigma^\ast E/\tors$ is included into the commutative diagram 
\begin{equation}\label{d1}
\xymatrix{\upsilon^{-1}H^0(S, E\otimes L^m) \otimes \sigma^\ast L^{-m} \ar@{->>}[r] &\sigma^\ast E/\tors\\
\upsilon^{-1}H^0(S, F\otimes L^m) \otimes \sigma^\ast L^{-m} \ar@{^(->}[u]\ar@{->>}[r] &\sigma^\ast F/\tors \ar@{^(->}[u]}
\end{equation}
as well as the subsheaf $\widetilde F \subset \widetilde E$ is included into the commutative diagram 
\begin{equation}\label{d2}
\xymatrix{\upsilon^{-1}H^0(S, E\otimes L^m) \otimes \widetilde L^{-m} \ar@{->>}[r] &\widetilde E\\
\upsilon^{-1}H^0(S, F\otimes L^m) \otimes \widetilde L^{-m} \ar@{^(->}[u]\ar@{->>}[r] &\widetilde F \ar@{^(->}[u]}
\end{equation}
Combining (\ref{d1},\ref{d2}) with an inclusion of invertible sheaves $\sigma^\ast L^{-m} \subset \widetilde L^{-m}$, we come to the inclusion $\sigma^\ast F/\tors \subset \widetilde F$ and the commutative diagram
\begin{equation}\label{*}
\xymatrix{&\ar@{^(->}[ld]\upsilon^{-1}H^0(S, E\otimes L^m) \otimes \sigma^\ast L^{-m} \ar@{->>}[rr] &&\ar@{=}[ld]\sigma^\ast E/\tors\\
\upsilon^{-1}H^0(S, E\otimes L^m) \otimes \widetilde L^{-m} \ar@{->>}[rr] &&\widetilde E\\
&\ar@{^(->}[ld]\upsilon^{-1}H^0(S, F\otimes L^m) \otimes \sigma^\ast L^{-m} \ar@{^(->}[uu]\ar@{->>}[rr] &&\ar@{^(->}[ld]\sigma^\ast F/\tors \ar@{^(->}[uu]\\
\upsilon^{-1}H^0(S, F\otimes L^m) \otimes \widetilde L^{-m} \ar@{^(->}[uu]\ar@{->>}[rr] &&\widetilde F \ar@{^(->}[uu]}    
\end{equation}

Now consider global sections of sheaves $\sigma^\ast F/\tors \otimes \widetilde L^s$ and $\widetilde F \otimes \widetilde L^s$ for $s\gg 0$. They coincide along any open subscheme  $\widetilde U \subset \widetilde S$ such that $\sigma \colon \widetilde S \to S$
becomes an isomorphism $\sigma|_{\widetilde U}\colon\widetilde U \stackrel{\sim}{\longrightarrow} U$ being restricted to $\widetilde U$. Then we have an isomorphism $$H^0(\widetilde S^0, \sigma^\ast F/\tors \otimes \widetilde L^s|_{\widetilde S^0}) \cong H^0(\widetilde S^0, \widetilde F \otimes \widetilde L^s|_{\widetilde S^0})$$ for all $s\gg 0$. Hence, an inclusion $\sigma^{0\ast} F/\tors \subset \widetilde F|_{\widetilde S^0}$ is an inclusion of sheaves with equal Hilbert polynomials. From this we conclude that the sheaves $\sigma^{0\ast} F/\tors$ and $\widetilde F|_{\widetilde S^0}$ are equal along the principal component $\widetilde S^0$, i.e. 
$\sigma^{0\ast}F/\tors =\widetilde F|_{\widetilde S^0}$.
Also we have an analog of the commutative  diagram (\ref{*}) for the principal component~$\widetilde S^0$:
\begin{equation}\label{**}
\xymatrix{&\ar@{^(->}[ld]\upsilon^{-1}H^0(S, E\!\otimes \!L^m) \!\otimes \!\sigma^{0\ast} L^{-m} \ar@{->>}[rr] &&\ar@{=}[ld]\sigma^{0\ast} E/\tors\!\\
\upsilon^{-1}H^0(S, E\!\otimes \! L^m) \!\otimes \!\widetilde L^{-m}|_{\widetilde S^0} \ar@{->>}[rr] &&\widetilde E|_{\widetilde S^0}\\
&\ar@{^(->}[ld]\upsilon^{-1}H^0(S, F\!\otimes \!L^m) \!\otimes \!\sigma^{0\ast} L^{-m} \ar@{^(->}[uu]\ar@{->>}[rr] &&\ar@{=}[ld]\sigma^{0\ast} F/\tors \! \ar@{^(->}[uu]\\
\upsilon^{-1}H^0(S, F\!\otimes \!L^m)\! \otimes \!\widetilde L^{-m}|_{\widetilde S^0} \ar@{^(->}[uu]\ar@{->>}[rr] &&\widetilde F|_{\widetilde s^0} \ar@{^(->}[uu]}    
\end{equation}
Now  we take into account that $\widetilde E=\delta^\ast (\widetilde E|_{\widetilde S^0})$, bring together the evaluation morphism $$\upsilon ^{-1} H^0(S, E\otimes L^m)\otimes \widetilde L^{-m} \twoheadrightarrow \widetilde E$$ and the equality $\delta^\ast (\sigma^{0\ast} F/\tors)=\sigma^\ast F/\tors$, and then we come from (\ref{**}) to the diagram:
\begin{equation}\label{***} \xymatrix{&\ar@{^(->}[ld]\upsilon^{-1}H^0(S, E\otimes L^m) \otimes \sigma^{\ast} L^{-m} \ar@{->>}[rr] &&\ar@{=}[ld]\widetilde E\\
\upsilon^{-1}H^0(S, E\otimes L^m) \otimes \widetilde L^{-m} \ar@{->>}[rr] &&\widetilde E\\
&\ar@{^(->}[ld]\upsilon^{-1}H^0(S, F\otimes L^m) \otimes \sigma^\ast L^{-m} \ar@{^(->}[uu]\ar@{->>}[rr] &&\ar@{=}[ld]\sigma^{\ast} F/\tors \ar@{^(->}[uu]\\
\upsilon^{-1}H^0(S, F\otimes L^m) \otimes \widetilde L^{-m} \ar@{^(->}[uu]\ar@{->>}[rr] &&\delta^\ast (\widetilde F|_{\widetilde S^0}) \ar@{^(->}[uu]} 
\end{equation}
From the left hand side rectangle of (\ref{***}) it follows that $\delta^\ast \widetilde F|_{\widetilde S^0}$ is the image of the sheaf \linebreak $\upsilon^{-1}H^0(S, F\otimes L^m) \otimes \widetilde L^{-m}$ under the evaluation morphism generating $\widetilde E$ but this image is exactly $\widetilde F,$ i.e. $\delta^\ast (\widetilde F|_{\widetilde S^0})=\widetilde F.$ This leads to the equality $\widetilde F = \sigma^\ast F/\tors$ as required. This completes the proof.
\end{proof}
We return to a Jordan--H\"{o}lder filtration $$ 0=F_0 \subset F_1 \subset \dots \subset F_s =E
$$
of a semistable sheaf $E$.

From now $\sigma$ is understood as a composite morphism $\sigma=\sigma_1\circ \dots \sigma_\ell$, i.e. it is a structure morphism of an admissible scheme $\widetilde S$.
\begin{corollary} \label{sssh} Each of the sheaves $\widetilde F_j=\sigma^{\ast}
F_j /\tors,$ $j=1,\dots,s,$ is semistable of rank $r_j=\rank F_j$, with reduced
Hilbert polynomial equal to $p_E(n)$.
\end{corollary}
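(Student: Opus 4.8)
The plan is to transport the Jordan--Hölder data of $E$ through the standard resolution term by term, using the machinery already assembled above. First I would invoke Proposition \ref{corr}, which says precisely that the operation $F \mapsto \sigma^{\ast}F/\tors$ is compatible with the isomorphism $\upsilon\colon H^0(\widetilde S,\widetilde E\otimes\widetilde L^m)\xrightarrow{\sim}H^0(S,E\otimes L^m)$ on all subsheaves $F\subset E$. Applying this to each member $F_j$ of the Jordan--Hölder filtration $0=F_0\subset F_1\subset\cdots\subset F_s=E$, I get that $\widetilde F_j=\sigma^{\ast}F_j/\tors$ is exactly the subsheaf of $\widetilde E$ generated by the subspace $\upsilon^{-1}H^0(S,F_j\otimes L^m)\subset H^0(\widetilde S,\widetilde E\otimes\widetilde L^m)$. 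In particular the $\widetilde F_j$ form an increasing chain of subsheaves of $\widetilde E$ (functoriality of $\sigma^{\ast}(-)/\tors$ on inclusions, together with the commutative diagrams of type (\ref{*}) in the proof of Proposition \ref{corr}, make the inclusions $\widetilde F_{j-1}\subset\widetilde F_j$ automatic).

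Next I would pin down the numerical invariants. Since $\sigma$ is an isomorphism over a dense open $\widetilde U\subset\widetilde S$ (the locus over which $E$ is locally free), $F_j$ and $\widetilde F_j$ agree there, so $\rank\widetilde F_j=\rank F_j=r_j$. For the Hilbert polynomial I would appeal to Proposition \ref{sectionsrel} and Remark \ref{sscr}: the bijection between subsheaves of $E$ and subsheaves of $\widetilde E$ furnished by Proposition \ref{ssc} preserves Hilbert polynomials, hence $\chi(\widetilde F_j\otimes\widetilde L^n)=\chi(F_j\otimes L^n)$ for all $j$, and in particular the reduced Hilbert polynomial of $\widetilde F_j$ equals that of $F_j$, which is $p_E(n)$ by the defining property of a Jordan--Hölder filtration. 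Semistability of $\widetilde F_j$ then follows from Proposition \ref{ssc} applied to $F_j$ in place of $E$: each $F_j$ is semistable (being an iterated extension of stable sheaves $\gr_i(E)$ all having the same reduced Hilbert polynomial $p_E(n)$), and its standard resolution $\sigma^{\ast}F_j/\tors=\widetilde F_j$ is therefore semistable on $\widetilde S$.

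There is a small subtlety I would address explicitly rather than gloss over: Proposition \ref{ssc} is stated for the standard resolution of a sheaf on $S$, so I must check that $\sigma^{\ast}F_j/\tors$ really is obtained from $F_j$ by \emph{its own} standard resolution along the same tower $\widetilde S=S_\ell\to\cdots\to S$ — or at least that the hypotheses of Proposition \ref{ssc} still apply. This is where Proposition \ref{corr} does the real work: it guarantees $\widetilde F_j=\sigma^{\ast}F_j/\tors$ as sheaves on $\widetilde S$ (not merely up to the open locus), so the restriction $\widetilde F_j|_{\widetilde S^0}=\sigma^{0\ast}F_j/\tors$ and the relation $\widetilde F_j=\delta^{\ast}(\widetilde F_j|_{\widetilde S^0})$ hold, which are exactly the quasi-ideality and local-freeness inputs needed to run the argument of Proposition \ref{ssc}. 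I expect this compatibility check — verifying that each filtration step behaves under the \emph{iterated} sequence of blowups and contractions as it does under a single step — to be the main obstacle, though it is essentially bookkeeping on top of Propositions \ref{ssc} and \ref{corr}; once it is in place the semistability, rank, and Hilbert-polynomial assertions are immediate.
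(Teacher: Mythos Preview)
Your proposal is correct and follows essentially the same route as the paper. The paper's proof is much terser: it writes out only the rank argument (via $\sigma$ being an isomorphism on an open subscheme) and the Hilbert polynomial computation
\[
\chi(\widetilde F_j\otimes\widetilde L^n)=h^0(\widetilde S,\widetilde F_j\otimes\widetilde L^n)=h^0(S,F_j\otimes L^n)=\chi(F_j\otimes L^n)=r_j p_E(n),
\]
and leaves the semistability claim implicit.

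One remark on the ``subtlety'' you raise. You invoke Proposition~\ref{ssc} as a black box applied to $F_j$ and then worry that $\sigma$ is the standard resolution for $E$, not for $F_j$. This concern is legitimate as stated, but it can be sidestepped entirely: the \emph{proof} of Proposition~\ref{ssc} uses nothing about the standard resolution except the isomorphism $\upsilon$ on global sections and the resulting $\upsilon$-correspondence between subsheaves. Once Proposition~\ref{corr} gives you $H^0(\widetilde S,\widetilde F_j\otimes\widetilde L^m)\cong H^0(S,F_j\otimes L^m)$, you can rerun that argument verbatim for $\widetilde F_j$ and its subsheaves, without ever needing $\widetilde F_j$ to be the standard resolution of $F_j$ in its own right. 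This is effectively what the paper does (implicitly), and it spares you the bookkeeping you anticipated.
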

\begin{proof} The equality of the ranks follows from the equality
$\widetilde F_j=\sigma^{\ast} F_j /\tors$ and from the fact that
the morphism  $\sigma$ is an isomorphism on some open subscheme in
$\widetilde S$. The Hilbert polynomial for all $n\gg 0$ is fixed
by the equalities $\chi (\widetilde F_j \otimes \widetilde L^{n})=
h^0(\widetilde S, \widetilde F_j \otimes \widetilde L^{ n})=
h^0(S, F_j \otimes L^{ n} )= \chi (F_j \otimes L^{ n})=r_j p_E(n).$
\end{proof}

For $\widetilde E= \sigma^{\ast} E/\tors$ consider epimorphisms
$\widetilde q \colon \!H^0(\widetilde S, \widetilde E \otimes \widetilde
L^m) \!\otimes \! \widetilde L^{(-m)}\!\twoheadrightarrow \!\widetilde E$
and  $q\colon \!H^0( S, E \otimes  L^m)\! \otimes\!
L^{(-m)}\!\twoheadrightarrow \!E$.
\begin{definition} Subsheaves  $\widetilde F \subset \widetilde E$ и $F \subset
E$ are called  {\it $\upsilon$-correspond\-ing} if there exist
subspaces  $\widetilde V \subset H^0(\widetilde S, \widetilde E
\otimes \widetilde L^m)$ and $V = \upsilon (\widetilde V)\subset
H^0( S, E \otimes L^m)$ such that  $\widetilde q(\widetilde V
\otimes \widetilde L^{-m})=\widetilde F$, $q(V \otimes L^{-m})=F$.
Notation: $F= \upsilon (\widetilde F).$ The corresponding quotient
sheaves  $\widetilde E /\widetilde F$ и $ E/F$ will be also called
$\upsilon$-{\it corresponding } and denoted
 $E/F=\upsilon(\widetilde E /\widetilde F).$\end{definition}

\begin{proposition} \label{satur} The transformation $E \mapsto
\sigma^{\ast}E /\tors$ takes saturated subsheaves to saturated
subsheaves.
\end{proposition}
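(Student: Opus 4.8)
The plan is to show that if $F \subset E$ is saturated --- meaning that the quotient $E/F$ is torsion-free (equivalently, $E/F$ has no subsheaf with smaller-dimensional support) --- then the $\upsilon$-corresponding subsheaf $\widetilde F = \sigma^{\ast} F/\tors \subset \widetilde E$ is also saturated, i.e. the quotient $\widetilde E/\widetilde F$ has no torsion in the modified sense. First I would recall from Proposition \ref{corr} that $\widetilde F = \sigma^{\ast} F/\tors$ and that, along the principal component, $\widetilde F|_{\widetilde S^0} = \sigma^{0\ast} F/\tors$, with $\widetilde F = \delta^{\ast}(\widetilde F|_{\widetilde S^0})$; the same holds for $\widetilde E$. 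So the analysis reduces to two pieces: the behaviour along the principal component $\widetilde S^0$ (which is birational to $S$ via $\sigma^0$), and the behaviour on the additional components, where torsion in the modified sense lives by construction.

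On the principal component, the point is that $\sigma^0 \colon \widetilde S^0 \to S$ is a birational morphism of integral schemes (a composite of blowups), so pulling back and quotienting by torsion is a well-behaved operation: I would argue that $(\widetilde E/\widetilde F)|_{\widetilde S^0} = \sigma^{0\ast}(E/F)/\tors$. Indeed, applying $\sigma^{0\ast}$ to the exact triple $0 \to F \to E \to E/F \to 0$ gives $\sigma^{0\ast} F \to \sigma^{0\ast} E \to \sigma^{0\ast}(E/F) \to 0$, and after killing torsion the first map becomes the inclusion $\sigma^{0\ast}F/\tors \hookrightarrow \sigma^{0\ast}E/\tors$ (this uses that $\widetilde E|_{\widetilde S^0}$ is locally free, so any subsheaf generically of full rank that is itself torsion-free embeds); hence the cokernel is $\sigma^{0\ast}(E/F)/\tors$. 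Now $E/F$ torsion-free on the nonsingular $S$ means its pullback under the birational $\sigma^0$, modulo torsion, is again torsion-free on $\widetilde S^0$ --- this is the analog of the well-known fact that $\sigma^{0\ast}(\text{torsion-free})/\tors$ is torsion-free, which holds because $E/F$ embeds in its double dual $(E/F)^{\vee\vee}$, a locally free (or at least reflexive) sheaf whose pullback is torsion-free, and the torsion of $\sigma^{0\ast}(E/F)$ maps to zero there. So $(\widetilde E/\widetilde F)|_{\widetilde S^0}$ has no torsion.

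It remains to rule out torsion of $\widetilde E/\widetilde F$ supported on (or leaking into) the additional components $\widetilde S_i$, $i>0$. Here I would invoke the quasi-ideality structure: both $\widetilde E$ and $\widetilde F$ are of the form $\delta^{\ast}(-|_{\widetilde S^0})$ by Proposition \ref{corr} and Definition \ref{semistable}, hence so is their quotient, $\widetilde E/\widetilde F = \delta^{\ast}((\widetilde E/\widetilde F)|_{\widetilde S^0})$. A sheaf of the form $\delta^{\ast}(\mathcal G)$ with $\mathcal G$ torsion-free on $\widetilde S^0$ carries no torsion in the modified sense: by the very definition of the modified torsion subsheaf $\tors$ recalled in the introduction, a section is in $\tors$ only if it is (roughly) annihilated by a prime of positive codimension and either this already happens on a genuine component or it is the restriction of a section whose trace on $\widetilde S^0$ lies in the usual torsion of $\widetilde E|_{\widetilde S^0}$ --- and we have just shown the latter is zero. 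So the modified torsion of $\widetilde E/\widetilde F$ is trivial, which is exactly saturation.

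The main obstacle I anticipate is the bookkeeping around the modified torsion on the additional components: one must check carefully that the two defining conditions for a section to lie in $\tors \subset \sigma^{\ast}E$ (the prime-of-positive-codimension condition and the ``restricts from a section with torsion trace on $\widetilde S^0$'' condition) genuinely cannot be met for $\widetilde E/\widetilde F$ once they fail on $\widetilde S^0$, using connectedness of $\widetilde S$ and that $\widetilde E/\widetilde F = \delta^{\ast}(-|_{\widetilde S^0})$. A secondary technical point is justifying that $\sigma^{0\ast}$ of the triple, after quotienting by torsion, really yields $\sigma^{0\ast}(E/F)/\tors$ as the cokernel --- this is where the nonsingularity of $S$ (so that $E/F$ has finite homological dimension and embeds in a reflexive sheaf) is used, exactly as in the proof that the standard resolution produces locally free sheaves in \cite{Tim4}, \cite{Tim12}. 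Everything else is a routine diagram chase combining Propositions \ref{sectionsrel}, \ref{ssc}, and \ref{corr}.
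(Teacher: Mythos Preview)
Your approach is genuinely different from the paper's, and it contains a circularity at the crucial step. You assert that the cokernel of $\sigma^{0\ast}F/\tors \hookrightarrow \sigma^{0\ast}E/\tors$ equals $\sigma^{0\ast}(E/F)/\tors$, but this is precisely the content of the proposition on the principal component: saying that cokernel is torsion-free is the same as saying $\widetilde F|_{\widetilde S^0}$ is saturated in $\widetilde E|_{\widetilde S^0}$. A short diagram chase shows the cokernel is $\sigma^{0\ast}(E/F)/N$, where $N$ is the image of $\tors(\sigma^{0\ast}E)$ in $\sigma^{0\ast}(E/F)$; one only has $N\subseteq \tors(\sigma^{0\ast}(E/F))$ a priori, and equality is exactly what must be shown. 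Your appeal to reflexive hulls only confirms that $\sigma^{0\ast}(E/F)/\tors$ is torsion-free (a tautology), not that the cokernel coincides with it. Indeed, in the paper the identification $\widetilde F_j/\widetilde F_{j-1}\cong \sigma^{\ast}(F_j/F_{j-1})/\tors$ is stated and proved as a \emph{corollary} of Proposition~\ref{satur}, not as an input to it. The subsequent step, that $\widetilde E/\widetilde F=\delta^{\ast}\bigl((\widetilde E/\widetilde F)|_{\widetilde S^0}\bigr)$, is also not immediate from $\widetilde E=\delta^{\ast}(\widetilde E|_{\widetilde S^0})$ and $\widetilde F=\delta^{\ast}(\widetilde F|_{\widetilde S^0})$, since $\delta^{\ast}$ is only right exact.

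The paper bypasses all of this with a short contradiction argument through the $\upsilon$-correspondence. Assume $\widetilde F_j/\widetilde F_{j-1}$ has a nonzero torsion subsheaf $\tau$; lift its generating subspace of sections to $\widetilde T'\subset H^0(\widetilde S,\widetilde F_j\otimes\widetilde L^m)$, let $\TT'\subset\widetilde F_j$ be the generated subsheaf, and push everything through $\upsilon$ to obtain $\upsilon$-corresponding subsheaves $\upsilon(\TT')\subset F_j$ and $\upsilon(\TT'\cap\widetilde F_{j-1})\subset F_{j-1}$ on $S$. Because the correspondence is the identity on the open locus where $\sigma$ is an isomorphism, the resulting quotient $\upsilon(\tau)\subset F_j/F_{j-1}$ is again supported in positive codimension, hence torsion---contradicting saturatedness of $F_{j-1}\subset F_j$. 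The key idea you are missing is to transport the hypothetical torsion back to $S$ via global sections rather than to compute the quotient on $\widetilde S$ directly.
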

\begin{proof} Let $F_{j-1}\subset F_j$ be a saturated subsheaf and let $F_{j-1}= \upsilon (\widetilde F_{j-1})$ and $F_j= \upsilon (\widetilde F_j)$.
Assume that the quotient sheaf $\widetilde F_j/ \widetilde
F_{j-1}$ has a subsheaf of torsion $\tau.$ This subsheaf is
generated by the vector subspace  $\widetilde T \subset H^0
(\widetilde S, \widetilde F_j \otimes \widetilde
L^{m})/H^0(\widetilde S, \widetilde F_{j-1} \otimes \widetilde
L^{m}).$ Let $\widetilde T'$ be its preimage in $H^0(\widetilde S,
\widetilde F_j \otimes \widetilde L^{m})$ and $\TT' \subset
\widetilde F_j$ be a subsheaf generated by the subspace $\widetilde
T'.$ Then there is a sheaf  epimorphism $\TT' \twoheadrightarrow
\tau$ with the kernel $\TT' \cap \widetilde F_{j-1}.$ Let $\widetilde
K= H^0(\widetilde S, (\TT' \cap \widetilde F_{j-1})\otimes
\widetilde L^{ m})\subset H^0(\widetilde S, \widetilde
F_{j-1}\otimes \widetilde L^{ m})$ be its generating subspace.
Then the isomorphism $\upsilon$ leads to the exact diagram of
vector spaces
\begin{equation*}\xymatrix{0\ar[r]& \upsilon(\widetilde K)
\ar[d]_{\wr}^{\upsilon} \ar[r]& \upsilon(\widetilde T')
\ar[d]_{\wr}^{\upsilon} \ar[r]& \upsilon(\widetilde
T')/\upsilon(\widetilde K)
\ar[d]_{\wr}^{\overline \upsilon} \ar[r]&0\\
0\ar[r]& \widetilde K \ar[r]& \widetilde T' \ar[r]& \widetilde T'
/\widetilde K \ar[r]&0}
\end{equation*}
with the morphism $\overline \upsilon$ induced by the morphism
$\upsilon.$ Also there are exact sequences of
$\upsilon$-corresponding coherent sheaves
\begin{equation*}\xymatrix{
0\ar[r]&\upsilon(\TT' \cap \widetilde F_{j-1}) \ar[r]&
\upsilon(\TT') \ar[r]& \upsilon(\tau) \ar[r]&0,\\
0\ar[r]& \TT' \cap \widetilde F_{j-1} \ar[r]& \TT' \ar[r]& \tau
\ar[r]& 0.}
\end{equation*}
The sheaves $\TT' \cap \widetilde F_{j-1},$ $\upsilon(\TT' \cap
\widetilde F_{j-1}),$ $\TT'$ and $\upsilon(\TT')$ coincide under
restriction on open subsets  $W$ and $\sigma (W)$ respectively where $W\subset \widetilde S$ is the maximal open subscheme such that  $\sigma|_W$ is an isomorphism.
Then if $\tau$ is a torsion sheaf, then $\upsilon (\tau)$ is also
torsion sheaf. This contradicts the assumption that  the
subsheaf $F_{j-1}$ is saturated.\end{proof}

\begin{corollary} There are isomorphisms $\sigma^{\ast}
(F_j/F_{j-1})/\tors \cong \widetilde F_j/ \widetilde F_{j-1}.$
\end{corollary}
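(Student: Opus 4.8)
The statement to prove is the corollary: there are isomorphisms $\sigma^{\ast}(F_j/F_{j-1})/\tors \cong \widetilde F_j/\widetilde F_{j-1}$, where $\widetilde F_j = \sigma^\ast F_j/\tors$ for a Jordan--H\"older filtration of a semistable sheaf $E$. The plan is to combine the preceding Proposition \ref{corr} (which identifies $\widetilde F_j$ with $\sigma^\ast F_j/\tors$ and gives the diagram \eqref{*}) with Proposition \ref{satur} (which tells us the quotient $\widetilde F_j/\widetilde F_{j-1}$ is torsion-free, i.e. already $=\sigma^\ast(F_j/F_{j-1})$ up to killing the torsion of the latter).

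First I would apply the right-exactness of the pullback functor $\sigma^\ast$ to the short exact sequence $0 \to F_{j-1} \to F_j \to F_j/F_{j-1} \to 0$, obtaining an exact sequence $\sigma^\ast F_{j-1} \to \sigma^\ast F_j \to \sigma^\ast(F_j/F_{j-1}) \to 0$. Quotienting by the respective torsion subsheaves (in the modified sense) and using that $\sigma^\ast F_i/\tors = \widetilde F_i$ by Proposition \ref{corr}, one gets a surjection $\widetilde F_j \to \sigma^\ast(F_j/F_{j-1})/\tors$ whose kernel contains (the image of) $\widetilde F_{j-1}$. Thus there is a natural surjection $\widetilde F_j/\widetilde F_{j-1} \twoheadrightarrow \sigma^\ast(F_j/F_{j-1})/\tors$. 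The remaining task is to show this surjection is an isomorphism.

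To do so I would compare the two sheaves on the principal component $\widetilde S^0$, where $\sigma$ restricts to an isomorphism $\sigma^0 \colon \widetilde S^0_{\mathrm{gen}} \xrightarrow{\sim} S_{\mathrm{gen}}$ over a dense open subset. There both $\widetilde F_j/\widetilde F_{j-1}$ and $\sigma^\ast(F_j/F_{j-1})/\tors$ coincide with the pullback of $F_j/F_{j-1}$ modulo torsion, hence agree; so the kernel of the surjection is supported on the exceptional locus. Now I would compute Hilbert polynomials with respect to the distinguished polarization $\widetilde L$: by Corollary \ref{sssh} each $\widetilde F_i$ has Hilbert polynomial $r_i p_E(n)$, so $\widetilde F_j/\widetilde F_{j-1}$ has Hilbert polynomial $(r_j - r_{j-1})p_E(n)$; on the other hand, applying Proposition \ref{sectionsrel}/the section-space isomorphism $\upsilon$ to $F_j/F_{j-1}$ and noting $\widetilde E = \delta^\ast(\widetilde E|_{\widetilde S^0})$, the target $\sigma^\ast(F_j/F_{j-1})/\tors$ has the same Hilbert polynomial $(r_j - r_{j-1})p_E(n)$. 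A surjection of coherent sheaves with equal Hilbert polynomials on a projective scheme has zero kernel, so it is an isomorphism.

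The main obstacle is the bookkeeping around the modified torsion subsheaf and the decomposition $\sigma = \sigma^0_{[\ell]} \circ \delta^0_{[\ell]}$: one must be careful that "$(-)/\tors$" interacts correctly with quotients (it need not in general, which is exactly why Proposition \ref{satur} is invoked — saturation of $F_{j-1}$ in $F_j$ guarantees $F_j/F_{j-1}$ is torsion-free, preventing a drop in rank or spurious torsion contributions), and that the identification $\delta^{0\ast}$ of a locally free sheaf with its pullback does not alter Hilbert polynomials. Once Propositions \ref{corr}, \ref{satur} and \ref{sectionsrel} together with Corollary \ref{sssh} are in hand, the argument is essentially a Hilbert-polynomial comparison, so I expect the proof to be short.
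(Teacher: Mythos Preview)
Your surjection $\widetilde F_j/\widetilde F_{j-1}\twoheadrightarrow \sigma^\ast(F_j/F_{j-1})/\tors$ is correctly set up, but the Hilbert-polynomial step has a genuine gap. Proposition~\ref{sectionsrel} and the isomorphism $\upsilon$ are statements about $E$ together with \emph{its own} standard resolution; here the morphism $\sigma$ is the resolution of $E$, not of $F_j/F_{j-1}$, so nothing in the preceding results yields an identification $H^0(\widetilde S,\sigma^\ast(F_j/F_{j-1})/\tors\otimes\widetilde L^m)\cong H^0(S,(F_j/F_{j-1})\otimes L^m)$. Without this you cannot compute the Hilbert polynomial of $\sigma^\ast(F_j/F_{j-1})/\tors$ independently, and in fact the equality of Hilbert polynomials you want is equivalent to the isomorphism under proof, so the argument becomes circular.

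The paper closes the argument by running the surjection in the \emph{opposite} direction. Pulling back $0\to F_{j-1}\to F_j\to F_j/F_{j-1}\to 0$ by $\sigma^\ast$ and then passing to the quotients $\widetilde F_{j-1},\widetilde F_j$ yields a $3\times 3$ diagram whose right column is a surjection $\sigma^\ast(F_j/F_{j-1})\twoheadrightarrow \widetilde F_j/\widetilde F_{j-1}$ with kernel $\tau'$; the top row exhibits $\tau'$ as a quotient of $\tors(\sigma^\ast F_j)$, so $\tau'$ is a torsion sheaf. Now Proposition~\ref{satur} (torsion-freeness of $\widetilde F_j/\widetilde F_{j-1}$) forces every torsion section of $\sigma^\ast(F_j/F_{j-1})$ into the kernel, hence $\tau'=\tors(\sigma^\ast(F_j/F_{j-1}))$ and the isomorphism follows at once --- no Hilbert-polynomial comparison is needed.
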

\begin{proof} Take an exact triple
$$
0\to F_{j-1} \to F_j \to F_j/F_{j-1} \to 0
$$
and apply the functor $\sigma^{\ast}.$ This yields
$$
0 \to \sigma^{\ast} F_{j-1}/\tau \to \sigma^{\ast} F_j \to
\sigma^{\ast} (F_j/F_{j-1})\to 0$$ where the symbol $\tau$ denotes
the subsheaf of torsion violating exactness. Factoring  first two
sheaves by torsion and applying Proposition \ref{corr} one has
an exact diagram
\begin{equation*}\xymatrix{&0\ar[d]&0\ar[d]&0\ar[d]&\\
0\ar[r]& N \ar[d]\ar[r]& \tors(\sigma^{\ast}F_i)\ar[r] \ar[d] &\tau'\ar[r] \ar[d]& 0\\
0\ar[r]& \sigma^{\ast} F_{j-1}/\tau \ar[r]\ar[d]& \sigma^{\ast}
F_j\ar[r] \ar[d]&
\sigma^{\ast}(F_j /F_{j-1}) \ar[r] \ar[d]&0\\
0\ar[r]& \widetilde F_{j-1} \ar[r] \ar[d]& \widetilde F_j \ar[r]
\ar[d]& \widetilde F_j/
\widetilde F_{j-1} \ar[r] \ar[d]&0\\
&0&0&0&}
\end{equation*}
where the sheaf $N$ is defined as $\ker (\sigma^{\ast}F_{j-1}/\tau
\to \widetilde F_{j-1})$. It rests to note that the sheaf $\tau'$
is a torsion sheaf. Also since the subsheaf $F_{j-1}\subset F_j$ is
saturated then by  Proposition \ref{satur} the sheaf
$\widetilde F_j/ \widetilde F_{j-1}$ has no torsion. Then
$\widetilde F_j/\widetilde F_{j-1} \cong \sigma^{\ast}(F_j
/F_{j-1}) /\tors$.
\end{proof}
\begin{corollary} \label{stabquot} Quotient sheaves $\widetilde F_j/\widetilde
F_{j-1}$ are stable and their reduced Hilbert polynomial is equal
to $p_E(n).$
\end{corollary}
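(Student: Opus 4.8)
The plan is to deduce this from the preceding corollary, which identifies $\widetilde F_j/\widetilde F_{j-1}$ with $\sigma^\ast(F_j/F_{j-1})/\tors$, together with the observation that the Jordan--H\"older factor $\gr_j(E)=F_j/F_{j-1}$ is already \emph{stable} with reduced Hilbert polynomial $p_E(n)$; the point is then to re-run the argument of Proposition~\ref{ssc} with $\gr_j(E)$ in place of $E$.

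I would first dispose of the Hilbert polynomial, which is the easy half. For $m\gg 0$ the higher cohomology of $\widetilde F_{j-1}\otimes\widetilde L^m$, of $\widetilde F_j\otimes\widetilde L^m$ and of their quotient vanishes, so on the exact triple $0\to\widetilde F_{j-1}\to\widetilde F_j\to\widetilde F_j/\widetilde F_{j-1}\to 0$ the Euler characteristic is additive and agrees with $h^0$. By Corollary~\ref{sssh}, $\chi(\widetilde F_i\otimes\widetilde L^n)=r_i\,p_E(n)$ for $i=j-1,j$, hence $\chi(\widetilde F_j/\widetilde F_{j-1}\otimes\widetilde L^n)=(r_j-r_{j-1})\,p_E(n)$; since $\sigma$ is an isomorphism over a dense open subscheme, $\rank(\widetilde F_j/\widetilde F_{j-1})=\rank(F_j/F_{j-1})=r_j-r_{j-1}$, so the reduced Hilbert polynomial equals $p_E(n)$.

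For stability I would use that $\widetilde F_j/\widetilde F_{j-1}\cong\sigma^\ast(\gr_j E)/\tors$ is the image of the stable sheaf $\gr_j(E)$ under $G\mapsto\sigma^\ast G/\tors$, and that the proof of Proposition~\ref{ssc} uses nothing about $E$ beyond two facts that hold verbatim for $\gr_j(E)$: that $\sigma$ is an isomorphism on a dense open (so ranks and the subsheaf correspondence are preserved) and the section isomorphism $\upsilon\colon H^0(\widetilde S,\sigma^\ast G/\tors\otimes\widetilde L^m)\xrightarrow{\sim}H^0(S,G\otimes L^m)$ of Proposition~\ref{sectionsrel}. Concretely: given a proper subsheaf $\widetilde H\subset\widetilde F_j/\widetilde F_{j-1}$, for $m\gg 0$ both $(\widetilde F_j/\widetilde F_{j-1})\otimes\widetilde L^m$ and $\widetilde H\otimes\widetilde L^m$ are globally generated, $\widetilde H$ is generated by $V_{\widetilde H}=H^0(\widetilde S,\widetilde H\otimes\widetilde L^m)$, and $\upsilon(V_{\widetilde H})$ generates a subsheaf $H\subset\gr_j(E)$ of the same rank with $H^0(S,H\otimes L^m)=\upsilon(V_{\widetilde H})$; stability of $\gr_j(E)$ then forces the strict inequality, which transports back to $\widetilde H$. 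Alternatively one may route this through Propositions~\ref{corr} and~\ref{satur} and Remark~\ref{sscr}: saturated subsheaves of $\widetilde F_j$ containing $\widetilde F_{j-1}$ are $\upsilon$-corresponding to saturated subsheaves of $F_j$ containing $F_{j-1}$, hence to subsheaves of $\gr_j(E)$, compatibly with Hilbert polynomials.

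The only step that needs a word of justification is the tacit extension of Proposition~\ref{ssc} to $\gr_j(E)$: the morphism $\sigma$ here is the one produced by the standard resolution of $E$, not the one the definition would attach to $\gr_j(E)$. I would therefore point out that the propositions of this section require of $\sigma$ only that it be birational and that $\upsilon$ exist, and that the preceding corollary (via Proposition~\ref{satur}, since $F_{j-1}\subset F_j$ is saturated) already guarantees that $\sigma^\ast(\gr_j E)/\tors\cong\widetilde F_j/\widetilde F_{j-1}$ is torsion-free, so the section-correspondence argument runs through unchanged. Everything else is bookkeeping with Euler characteristics, and I do not foresee a genuine obstacle.
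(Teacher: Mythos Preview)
Your proof is correct, but your primary route differs from the paper's, and your ``alternative'' route is essentially what the paper actually does.

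The paper never constructs a separate section-isomorphism for $\gr_j(E)$; it works entirely inside the original $\upsilon$ for $E$. Given a subsheaf $\widetilde R\subset \widetilde F_j/\widetilde F_{j-1}$, the paper takes the preimage $\widetilde H$ of $H^0(\widetilde S,\widetilde R\otimes\widetilde L^m)$ in $H^0(\widetilde S,\widetilde F_j\otimes\widetilde L^m)$, sets $H=\upsilon(\widetilde H)$, lets $\HH\subset F_j$ be the subsheaf $H$ generates, and then compares $\HH/F_{j-1}$ with $F_j/F_{j-1}$ via stability of the latter. A short chain of equalities gives $h^0(\widetilde S,\widetilde R\otimes\widetilde L^m)=h^0(S,(\HH/F_{j-1})\otimes L^m)$, and the strict inequality follows. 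This is exactly your ``alternative'' paragraph, phrased at the level of section spaces rather than saturated subsheaves.

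Your primary route is different: you use the preceding corollary to identify $\widetilde F_j/\widetilde F_{j-1}\cong \sigma^\ast(\gr_j E)/\tors$ and then re-run Proposition~\ref{ssc} with $\gr_j(E)$ in place of $E$. This is conceptually cleaner, but it needs a section isomorphism $\upsilon_j\colon H^0(\widetilde S,\sigma^\ast(\gr_j E)/\tors\otimes\widetilde L^m)\xrightarrow{\sim}H^0(S,\gr_j(E)\otimes L^m)$ for \emph{this} $\sigma$, and Proposition~\ref{sectionsrel} as stated only furnishes $\upsilon$ for the sheaf actually being resolved. You are right that the ingredients of its proof --- isomorphism over a dense open, injectivity of restriction (which here comes from torsion-freeness via Proposition~\ref{satur}), and equality of $h^0$'s (your first paragraph) --- are all available, so $\upsilon_j$ does exist; but this is an \emph{extension} of Proposition~\ref{sectionsrel}, not a citation of it, and you should say so explicitly. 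The paper's approach buys economy (nothing new to prove about $\upsilon$); yours buys modularity (stability transfers whenever the section correspondence does), at the cost of that extra sentence of justification.
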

\begin{proof} Consider a subsheaf $\widetilde R \subset
\widetilde F_j/\widetilde F_{j-1}$ and the space of global
sections $$H^0(\widetilde S, \widetilde R \otimes \widetilde
L^{m})\subset H^0(\widetilde S, (\widetilde F_j/\widetilde
F_{j-1})\otimes \widetilde L^{m})= H^0(\widetilde S, \widetilde
F_j \otimes \widetilde L^{m})/ H^0(\widetilde S, \widetilde
F_{j-1} \otimes \widetilde L^{m}).$$ We assume as usually $m$ to
be so big as higher cohomology groups vanish. Let $\widetilde H$
be the preimage of the subspace $H^0(\widetilde S, \widetilde R
\otimes \widetilde L^{m})$ in $H^0(\widetilde S, \widetilde F_j
\otimes \widetilde L^{m})$, and $H:= \upsilon(\widetilde H)$.
Denote by $\HH$ a subsheaf in $F_j$ which is generated
by the subspace $H$. It is clear that $\HH /F_{j-1} \subset F_j
/F_{j-1}.$ From the chain of obvious equalities
\begin{eqnarray} h^0(\widetilde S, \widetilde R \otimes \widetilde L
^{m})=\dim \widetilde H - h^0(\widetilde S, \widetilde
F_{j-1}\otimes \widetilde L^{ m})=\dim H-h^0(S, F_{j-1} \otimes
L^{m})\nonumber\\
=h^0(S, \HH \otimes L^{m})- h^0(S, F_{j-1}\otimes L^{m})=h^0(S,
(\HH/F_{j-1}) \otimes L^{m})\nonumber \end{eqnarray} it follows
that
\begin{eqnarray*}\frac{h^0(\widetilde S, (\widetilde F_j
/\widetilde F_{j-1})\otimes \widetilde L^{m})} {\rank (\widetilde
F_j/\widetilde F_{j-1})}&-& \frac{h^0(\widetilde S,\widetilde R
\otimes \widetilde
L^{m})}{\rank \widetilde R}=\nonumber \\
\frac{h^0(S, (F_j/F_{j-1})\otimes L^{m})} {\rank (F_j/F_{j-1})}&-&
\frac{h^0(S,(\HH/F_{j-1}) \otimes L^{m})}{\rank (\HH/F_{j-1})} >0.
\end{eqnarray*}
This proves stability of the quotient sheaf $\widetilde F_j /
\widetilde F_{j-1}.$
\end{proof}

Now consider the exact triple $0\to F_1 \to E \to \gr_1(E) \to 0$
and the corresponding triple of spaces of global sections
\begin{equation*} 0\to H^0(S, F_1\otimes L^{m})\to H^0(S, E\otimes
L^{m})\to H^0(S, \gr_1(E)\otimes L^{m})\to 0.
\end{equation*}
It is also exact for $m\gg 0.$ Transition to the corresponding
 $\OO_{\widetilde S}$-sheaf $\widetilde
E$, to its subsheaf $\widetilde F_1$, to global sections, and
application of the isomorphism $\upsilon$ lead to the commutative
diagram of vector spaces
\begin{equation*} \xymatrix{
0\ar[r]& H^0(S, F_1\otimes L^{m})\ar[r]& H^0(S, E\otimes
L^{m})\ar[r]&
H^0(S, \gr_1(E)\otimes L^{m})\ar[r]& 0\\
0\ar[r]& H^0(\widetilde S, \widetilde F_1\otimes \widetilde
L^{m})\ar[r] \ar[u]_{\wr}^{\upsilon}& H^0(\widetilde S, \widetilde
E\otimes \widetilde L^{m})\ar[r]\ar[u]_{\wr}^{\upsilon}&
H^0(\widetilde S, \gr_1(\widetilde E)\otimes \widetilde L^{
m})\ar[r]\ar[u]_{\wr}^{\overline \upsilon}& 0}
\end{equation*}
where the isomorphism $\overline \upsilon$ is induced by the
isomorphism $\upsilon.$ Continuing the reasoning inductively for
other subsheaves of Jordan--H\"{o}lder filtration of the sheaf $E$,
we get that bijective correspondence of subsheaves is continued
onto quotients of filtrations. Then the transition from quotient
sheaves $F_j/F_{j-1}$ to  $\widetilde F_j/\widetilde F_{j-1}$
preserves Hilbert polynomials and stability.

Now we can formulate the analog of a Jordan--H\"{o}lder filtration 
for a vector bundle $\widetilde E$ on an admissible scheme $\widetilde S$
for the notions of stability and semistability from Definition~\ref{sst}.

\begin{definition} {\it Jordan--H\"{o}lder filtration}
of a semistable vector bundle $\widetilde E$ with Hilbert polynomial
equal to $rp(n)$ is a sequence of semistable subsheaves $0
\subset \widetilde F_1 \subset \dots \subset \widetilde F_s
\subset \widetilde E$ with reduced Hilbert polynomials equal to
$p(n)$, such that quotient sheaves $\gr_j(\widetilde E)=\widetilde
F_j/\widetilde F_{j-1}$ are stable.

The sheaf $\bigoplus_j \gr_j(\widetilde E)$ is called  {\it
associated polystable sheaf} for the vector bundle $\widetilde E.$
\end{definition}

Then it follows from the results of propositions \ref{corr},
\ref{satur} and corollaries \ref{sssh} -- \ref{stabquot} that the
transformation \linebreak $E \mapsto \sigma^{\ast} E /\tors$ takes a
Jordan--H\"{o}lder filtration of the sheaf $E$ to a Jordan--H\"{o}lder filtration of the vector bundle $\sigma^{\ast} E /\tors \!.$

Let $(\widetilde S, \widetilde E)$ and $(\widetilde S', \widetilde
E')$ be semistable pairs.

\begin{definition} Semistable pairs  $(\widetilde S,
\widetilde E)$ and $(\widetilde S', \widetilde E')$ are called
{\it $M$-equivalent (monoidally equivalent)} if for the morphisms of
$\diamond$-product $\widetilde S \diamond \widetilde S'$ to the
factors $\overline \sigma'\colon \widetilde S \diamond \widetilde S'
\to \widetilde S$ and $\overline \sigma\colon \widetilde S \diamond
\widetilde S' \to \widetilde S'$ and for the associated polystable
sheaves  $\bigoplus_j \gr_j (\widetilde E)$ and $\bigoplus_j \gr_j
(\widetilde E')$ there are isomorphisms \begin{equation*}\overline
\sigma'^{\ast} \bigoplus_j \gr_j (\widetilde E)/\tors \cong
\overline \sigma^{\ast} \bigoplus_j \gr_j (\widetilde
E')/\tors\!.\end{equation*}
\end{definition}

\begin{proposition} S-equivalent semistable coherent sheaves $E$
and $E'$ are taken by the resolution to M-equivalent semistable pairs $(\widetilde
S, \widetilde E)$ and $(\widetilde S', \widetilde E')$ respectively.
\end{proposition}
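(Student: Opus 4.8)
The strategy is to reduce the assertion to two ingredients: the fact, already established above, that the standard resolution carries a Jordan--H\"older filtration to a Jordan--H\"older filtration, and a transitivity property of the operation $G\mapsto \sigma^{\ast}G/\tors$ along composites of canonical morphisms. First I would record what the preceding results yield for a single sheaf. Fix a Jordan--H\"older filtration $0=F_0\subset F_1\subset\dots\subset F_s=E$. By Propositions \ref{corr} and \ref{satur} and Corollaries \ref{sssh}--\ref{stabquot} it resolves to a Jordan--H\"older filtration $0=\widetilde F_0\subset\dots\subset\widetilde F_s=\widetilde E$ on $\widetilde S$ with $\widetilde F_j/\widetilde F_{j-1}\cong\sigma^{\ast}(\gr_j E)/\tors$. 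Since $\sigma^{\ast}$ commutes with finite direct sums and the torsion subsheaf of a direct sum is the direct sum of the torsion subsheaves, setting $G:=\gr(E)=\bigoplus_j\gr_j(E)$ I obtain $\bigoplus_j\gr_j(\widetilde E)\cong\sigma^{\ast}G/\tors$, and likewise $\bigoplus_j\gr_j(\widetilde E')\cong\sigma'^{\ast}G'/\tors$ with $G':=\gr(E')$.

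Next comes the key step. Let $\widetilde S\diamond\widetilde S'$ be the $\diamond$-product with projections $\overline\sigma'\colon\widetilde S\diamond\widetilde S'\to\widetilde S$ and $\overline\sigma\colon\widetilde S\diamond\widetilde S'\to\widetilde S'$, and put $\tau:=\sigma\circ\overline\sigma'=\sigma'\circ\overline\sigma$, the canonical morphism of $\widetilde S\diamond\widetilde S'$ to $S$ coming from the commutative square defining $\diamond$. By Proposition \ref{coversch} the scheme $\widetilde S\diamond\widetilde S'$ is again of the form $\Proj\bigoplus_{s\ge 0}(I''[t]+(t))^{s}/(t)^{s+1}$, hence is itself an admissible scheme obtained from $S$ by the iterated construction of Section \ref{s2}, and $\tau$ factors through the intermediate blow-ups exactly as $\sigma$ and $\sigma'$ do. I would then prove the transitivity statement: for every torsion-free coherent $\OO_S$-module $H$ there are canonical isomorphisms
\[
\overline\sigma'^{\ast}\bigl(\sigma^{\ast}H/\tors\bigr)/\tors\;\cong\;\tau^{\ast}H/\tors\;\cong\;\overline\sigma^{\ast}\bigl(\sigma'^{\ast}H/\tors\bigr)/\tors .
\]
Here $\tau^{\ast}H/\tors$ is, by \cite{Tim12}, the locally free resolution of $H$ on the admissible scheme $\widetilde S\diamond\widetilde S'$; the outer arrows are the composites of the canonical surjections $\tau^{\ast}H=\overline\sigma'^{\ast}\sigma^{\ast}H\twoheadrightarrow\overline\sigma'^{\ast}(\sigma^{\ast}H/\tors)\twoheadrightarrow\overline\sigma'^{\ast}(\sigma^{\ast}H/\tors)/\tors$, and one identifies their target with $\tau^{\ast}H/\tors$ by the argument of Proposition \ref{corr}: the two sheaves coincide over the dense open subscheme on which $\tau$ is an isomorphism, both are torsion-free with the same Hilbert polynomial along the principal component $\widetilde S^{0}$, hence equal there, and then they agree on all additional components by the quasi-ideality relation $\widetilde E=(\delta^{0}_{[\ell]})^{\ast}\widetilde E|_{\widetilde S^{0}}$ of Definition \ref{semistable}.

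Granting this transitivity statement, the proof concludes quickly. Applying it with $H=G$ and with $H=G'$ gives $\overline\sigma'^{\ast}\bigl(\bigoplus_j\gr_j\widetilde E\bigr)/\tors\cong\tau^{\ast}G/\tors$ and $\overline\sigma^{\ast}\bigl(\bigoplus_j\gr_j\widetilde E'\bigr)/\tors\cong\tau^{\ast}G'/\tors$. Since $E$ and $E'$ are S-equivalent, $G=\gr(E)\cong\gr(E')=G'$ as $\OO_S$-modules (using that $\gr$ is independent of the chosen Jordan--H\"older filtration), whence $\tau^{\ast}G/\tors\cong\tau^{\ast}G'/\tors$ and the two displayed sheaves are isomorphic. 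This is exactly the M-equivalence of the pairs $(\widetilde S,\widetilde E)$ and $(\widetilde S',\widetilde E')$.

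The step I expect to be the main obstacle is the transitivity statement, that is, the compatibility of the operation $\,\cdot\,^{\ast}(\,\cdot\,)/\tors$ (with torsion understood in the modified sense on reducible admissible schemes) with composition of canonical morphisms. The delicate point is to control the additional modified torsion produced by the second pullback and to verify that nothing is lost on the newly grown additional components of $\widetilde S\diamond\widetilde S'$; here the explicit $\Proj$-description of $\widetilde S\diamond\widetilde S'$ furnished by Proposition \ref{coversch}, together with the iterativity of the standard resolution from \cite{Tim12}, do the real work. By contrast, the S-equivalence hypothesis is invoked only at the last step and is the easy part of the argument.
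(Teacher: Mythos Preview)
Your proposal is correct and follows essentially the same route as the paper: both reduce to the identity $\bigoplus_j\gr_j(\widetilde E)\cong\sigma^{\ast}\gr(E)/\tors$ coming from the preceding results, then pull back to $\widetilde S\diamond\widetilde S'$ and use the commutative square $\sigma\circ\overline\sigma'=\sigma'\circ\overline\sigma$ together with $\gr(E)\cong\gr(E')$. The step you flag as the ``main obstacle'' --- the transitivity $\overline\sigma'^{\ast}(\sigma^{\ast}H/\tors)/\tors\cong(\sigma\circ\overline\sigma')^{\ast}H/\tors$ --- is precisely the equality the paper writes down in one line (its displayed formulas for $\overline\sigma'^{\ast}\bigoplus_j\gr_j(\widetilde E)/\tors$ and the analogous one for $E'$) without further comment; your outline of how to justify it via restriction to the principal component and quasi-ideality in fact supplies more detail than the paper itself gives.
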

\begin{proof} The resolution takes a semistable coherent sheaf
$E$ to a semistable pair  $(\widetilde S, \widetilde E)$. A Jordan--H\"{o}lder filtration 
of the sheaf $E$ is taken to a Jordan--H\"{o}lder filtration of the vector bundle $\widetilde E.$ Then the polystable sheaf $\bigoplus_j \gr_j(E)$ is taken to the associated
polystable sheaf  $\bigoplus _j \gr_j(\widetilde E)$. Hence for the sheaf $E$ we have
\begin{equation}\label{grE}\overline \sigma '^{\ast}\bigoplus_j
\gr_j(\widetilde E)/\tors= \overline \sigma
'^{\ast}[\sigma^{\ast}\bigoplus_j gr_j(E)/\tors]/\tors =
\overline \sigma '^{\ast}\sigma^{\ast}\bigoplus_j
\gr_j(E)/\tors\!.\end{equation} Analogously, for a sheaf $E'$ which
is S-equivalent to the sheaf  $E$ one has
\begin{equation}\label{grE'}\overline \sigma ^{\ast}\bigoplus_j
\gr_j(\widetilde E')/\tors=\overline \sigma
^{\ast}[\sigma'^{\ast}\bigoplus_j \gr_j(E')/\tors]/\tors
=\overline \sigma ^{\ast} \sigma'^{\ast}\bigoplus_j
\gr_j(E')/\tors\!.\end{equation} Right hand sides of (\ref{grE}) and
(\ref{grE'}) are isomorphic by the isomorphism of polystable
$\OO_S$-sheaves $\bigoplus_i \gr_i(E)\cong \bigoplus_i \gr_i(E')$
and by commutativity of the diagram \begin{equation*}
\xymatrix{\widetilde S \diamond \widetilde S' \ar[d]_{\overline
\sigma'} \ar[r]^{\overline \sigma }&\widetilde S'
\ar[d]^{\sigma'}\\
\widetilde S \ar[r]_{\sigma}& S}\end{equation*} for
$\diamond$-product. The proposition is proven. \end{proof}

We turn again to the polystable sheaf $\gr(E)\!=\! \bigoplus_j \gr_j(E)$
in the given S-equival\-ence class. Let $\sigma_{\gr}\colon \!\widetilde
S_{\gr}\! \to \!S$ be the corresponding canonical morphism for a standard resolution defined by
the sequence of sheaves of ideals $(I_{\gr})_i$, $i=1, \dots, \ell$.
\begin{proposition} For all $E \in [E]$ the sheaves
$\sigma_{\gr}^{\ast}E/\tors$ are locally free.
\end{proposition}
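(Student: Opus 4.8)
The plan is to resolve a fixed member $E'$ of the class $[E]$ (so $\gr(E')=\gr(E)$, the polystable sheaf defining the class) against its own Jordan--Hölder filtration and to exhibit $\sigma_{\gr}^{\ast}E'/\tors$ as an iterated extension of locally free sheaves. The first ingredient I would establish is that each $\sigma_{\gr}^{\ast}\gr_j(E')/\tors$ is locally free. Indeed $\gr(E')=\gr(E)=\bigoplus_j\gr_j(E')$ is a coherent torsion-free $\OO_S$-sheaf and $\sigma_{\gr}$ is by definition its standard resolution, so by the theorem of \cite{Tim12} recalled above $\sigma_{\gr}^{\ast}\gr(E')/\tors$ is a locally free $\OO_{\widetilde S_{\gr}}$-module; since pullback and the formation of the torsion subsheaf in the modified sense are both additive over finite direct sums, $\sigma_{\gr}^{\ast}\gr(E')/\tors=\bigoplus_j\sigma_{\gr}^{\ast}\gr_j(E')/\tors$, and a direct summand of a locally free sheaf over a Noetherian scheme is locally free.

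The second ingredient is that $\Sing E'\subseteq\Sing\gr(E)$ for every $E'\in[E]$. This follows by induction on the length of a Jordan--Hölder filtration $0=F_0\subset\dots\subset F_s=E'$ from the long exact sequences of $\EExt^{\bullet}(-,\OO_S)$ attached to the triples $0\to F_{j-1}\to F_j\to\gr_j(E')\to0$, which give $\Supp\EExt^i(F_j,\OO_S)\subseteq\Supp\EExt^i(\gr_j(E'),\OO_S)\cup\Supp\EExt^i(F_{j-1},\OO_S)$ for all $i\ge1$. Consequently $\sigma_{\gr}$, being an isomorphism over the complement of the nowhere dense closed subset blown up in the resolution of $\gr(E)$, restricts to an isomorphism over a dense open subset of $S$ on which $E'$ is locally free. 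This is the property that matters: the proofs of Propositions \ref{sectionsrel}, \ref{corr} and \ref{satur} and of the corollaries following Proposition \ref{satur} use only that the canonical morphism is birational and an isomorphism over a dense open on which the sheaf is locally free, that the target is an admissible scheme with its quasi-ideal (principal-component) structure, and Serre vanishing; they never use that the blow-up centres are the Fitting ideals native to $E'$. Hence all of these results apply verbatim with $\sigma_{\gr}$ in place of the resolution native to $E'$. In particular one obtains the isomorphism $\upsilon\colon H^0(\widetilde S_{\gr},(\sigma_{\gr}^{\ast}E'/\tors)\otimes\widetilde L^m)\stackrel{\sim}{\to}H^0(S,E'\otimes L^m)$ for $m\gg0$ and, using that each $F_{j-1}$ is saturated in $F_j$ (the quotient $\gr_j(E')$ being torsion-free since it is stable), from the corollary to Proposition \ref{satur} establishing $\widetilde F_j/\widetilde F_{j-1}\cong\sigma^{\ast}(F_j/F_{j-1})/\tors$ a short exact sequence
\begin{equation*}
0\longrightarrow\sigma_{\gr}^{\ast}F_{j-1}/\tors\longrightarrow\sigma_{\gr}^{\ast}F_j/\tors\longrightarrow\sigma_{\gr}^{\ast}\gr_j(E')/\tors\longrightarrow0.
\end{equation*}

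With these in hand I would finish by induction on $j$, proving that $\sigma_{\gr}^{\ast}F_j/\tors$ is locally free for all $j$: the case $j=0$ is trivial, the case $j=1$ is the first ingredient applied to $F_1=\gr_1(E')$, and in the inductive step the displayed short exact sequence presents $\sigma_{\gr}^{\ast}F_j/\tors$ as an extension of the locally free sheaf $\sigma_{\gr}^{\ast}\gr_j(E')/\tors$ by the locally free sheaf $\sigma_{\gr}^{\ast}F_{j-1}/\tors$; such an extension is locally free because it is locally split, the quotient being locally free. Taking $j=s$ yields that $\sigma_{\gr}^{\ast}E'/\tors$ is locally free, which is the assertion.

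The step I expect to be the main obstacle is the transfer carried out in the second paragraph: the apparatus of Propositions \ref{sectionsrel}--\ref{satur} is developed in the text for the resolution attached to a given sheaf, and one must check that it remains valid when the ``foreign'' resolution $\sigma_{\gr}$ of the associated polystable sheaf is substituted. The inclusion of singular loci $\Sing E'\subseteq\Sing\gr(E)$ is exactly what licenses this substitution; granted it, none of those proofs depends on the precise (Fitting-ideal) choice of blow-up centres, so they carry over unchanged, and the remaining bookkeeping with the modified torsion reduces to the already-proven compatibility statements.
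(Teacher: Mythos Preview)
Your proof is correct and follows the same overall strategy as the paper: show that each $\sigma_{\gr}^{\ast}\gr_j(E)/\tors$ is locally free as a direct summand of $\sigma_{\gr}^{\ast}\gr(E)/\tors$, then climb the Jordan--H\"older filtration using the short exact sequences $0\to\sigma_{\gr}^{\ast}F_{j-1}/\tors\to\sigma_{\gr}^{\ast}F_j/\tors\to\sigma_{\gr}^{\ast}\gr_j(E)/\tors\to0$ and the fact that an extension of locally free sheaves is locally free.

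The one place you go beyond the paper is your ``second ingredient'': you explicitly argue that $\Sing E'\subseteq\Sing\gr(E)$ and use this to justify that the apparatus of Propositions \ref{sectionsrel}--\ref{satur} and the corollary yielding the short exact sequences transfers from the native resolution of $E'$ to the foreign morphism $\sigma_{\gr}$. The paper simply writes down the exact sequences as ``induced by the Jordan--H\"older filtration'' without spelling out this transfer. Your version is more careful here, and the inclusion of singular loci is exactly the right observation to license the substitution; otherwise the arguments are identical.
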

\begin{proof}
The sheaf $\sigma_{\gr}^{\ast} \gr(E)/\tors$ is locally free. This
implies that all the direct summands  $\sigma_{\gr}^{\ast}
\gr_j(E)/\tors$ are also locally free. Consider inductively
the following exact sequences
\begin{eqnarray}0\to \sigma_{\gr}^{\ast} F_1/\tors
=\!\!\!= \sigma_{\gr}^{\ast} \gr_1(E)/\tors\to 0,\nonumber\\
0\to \sigma_{\gr}^{\ast} F_1/\tors\to \sigma_{\gr}^{\ast}
F_{2}/\tors \to \sigma_{\gr}^{\ast} \gr_{2}(E)/\tors \to 0,\nonumber\\
.\;.\;.\;.\;.\;.\;.\;.\;.\;.\;.\;.\;.\;.\;.\;.\;.\;.\;.\;.\;.\;.\;.\;.\;.\;.\;.\;.\;.\;.\;.\;.\;.\;.\;.\;.\;.\;.\;.\;.\;.\;.\;.\;.\; \nonumber\\
0\to \sigma_{\gr}^{\ast} F_{s-1}/\tors \to \sigma_{\gr}^{\ast}
E/\tors \to \sigma_{\gr}^{\ast} \gr_s(E)/\tors \to 0,\nonumber
\end{eqnarray}
induced by Jordan -- H\"{o}lder filtration of any semistable sheaf
 $E$ of the given S-equi\-valence class. It follows that the sheaf
$\sigma_{\gr}^{\ast} E/\tors$ is locally free. \end{proof}

Now we come to the global version of M-equivalence.
We prove that on the set of all $T$-flat families of admissible schemes of the view $(\pi\colon \widetilde \Sigma \to T, \widetilde \L)$ there is a commutative binary operation $$\diamond\colon ((\pi\colon \widetilde \Sigma \to T, \widetilde \L),(\pi'\colon \widetilde \Sigma' \to T, \widetilde \L')) \mapsto (\pi_\Delta\colon \widetilde \Sigma_\Delta \to T, \widetilde \L_\Delta)$$
\begin{itemize}
    \item there is a commutative diagram of schemes 
    $$\xymatrix{\widetilde \Sigma_\Delta \ar[d]_{\widehat \sigma\!\!\!\sigma} \ar[dr]_{\pi_\Delta}\ar[r]^{\widehat \sigma\!\!\!\sigma'}& \widetilde \Sigma' \ar[d]^{\pi'} \\
    \widetilde \Sigma \ar[r]_\pi &T}
    $$
    \item $(\pi_\Delta\colon \widetilde \Sigma_\Delta \to T, \widetilde \L_\Delta)$ is flat over $T$;
    \item if $\chi (\widetilde \L^n|_{\pi^{-1}(t)})=\chi (\widetilde \L{'^ n}|_{\pi{'^{-1}}(t)})$ then $\chi (\widetilde \L_\Delta^n|_{\pi_\Delta^{-1}(t)})=\chi (\widetilde \L^n|_{\pi^{-1}(t)})=\chi (\widetilde \L{'^n}|_{\pi{'^{-1}}(t)})$;
    \item if $\chi (\widetilde \E \otimes \widetilde \L^n|_{\pi^{-1}(t)})=\chi (\widetilde \E' \otimes \widetilde \L{'^n}|_{\pi{'^{-1}}(t)})$ then 
    $\chi (\widehat \sigma\!\!\!\sigma^\ast \widetilde \E \otimes \widetilde \L_\Delta^n|_{\pi_\Delta^{-1}(t)})=\chi (\widetilde \E \otimes \widetilde \L^n|_{\pi^{-1}(t)})$ and $\chi (\widehat \sigma\!\!\!\sigma{'^\ast} \widetilde \E' \otimes \widetilde \L_\Delta^n|_{\pi_\Delta^{-1}(t)})=\chi (\widetilde \E' \otimes \widetilde \L{'^n}|_{\pi{'^{-1}}(t)})$.
\end{itemize}

Let $T,S$ be  schemes over a field $k$, $\pi\colon \widetilde \Sigma
\to T$ a morphism of $k$-schemes. We recall the following

\begin{definition}[\cite{Tim8}, Definition 5] \label{bitriv} A family of schemes
$\pi\colon \widetilde \Sigma \to T$ is {\it birationally $S$-trivial}
if there exist isomorphic open subschemes $\widetilde \Sigma_0
\subset \widetilde \Sigma$ and $\Sigma_0 \subset T\times S$ and
there is a scheme equality $\pi(\widetilde \Sigma_0)=T$.
\end{definition}
In particular, the latter equality  means that all fibers of the
morphism $\pi$ have nonempty intersections with the open subscheme
$\widetilde \Sigma_0$.

In the case when $T=\Spec k$ and $\pi$ is a constant morphism
 $\widetilde \Sigma_0 \cong \Sigma_0$ are isomorphic open subschemes in $\widetilde S$ and $S$.

Since in the present paper we consider only birationally
$S$-trivial families, they will be called {\it
birationally trivial} families.

Since we are interested in birationally trivial families of admissible schemes, we can assume that there are commutative diagrams 
$$\xymatrix{\widetilde \Sigma \ar[rd]_\pi \ar[r]^{\sigma\!\!\!\sigma}&\Sigma \ar[d]_p\\
&T} \quad \xymatrix{\widetilde \Sigma' \ar[rd]_{\pi'} \ar[r]^{\sigma\!\!\!\sigma'}&\Sigma \ar[d]_p\\
&T}
$$
where the morphisms $\sigma\!\!\!\sigma$ and $\sigma\!\!\!\sigma'$ are birational and projective.

Now we concentrate on the  case when $T=\Spec k$ and $\widetilde \Sigma=\widetilde S$, $\widetilde \Sigma'=\widetilde S'$ are admissible schemes. Then they can be obtained as zero fibers by consequent blowups $\sigma\!\!\!\sigma_i=bl \I_i,$ $\I_i\subset \OO_{\Sigma_{i-1}},$ $i=1,\dots,\ell,$ of the one-parameter family $\Sigma=\Sigma_0=T \times S$ for $T=\Spec k[t]$:
\begin{eqnarray*}
\Sigma_0 \stackrel{\sigma\!\!\!\sigma_1}{\longleftarrow} \Sigma_1\stackrel{\sigma\!\!\!\sigma_2}{\longleftarrow} \Sigma_2 \stackrel{\sigma\!\!\!\sigma_3}{\longleftarrow} \dots \stackrel{\sigma\!\!\!\sigma_\ell}{\longleftarrow} \Sigma_\ell=\widetilde  \Sigma, \quad \sigma\!\!\!\sigma_i=bl \I_i,\;\; \I_i\subset \OO_{\Sigma_{i-1}}, \;\; i=1,\dots,\ell;\\
\Sigma_0 \stackrel{\sigma\!\!\!\sigma'_1}{\longleftarrow} \Sigma'_1\stackrel{\sigma\!\!\!\sigma'_2}{\longleftarrow} \Sigma'_2 \stackrel{\sigma\!\!\!\sigma'_3}{\longleftarrow} \dots \stackrel{\sigma\!\!\!\sigma'_{\ell'}}{\longleftarrow} \Sigma'_{\ell'}=\widetilde  \Sigma', \quad \sigma\!\!\!\sigma'_i=bl \I'_i,\;\; \I'_i\subset \OO_{\Sigma'_{i-1}}, \;\; i=1,\dots,\ell'.
\end{eqnarray*}
For the first step consider the morphisms $\sigma\!\!\!\sigma_1$ and $\sigma\!\!\!\sigma'_1$, the inverse images $$\widehat \I_1=\sigma\!\!\!\sigma_1^{-1}\I'_1 \cdot \OO_{\Sigma_1}, \quad \widehat \I'_1=\sigma\!\!\!\sigma{'^{-1}} \I_1 \cdot \OO_{\Sigma'_1}$$
and the corresponding blown up schemes 
$$ \widehat \Sigma_{11}=Bl_{\Sigma_1} \I'_1 \stackrel{\widehat \sigma\!\!\!\sigma'_1}{\longrightarrow} \Sigma_1, \quad \widehat \Sigma'_{11}=Bl_{\Sigma'_1} \I_1 \stackrel{\widehat \sigma\!\!\!\sigma_1}{\longrightarrow} \Sigma'_1.
$$
By the universality of blowups these schemes are supplied with the morphisms 
$$\widehat \Sigma_{11} \to \Sigma'_1, \quad \widehat \Sigma'_{11} \to \Sigma_1.
$$
The schemes $\widehat \Sigma_{11}$ and $\widehat \Sigma'_{11}$ are irreducible and reduced. They have morphisms to $\Sigma_1 \times_T \Sigma'_1$. Each of the schemes arises as a closure of the diagonal immersion of the open set $\Sigma_1 \supset U \subset \Sigma'_1$, $U=(T\setminus 0) \times S$. Hence $\widehat \Sigma_{11}=\widehat \Sigma'_{11}$, and  we come to the commutative square 
$$
\xymatrix{\Sigma_1 \ar[d]_{\sigma\!\!\!\sigma_1} &\ar[l]_{\widehat \sigma\!\!\!\sigma'_1}  \ar[ld]_{\sigma\!\!\!\sigma_{11}} \widehat \Sigma_{11} \ar[d]^{\widehat \sigma\!\!\!\sigma_1}\\
\Sigma_0&\ar[l]^{\sigma\!\!\!\sigma'_1} \Sigma'_1}
$$
and to the projection morphism $\pi_{11}\colon \widehat \Sigma_{11} \stackrel{\sigma\!\!\!\sigma_1 \circ \widehat \sigma\!\!\!\sigma'_1}{\longrightarrow} \Sigma_0 \stackrel{p}{\longrightarrow} T$.
If $\Sigma_1$ has fiberwise uniform Hilbert polynomial $\chi(\L_1^n|_{\pi_1^{-1}(t)})$  computed with respect to $\L_1=\sigma\!\!\!\sigma_1^\ast \L \otimes \sigma\!\!\!\sigma_1^{-1} \I_1 \cdot \OO_{\Sigma_1}$ and
$\Sigma'_1$ has fiberwise uniform Hilbert polynomial $\chi({\L'_1} ^n|_{{\pi'_1} ^{-1}(t)})$  computed with respect to $\L'_1={\sigma\!\!\!\sigma'_1}^\ast \L \otimes {\sigma\!\!\!\sigma'_1}^{-1} \I_1'\cdot \OO_{\Sigma'_1}$, then $\Sigma_{11}$ has also fiberwise uniform Hilbert polynomial $\chi(\L_{11}^n|_{\pi_{11}^{-1}(t)})$  computed with respect to $$\L_{11}=\sigma\!\!\!\sigma_{11}^\ast \L \otimes {\widehat\sigma\!\!\!\sigma'_1}^{-1}\sigma\!\!\!\sigma_1^{-1} \I_1\cdot \OO_{\Sigma_{11}}\cdot \widehat\sigma\!\!\!\sigma^{-1}_1{\sigma\!\!\!\sigma'_1}^{-1} \I'_1 \cdot\OO_{\Sigma_{11}}=\sigma\!\!\!\sigma_{11}^\ast \L \otimes {\sigma\!\!\!\sigma_{11}}^{-1}\I_1 \cdot\OO_{\Sigma_{11}}\cdot \sigma\!\!\!\sigma^{-1}_{11} \I'_1 \cdot\OO_{\Sigma_{11}}.$$
Iterating the process, one comes to the scheme $\widehat \Sigma_{ij}$ included in the commutative square 
$$
\xymatrix{\Sigma_i \ar[d]_{\sigma\!\!\!\sigma_1\circ \dots \circ \sigma\!\!\!\sigma_i} &&\ar[ll]_{\widehat \sigma\!\!\!\sigma'_1\circ \dots \circ \widehat \sigma\!\!\!\sigma'_{i'}}  \ar[dll]_{\sigma\!\!\!\sigma_{ii'}}\widehat \Sigma_{ii'} \ar[d]^{\widehat \sigma\!\!\!\sigma_1\circ \dots \circ \widehat\sigma\!\!\!\sigma_i}\\
\Sigma_0&&\ar[ll]^{\sigma\!\!\!\sigma'_1\circ \dots \circ \sigma\!\!\!\sigma'_{i'}} \Sigma'_{i'}}
$$
with the projection morphism $\pi_{ii'}=p\circ \sigma\!\!\!\sigma_{ii'}\colon \widehat \Sigma_{ii'} \to T$ for any pair $i,i'$. 
Also the morphism $\pi_i\colon \Sigma_i \to T$ is flat with fiberwise Hilbert polynomial being uniform over $t\in T$ if it is computed with respect to the polarization \begin{equation} \label{Li}
   \L_i=\sigma\!\!\!\sigma_i^\ast \dots \sigma\!\!\!\sigma_1^\ast \L \otimes  \sigma\!\!\!\sigma_i^{-1} \dots \sigma\!\!\!\sigma_1^{-1} I_1 \otimes \dots \otimes \sigma\!\!\!\sigma_i^{-1} I_i \cdot \OO_{\Sigma_i}, 
\end{equation} the morphism $\pi'_{i'}\colon \Sigma'_{i'} \to T$ is also flat with fiberwise Hilbert polynomial being uniform over $t\in T$ if it is computed with respect to the polarization \begin{equation}\label{L'j}
\L'_{i'}={\sigma\!\!\!\sigma'_{i'}} ^\ast \dots {\sigma\!\!\!\sigma'_1}^\ast \L \otimes  {\sigma\!\!\!\sigma'_{i'}}^{-1} \dots {\sigma\!\!\!\sigma'_1}^{-1} \I'_1 \otimes \dots \otimes {\sigma\!\!\!\sigma'_{j'}}^{-1} \I_j \cdot \OO_{\Sigma_{i'}},
\end{equation}
 and as well the morphism $\pi_{ii'}\colon \Sigma_{ii'} \to T$ is flat with fiberwise Hilbert polynomial being uniform over $t\in T$ if it is computed with respect to the polarization \begin{eqnarray}&&\L_{ii'}=\sigma\!\!\!\sigma_{ii'}^\ast  \L \otimes (\widehat \sigma\!\!\!\sigma'_1 \circ \dots \circ \widehat \sigma\!\!\!\sigma'_{i'})^{-1}(\sigma\!\!\!\sigma_i^{-1}\dots \sigma\!\!\!\sigma_1^{-1} \I_1 \otimes \dots \otimes \sigma\!\!\!\sigma_i^{-1}\I_i) \nonumber\\
&&\otimes (\widehat \sigma\!\!\!\sigma_1 \circ \dots \circ \widehat \sigma\!\!\!\sigma_i)^{-1}({\sigma\!\!\!\sigma'_{i'}}^{-1}\dots {\sigma\!\!\!\sigma'_1}^ {-1} \I'_1 \otimes \dots \otimes {\sigma\!\!\!\sigma'_{i'}}^{-1}\I'_{i'}) \cdot \OO_{\Sigma_{ii'}}. \label{Lij}\end{eqnarray}
If continue on $i,i'$ we come to $\widetilde\L_\Delta$ as fiberwise polarization for $\widetilde \Sigma_\Delta.$
\begin{remark}
If necessary, as in (\ref{polarizfam}) there can be  some degrees of inverse images of ideals $\I_i$, $i=1, \dots, \ell$, $\I'_{i'}$, $i'=1,\dots, \ell$, involved in (\ref{Li}), (\ref{L'j}), (\ref{Lij}). 
\end{remark}

Now we turn our attention to the preimages of locally free sheaves of $\OO_{\widetilde \Sigma}$-modules $\widetilde \E$ and of $\OO_{\widetilde \Sigma'}$-modules $\widetilde \E'$ on $\widetilde \Sigma_\Delta$. Since  $\widetilde \E$ is locally free, then $\sigma\!\!\!\sigma{'^\ast} \widetilde \E$ is also locally free; $\pi_\Delta\colon \widetilde \Sigma_\Delta \to T$ is flat morphism and hence $\sigma\!\!\!\sigma{'^\ast} \widetilde \E$ is flat over $T$, by \cite[Proposition 7.9.14]{EGAIII} $\pi_{\Delta \ast} (\sigma\!\!\!\sigma{'^\ast} \widetilde \E \otimes \widetilde \L_\Delta^n)$ is locally free sheaf and by \cite[Corollary 7.9.13]{EGAIII} fiberwise Hilbert polynomial $\chi(\sigma\!\!\!\sigma{'^\ast} \widetilde \E \otimes \widetilde \L_\Delta^n|_{\pi_\Delta^{-1}(t)})$ is constant over $t\in T.$
%the fiberwise Hilbert polynomial $\chi(\widetilde \L_\Delta^n|_{\pi^{-1}_\Delta}(t))$ does not depend on $t\in T$ then induction over rank proves that the same is true for any locally free sheaf $\F$. 
The sheaf of  $\OO_{\pi_\Delta^{-1}(t)}$-modules $\widehat \sigma\!\!\!\sigma{'^\ast}\widetilde \E \otimes \widetilde \L_\Delta^n|_{\pi_\Delta^{-1}(t)}$ can be obtained as a zero fiber of the appropriately blown up family $\Spec k[t']\times \pi^{-1}(t) \stackrel{\overline \sigma}{\longleftarrow} \overline \Sigma$ (the center of blowing up is concentrated in the zero fiber) with an inverse image  $\overline \sigma^\ast pr_2^\ast(\widetilde \E|_{\pi^{-1}(t)})$. The blowup morphism $\overline \sigma$ is assumed to be a composite of consequent blowups in the sequence of sheaves of ideals $I_1[t']+(t'), \dots , I_s[t']+(t')$. The distinguished polarization $\overline \L$ for the flat family $\overline \pi\colon \overline \Sigma \to \Spec k[t']$ takes a view $\overline \L= \overline \sigma^\ast pr_2^\ast \widetilde \L|_{\pi^{-1}(t)} \otimes \overline \sigma^{-1}[I_1,\dots, I_s] \cdot \OO_{\overline \Sigma}$, where $\sigma^{-1}[I_1,\dots, I_s]$ is some product of inverse images of $I_i[t']+(t')$, $i=1,\dots,s$. The restriction of $\overline \L$ to the zero fiber takes the view $\overline \L |_{\overline \pi^{-1} (0)}=\widetilde \L_\Delta|_{\pi_\Delta^{-1}(t)}$. The inverse image of $\widehat \sigma\!\!\!\sigma^\ast \widetilde \E |_{\pi_\Delta^{-1}(t)}$ has Hilbert polynomial $\chi(\widehat \sigma\!\!\!\sigma{'^\ast} \widetilde \E \otimes \widetilde \L_\Delta^n|_{\pi_\Delta^{-1}(t)})=\chi(\overline \sigma^\ast \widetilde\E|_{\pi^{-1}(t)}\otimes \overline \L^n|_{\overline \pi(0)})=\chi(\overline \sigma^\ast \widetilde\E|_{\pi^{-1}(t)}\otimes \overline \L^n|_{\overline \pi(u)})=\chi( \widetilde\E|_{\pi^{-1}(t)}\otimes \widetilde \L^n|_{\overline \pi(u)}),$ where $u\ne 0$.
By the construction, the binary operation $\diamond$ is commutative and it leads to M-equivalence of pairs which are obtained by resolutions of the same sheaf $E$ or of two S-equivalent sheaves $E,$ $E'$ on $S$.
%%%%%%%%%%%%%%%%%%%%%%%%%%%%%%%%%%%%%%%%%%%%%%%%%%%%%%%%%%%%%%%%%

\section{Moduli functors}\label{s5}

Following \cite[ch. 2, sect. 2.2]{HL}, we recall some definitions.
Let ${\mathcal C}$ be a category, ${\mathcal C}^o$ its dual,
 ${\mathcal C}'={{\mathcal F}unct}({\mathcal C}^o, Sets)$ a category of
 functors to the category of sets. By Yoneda's lemma, the functor  $${\mathcal C} \to
{\mathcal C}'\colon F\mapsto (\underline F\colon X\mapsto {\rm Hom
\,}_{{\mathcal C}}(X, F))$$ includes ${\mathcal C}$ into ${\mathcal
C}'$ as a full subcategory.

\begin{definition}\label{corep}\cite[ch. 2, Definition 2.2.1]{HL}
A functor  ${\mathfrak f} \in {\OO}b\, \CC'$ is {\it
corepresented by an object} $M \in {\OO}b \,\CC$, if there exist
a $\CC'$-morphism $\psi \colon {\mathfrak f} \to \underline M$ such
that any morphism $\psi'\colon {\mathfrak f} \to \underline F'$ factors
through the unique morphism  $\omega\colon \underline M \to \underline
F'$.
\end{definition}

\begin{definition} The scheme  $\widetilde M$ is a {\it coarse moduli space}
for the functor $\mathfrak f$ if  $\mathfrak f$ is corepresented
by the scheme $\widetilde M.$
 \end{definition}

We consider sets of families of semistable pairs
\begin{equation}\label{class}{\mathfrak F}_T= \left\{
\begin{array}{l}\pi\colon \widetilde \Sigma \to T \mbox{\rm \;\; is a birationally $S$-trivial family};\\
\widetilde \L\in \Pic \widetilde \Sigma \mbox{\rm \;\;is flat over
}T;\\
\mbox{\rm for }m\gg 0 \; \widetilde \L^m \; \mbox{\rm is very ample
relative to }T;\\ \forall t\in T \;\widetilde L_t=\widetilde
\L|_{\pi^{-1}(t)}
\mbox{\rm \; is ample;}\\
(\pi^{-1}(t),\widetilde L_t) \mbox{\rm \;is an admissible scheme with
a distinguished polarization}; \\
\chi (\widetilde L_t^n) \mbox{\rm \; does not depend on }t;\\
 \widetilde \E \;\; \mbox{\rm is a locally free } \OO_{\Sigma}\mbox{\rm
-sheaf flat over } T;\\
 \chi(\widetilde \E\otimes\widetilde \L^{n})|_{\pi^{-1}(t)})=
 rp(n);\\
 ((\pi^{-1}(t), \widetilde L_t), \widetilde \E|_{\pi^{-1}(t)}) 
 \mbox{\rm \;\;is a semistable pair}
 \end{array} \right\} \end{equation}  and a functor
\begin{equation}\label{funcmy}\mathfrak f\colon (Schemes_k)^o
\to (Sets)\end{equation} from the category of $k$-schemes to the
category of sets. It attaches to any scheme $T$ the set of
equivalence classes of families of the form $({\mathfrak
F}_T/\sim).$

The equivalence relation  $\sim$ is defined as follows. Families
 $((\pi\colon \widetilde \Sigma \to T, \widetilde \L),
\widetilde \E)$ and  $((\pi'\colon \widetilde \Sigma \to T, \widetilde
\L'), \widetilde
 \E')$ from the class  $\mathfrak F_T$ are said to be equivalent
 (notation:
 $((\pi\colon \widetilde \Sigma \to T, \widetilde \L),
 \widetilde \E) \sim ((\pi'\colon \widetilde \Sigma \to T, \widetilde \L'), \widetilde
 \E')$), if\\
 1) there exist an isomorphism $ \iota\colon \widetilde \Sigma \stackrel{\sim}{\longrightarrow}
 \widetilde \Sigma'$ such that the diagram \begin{equation*}
 \xymatrix{\widetilde \Sigma  \ar[rd]_{\pi}\ar[rr]_{\sim}^{\iota}&&\widetilde \Sigma ' \ar[ld]^{\pi'}\\
&T }
 \end{equation*} commutes;\\
 2) there exist line bundles   $L', L''$ on the scheme  $T$ such
 that $\iota^{\ast}\widetilde \E' = \widetilde \E \otimes \pi^{\ast} L',$
 $\iota^{\ast}\widetilde \L' = \widetilde \L \otimes \pi^{\ast} L''.$

Now we discuss what is the ``size'' of $\widetilde \Sigma_0$ which is a maximal under  inclusion
among the open sub\-schemes  in  $\widetilde \Sigma$ which are isomorphic to
appropriate open subschemes in $T\times S$ in Definition~\ref{bitriv}. The set $F=\widetilde \Sigma \setminus \widetilde
\Sigma_0$ is closed. If $T_0$ is an open subscheme in $T$ whose
points carry fibers isomorphic to $S$, then $\widetilde \Sigma_0
\supsetneqq \pi^{-1}T_0$ (inequality is true because in Definition~\ref{bitriv} $\pi
(\widetilde \Sigma_0)=T$ is required). The
subscheme~$\Sigma_0$, which is open in $T\times S$ and isomorphic
to $\widetilde \Sigma_0$, is such that $\Sigma_0 \supsetneqq
T_0\times S$. If $\pi\colon \widetilde \Sigma \to T$ is a family of
admissible schemes then $\widetilde \Sigma_0 \cong \widetilde
\Sigma \setminus F$, and $F$ is (set-theoretically) a union of
additional components of those fibers which are not isomorphic to $S$.
Particularly, this means that $\codim_{T\times S} (T\times
S)\setminus \Sigma_0 \ge 2.$

\smallskip

The Gieseker -- Maruyama functor
\begin{equation}\label{funcGM}{\mathfrak f}^{GM}\colon 
(Schemes_k)^o \to Sets,\end{equation} attaches to any scheme $T$
a set of equivalence classes of families of the following form
${\mathfrak F}_T^{GM}/\sim$, where
\begin{equation}\label{famGM} \mathfrak
F_T^{\,GM}= \left\{
\begin{array}{l} \E \;\;\mbox{\rm is a sheaf of } \OO_{T\times S}-
\mbox{\rm modules flat over } T;\\
\L \;\;\mbox{\rm is an invertible sheaf of } \OO_{T\times S}-\mbox{\rm
modules,}\\ \mbox{\rm  ample relatively to } T\\
\mbox{\rm and such that } L_t:=\L|_{t\times S}\cong L\; \mbox{\rm for any point } t\in T;\\
E_t:=\E|_{t\times S} \;\mbox{\rm is a torsion-free and Gieseker-semistable;}\\
\chi(E_t \otimes L_t^n)=rp(n).\end{array}\right\}
\end{equation}

The families   $(\E, \L)$ and $(\E',\L')$ from the class $\mathfrak
F^{GM}_T$ are said to be equivalent (notation: $ (\E, \L)\sim
(\E',\L')$), if there exist line bundles  $L', L''$ on the scheme
$T$ such that $\E' = \E \otimes p^{\ast} L',$ $ \L' =  \L \otimes
p^{\ast} L''$, where $p\colon T\times S \to T$ is the projection onto the
first factor.

\begin{remark} Since $\Pic (T \times S) =\Pic T \times \Pic
S$, our definition of the moduli functor ${\mathfrak f}^{GM}$ is
equivalent to the standard definition which can be found, for
example, in \cite{HL}: the difference in choice of polarizations
$\L$ and $\L'$ having isomorphic restrictions on corresponding fibers over the
base $T$ is avoided by the equivalence which is induced by
tensoring by the inverse image of an invertible sheaf $L''$ from the
base $T$.
\end{remark}
\begin{remark}
The procedure of standard resolution developed in \cite{Tim12} and outlined in Sec.2 provides a natural transformation $\underline \kappa\colon {\mathfrak f}^{GM} \to \mathfrak f$ mentioned in part ({\it i}) of Theorem \ref{thfunc}.
\end{remark}
\section{A Pairs-to-GM transformation}\label{s4}

%%%%%%%%%%%%%%%%%%%%%%%%%%%%%%%%%%%%%%%%%%%%%%%%%%%%%%%%%%%%%%%%%
%%%%%%%%%%%%%%%%%%%%%%%%%%%%%%%%%%%%%%%%%%%%%%%%%%%%%%%%%%%%%%%%%

Further we will show that there is a morphism of the nonreduced moduli
functor of admiss\-ible semistable pairs to the nonreduced
Gieseker--Maruyama moduli functor. Namely, for any scheme $T$ we
build up a correspondence $((\pi\colon \widetilde \Sigma \to T,
\widetilde \L), \widetilde \E)\mapsto (\L, \E)$. It leads to
 a mapping of sets
 $$(\{((\pi\colon \widetilde \Sigma \to T, \widetilde
\L), \widetilde \E)\}/\sim)\to (\{(\L,\E)\}/\sim).$$ This means
that a family of semistable coherent torsion-free sheaves $\E$
with the same base $T$ can be constructed by any family
$((\pi\colon \widetilde \Sigma \to T, \widetilde \L), \widetilde \E)$ of
admissible semistable pairs which is birationally trivial and flat
over $T$.

First we construct a $T$-morphism $\phi\colon \widetilde \Sigma \to
T\times S$. Since the family $\pi\colon \widetilde  \Sigma \to T$ is
birationally trivial, there is a fixed isomorphism $\phi_0 \colon 
\widetilde \Sigma_0 \stackrel{\sim}{\to}\Sigma_0$ of maximal open
subschemes $\widetilde \Sigma_0 \subset \widetilde \Sigma$ and
$\Sigma_0 \subset T\times S$. Define an invertible $\OO_{T\times
S}$-sheaf $\L$ by the equality
$$\L(U):= \widetilde \L(\phi_0^{-1}(U \cap \Sigma_0)).$$ Identifying
$\widetilde \Sigma_0$ with $\Sigma_0$ by the isomorphism $\phi_0$
one comes to the conclusion that sheaves $\L|_{\Sigma_0}$ and
$\widetilde \L|_{\widetilde \Sigma_0}$ are also isomorphic.

For each closed point $t\in T$ there is a canonical morphism of
the fiber $\sigma_t\colon \widetilde S_t \to S$, where $\widetilde
S_t=\pi^{-1}(t).$
\begin{proposition} \label{polariz} For any closed point $t\in T$ and for any open subset $V\subset S$
$$
\L\otimes (k_t\boxtimes \OO_S)(V)=\widetilde L_t(\sigma_t^{-1}(V)
\cap \widetilde \Sigma_0).
$$
In particular, $\L\otimes (k_t \boxtimes \OO_S)=L.$
\end{proposition}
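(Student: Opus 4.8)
The plan is to identify both sides with sections of the restricted invertible sheaf $\L|_{t\times S}$, by transporting everything to the ``trivial part'' of the fibre over $t$ and then using that its complement has codimension at least two in the nonsingular variety $S$. Concretely, $\L\otimes(k_t\boxtimes\OO_S)=\L|_{t\times S}$ is an invertible $\OO_{t\times S}$-sheaf, and $t\times S\cong S$ since $k_t=k$, so the left-hand side is $\Gamma(V,\L|_{t\times S})$. I would then restrict the trivialization $\phi_0\colon\widetilde\Sigma_0\xrightarrow{\sim}\Sigma_0$ to the fibre over $t$: set $\widetilde S_{0t}:=\widetilde\Sigma_0\cap\widetilde S_t$ and let $S_{0t}\subset S$ be the open subscheme corresponding to $\Sigma_0\cap(t\times S)$. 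Since $\phi_0$ is a $T$-isomorphism whose composite with the projection to $S$ restricts on fibres to the canonical morphisms, it induces an isomorphism $\sigma_t|_{\widetilde S_{0t}}\colon\widetilde S_{0t}\xrightarrow{\sim}S_{0t}$; and since, by the defining formula for $\L$, the sheaf $\L|_{\Sigma_0}$ corresponds under $\phi_0$ to $\widetilde\L|_{\widetilde\Sigma_0}$, restricting this correspondence to the fibre over $t$ gives an isomorphism $(\L|_{t\times S})|_{S_{0t}}\cong\widetilde L_t|_{\widetilde S_{0t}}$ through $\sigma_t$. Therefore
$$\widetilde L_t\bigl(\sigma_t^{-1}(V)\cap\widetilde\Sigma_0\bigr)=\widetilde L_t\bigl(\sigma_t^{-1}(V)\cap\widetilde S_{0t}\bigr)=\Gamma\bigl(V\cap S_{0t},\,\L|_{t\times S}\bigr),$$
and the statement of the proposition reduces to the equality $\Gamma(V\cap S_{0t},\L|_{t\times S})=\Gamma(V,\L|_{t\times S})$ for every open $V\subset S$.

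This last equality is a Hartogs-type extension statement. The sheaf $\L|_{t\times S}$ is locally free on the nonsingular (hence normal, $S_2$) variety $S$, so restriction of sections across any closed subset of codimension $\geq 2$ is bijective; hence it suffices to know $\codim_S(S\setminus S_{0t})\geq 2$. This I would deduce from the structure of admissible schemes: $\widetilde S_{0t}$ contains the preimage of the largest open $S^{\circ}\subset S$ over which the canonical morphism $\sigma_t$ of the admissible scheme $\widetilde S_t$ is an isomorphism (this is exactly the complement, inside $\widetilde S_t$, of the additional components and of the exceptional divisors of the blow-ups producing $\widetilde S_t$), and $S\setminus S^{\circ}$, being the union of the images in $S$ of those blow-up centres, has codimension $\geq 2$ because $\sigma_t$ is a proper birational morphism onto the normal variety $S$. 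Combining with the previous paragraph gives $\L\otimes(k_t\boxtimes\OO_S)(V)=\Gamma(V,\L|_{t\times S})=\Gamma(V\cap S_{0t},\L|_{t\times S})=\widetilde L_t(\sigma_t^{-1}(V)\cap\widetilde\Sigma_0)$, the asserted identity.

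For the final clause, recall that a distinguished polarization has the shape $\widetilde L_t=\sigma_t^{\ast}L\otimes\mathrm{Exc}$ with the exceptional factor trivial away from the exceptional loci, hence trivial on $\widetilde S_{0t}$; thus $\widetilde L_t|_{\widetilde S_{0t}}=\sigma_t^{\ast}(L|_{S_{0t}})$, and pulling back through the isomorphism $\sigma_t|_{\widetilde S_{0t}}$ yields $(\L|_{t\times S})|_{S_{0t}}\cong L|_{S_{0t}}$. Two line bundles on the nonsingular variety $S$ that agree on the open set $S_{0t}$ with codimension-$\geq 2$ complement are isomorphic: the comparison isomorphism over $S_{0t}$ is a nowhere-vanishing section of the line bundle $\HHom(\L|_{t\times S},L)$, which extends over all of $S$ by Hartogs and stays nowhere vanishing because its zero divisor would be supported on the codimension-$\geq 2$ set $S\setminus S_{0t}$. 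Hence $\L\otimes(k_t\boxtimes\OO_S)\cong L$.

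The main obstacle is the fibrewise codimension estimate $\codim_S(S\setminus S_{0t})\geq 2$: it is not formally implied by the global estimate $\codim_{T\times S}((T\times S)\setminus\Sigma_0)\geq 2$ recorded earlier (a codimension-one subset of a fibre need not be codimension one in the total space), so one genuinely has to invoke the structure theory of admissible schemes from \cite{Tim12}, namely that the canonical morphism of an admissible scheme is an isomorphism off a subset of codimension $\geq 2$ and that $\widetilde\Sigma_0$ contains, fibrewise, the preimage of that open isomorphism locus. A secondary, purely bookkeeping point is to verify that $\phi_0$ is compatible over $T$ with the fibrewise canonical morphisms $\sigma_t$, so that $\sigma_t|_{\widetilde S_{0t}}$ is precisely the isomorphism cut out by $\phi_0$; this is part of what it means for a family of admissible pairs to be birationally trivial.
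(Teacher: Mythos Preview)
Your argument is correct and follows essentially the same idea as the paper: transport sections through the isomorphism $\phi_0$ to the trivial locus, then extend across a codimension-$\geq 2$ complement. The organization differs slightly. The paper computes directly at the presheaf level on $T\times S$, invoking first the global estimate $\codim_{T\times S}(T\times S\setminus\Sigma_0)\geq 2$ to rewrite $\OO_{T\times S}(U)$ and $(k_t\boxtimes\OO_S)(U\cap t\times S)$ over $U\cap\Sigma_0$, and only then restricts to the fibre. You instead pass to the fibre $t\times S\cong S$ immediately and phrase the extension step as a Hartogs argument for the invertible sheaf $\L|_{t\times S}$ on the nonsingular variety $S$. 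Your route is arguably cleaner, since the Hartogs step is unambiguous on $S$ whereas the paper's global extension $\OO_{T\times S}(U)=\OO_{T\times S}(U\cap\Sigma_0)$ tacitly uses more about $T\times S$.

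Your flagged ``main obstacle''---the fibrewise estimate $\codim_S(S\setminus S_{0t})\geq 2$---is not a gap: the paper records exactly this fact in the discussion following Definition~\ref{bitriv} (``for any closed point $t\in T$, $\codim\,\Delta\cap(t\times S)\geq 2$''), so you may cite it rather than rederive it from the structure of admissible schemes. Your treatment of the final clause, via $\widetilde L_t=\sigma_t^{\ast}L\otimes\mathrm{Exc}$ and extension of the comparison isomorphism across codimension $\geq 2$, is more explicit than the paper's one-line conclusion and is a welcome addition.
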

\begin{proof} The restriction $\L|_{t\times S}$ is the sheaf associated
to the presheaf $$V \mapsto \L(U) \otimes _{\OO_{T\times S}(U)}
(k_t\boxtimes \OO_S)(U\cap t\times S),$$
 for any open subset $U\subset T\times S$ such that $U\cap (t\times S)=V.$
Since $\codim T\times S \setminus \Sigma_0 \ge 2,$
$$\OO_{
T\times S}(U)=\OO_{T\times S}(U \cap \Sigma_0)$$ and
$$(k_t \boxtimes \OO_S)(U\cap t\times S)=
(k_t \boxtimes \OO_S)(U\cap \Sigma_0\cap t\times S)
=\OO_{\widetilde S_t}(\phi_0^{-1}(U\cap \Sigma_0)\cap \pi^{-1}
(t)).$$ Hence, $\L|_{t\times S}$ is associated with the presheaf $$
V\mapsto \widetilde \L(\phi_0^{-1}(U\cap \Sigma_0))
\otimes_{\OO_{\widetilde \Sigma}(\phi_0^{-1}(U\cap \Sigma_0))}
\OO_{\pi^{-1}(t)}(\phi_0^{-1}(U\cap \Sigma_0)\cap \pi^{-1}(t)),
$$ or, equivalently,
$$V\mapsto
\widetilde L_t (\phi_0^{-1}(U\cap \Sigma_0)\cap \widetilde S_t)
=L(U\cap \Sigma_0\cap t\times S)=L(U\cap t\times S).$$ We keep in
mind that $\phi_0^{-1}(U\cap \Sigma_0)\cap \widetilde
S_t=\sigma_t^{-1}(V)\cap \widetilde \Sigma_0$, what completes the
proof.\end{proof}
%%%%%%%%%%%%%%%%%%%%%%%%%%%%%%%%%%%%%%%%%%%%%%%%%%%%%%%%%%%%%%%%%
%%%%%%%%%%%%%%%%%%%%%%%%%%%%%%%%%%%%%%%%%%%%%%%%%%%%%%%%%%%%%%%%%

Define a sheaf $\L'$ by the correspondence $U \mapsto \widetilde
\L(U\cap \widetilde \Sigma_0)$ for any open subset $U\subset \widetilde
\Sigma.$ It carries a natural structure of an invertible
$\OO_{\widetilde \Sigma}$-module. This structure is induced by the
commutative diagram
$$\xymatrix{\OO_{\widetilde \Sigma}(U)\times \L'(U)
\ar[d]_{res} \ar[r]&\L'(U) \ar@{=}[d]\\
\OO_{\widetilde \Sigma}(U\cap \widetilde \Sigma_0)\times
\widetilde \L(U\cap \widetilde \Sigma_0)\ar[r] & \widetilde
\L(U\cap \widetilde \Sigma_0)}
$$
where the vertical arrow is induced by the natural restriction map in the structure sheaf
$\OO_{\widetilde \Sigma}$. Compare the direct images $p_{\ast}\L$ and
$\pi_{\ast} \L'$; for any open subset $V\subset T$ one has
$$p_{\ast}\L (V)=\L(p^{-1}V)=\L(p^{-1}V \cap \Sigma_0).
$$
By the definition of $\L'$
$$
\L(p^{-1}V \cap \Sigma_0)=\widetilde \L(\pi^{-1}V \cap \widetilde
\Sigma_0)=\L'(\pi^{-1}V)=\pi_{\ast}\L'(V).
$$
Now $\pi_{\ast} \L'=p_{\ast} \L$.

The invertible sheaf $\L'$ induces a morphism $\phi'\colon \widetilde
\Sigma \to \P(\pi_{\ast}\L')^\vee$ which is included into the
commutative diagram of $T$-schemes
$$\xymatrix{\widetilde\Sigma \ar[rr]^{\phi'}&&\P(\pi_{\ast}\L')^{\vee} \ar@{=}[dd] \\
\widetilde \Sigma_0=\Sigma_0 \ar@{^(->}[u] \ar@{_(->}[d]\\
T\times S \ar@{^(->}[rr]^{i_{\L}}&& \P(p_{\ast}\L)^{\vee}}
$$
where $i_{\L}$ is a closed immersion induced by $\L$ and
$\phi'|_{\widetilde \Sigma_0}$ is also an immersion. From now we
identify $\P(\pi_{\ast}\L')^{\vee}$ with $\P(p_{\ast}\L)^{\vee}$
and use the common notation $\P$ for these projective bundles. Formation of
scheme closures of the images of $\widetilde \Sigma_0$ and $\Sigma_0$
in $\P$ leads to $\overline{\phi'(\widetilde
\Sigma_0)}=\overline{i_\L(T\times S)} =T\times S.$ Also by the
definition of the sheaf $\L'$ for any open subset $U\subset \widetilde
\Sigma$ and $V\subset T\times S$ such that $U\cap \widetilde
\Sigma_0 \cong V\cap \Sigma_0$ the following chain of equalities
holds: \begin{equation} \label{corrsec} \L'(U)= \L'(U\cap
\widetilde \Sigma_0)= \L(V\cap \Sigma_0)=\L(V).\end{equation} Now
for a moment we suppose that $T$ is affine scheme: let $T=\Spec A$ for some
commutative algebra $A$, $\P=\Proj A[x_0:\dots :x_N]$ where
$x_0,\dots,x_N\in H^0(\P, \OO_\P (1))$ generate $\OO_\P (1)$.
The images ${\phi'}^{\ast} x_i=s_i',$ $i=0,\dots, N$, generate $\L'$
along $\widetilde \Sigma_0$ and they are not obliged to generate
$\L'$ along the whole of $\widetilde \Sigma.$ The images $i_\L^{\ast}
x_i=s_i$, $i=0,\dots, N$, generate $\L$ along the whole of
$T\times S$ and provide $i_\L$ to be a closed immersion.

We pass to the standard affine covering by $\P_i=\Spec A[x_0,\dots,
\hat x_i,\dots, x_N],$ $i=0, \dots, N$, and $\hat{\phantom q}$
means omitting the symbol below. Denote $(i_\L(T\times
S))_i:=i_\L(T\times S)\cap \P_i$ and $(\phi'(\widetilde
\Sigma))_i:=\phi'(\widetilde \Sigma) \cap \P_i.$ Set also
$(T\times S)_i:=i_\L ^{-1}(i_\L(T\times S))_i$ and $\widetilde
\Sigma_i:={\phi'}^{-1}(\phi'(\widetilde \Sigma))_i$. Now we have
the mappings $$ A[x_0,\dots, \hat x_i,\dots, x_N] \rightarrow
\Gamma(\widetilde \Sigma_i, \L')\colon x_j \mapsto s'_j
$$
and
$$
A[x_0,\dots, \hat x_i,\dots, x_N] \twoheadrightarrow
\Gamma((T\times S)_i, \L)\colon x_j \mapsto s_j
$$
included into the triangular diagram
\begin{equation}\label{diasec}\xymatrix{A[x_0,\dots, \hat x_i,\dots, x_N]
\ar@{->>}[rd] \ar@{->>}[r]& \Gamma((T\times S)_i, \L) \ar@{=}[d]\\
&\Gamma(\widetilde \Sigma_i, \L')}\end{equation} where the
vertical sign of equality means the bijection (\ref{corrsec})
rewritten for the covering under the scope. Commutativity of
(\ref{diasec}) implies that $\phi'$ factors through $i_\L(T\times
S)$, i.e. $\phi'(\widetilde \Sigma) = i_\L (T\times S).$

Now, identifying $i_\L(T\times S)$ with $T\times S$ by means of
the  obvious isomorphism, we arrive to the $T$-morphism
$$ \phi\colon \widetilde \Sigma \to T\times S.
$$
It coincides with $\phi_0\colon \widetilde \Sigma_0
\stackrel{\sim}{\to} \Sigma_0$ when restricted to $\widetilde
\Sigma_0.$

For $n>0$ consider an invertible $\OO_{T\times S}$-sheaf $U
\mapsto \widetilde \L^n(\phi_0^{-1}(U \cap \Sigma_0))$. It
coincides with $\L^n$ on $\Sigma_0$ and hence it coincides with $\L^n$
on the whole of $T\times S$.
%%%%%%%%%%%%%%%%%%%%%%%%%%%%%%%%%%%%%%%%%%%%%%%%%%%%%%%%%%%%%%%%%

Now there is a commutative triangle
\begin{equation*}\xymatrix{\widetilde \Sigma \ar[r]^\phi
\ar[rd]_\pi &T\times S \ar[d]^p\\&T}
\end{equation*}

Firstly note that $T$ contains at least one closed point, say
$t\in T$; let $\widetilde S_t=\pi^{-1}(t)$ be the corresponding
closed fiber and $\widetilde L_t=\widetilde \L|_{\widetilde S_t}$
and $\widetilde E_t=\widetilde \E|_{\widetilde S_t}$ the restrictions
of the sheaves onto this fiber. By the definition of admissible scheme there is
a canonical morphism $\sigma_t\colon \widetilde S_t \to S$. Then
$(\sigma_{t\ast} \widetilde L_t)^{\vee \vee}=L.$ Indeed, both sheaves $(\sigma_{t\ast} \widetilde L_t)^{\vee \vee}$ and $L$ are reflexive  on the nonsingular variety $S$. Also $\sigma_{t\ast} \widetilde L_t/\tors$ is a sheaf of ideals tensored by some invertible sheaf, hence $(\sigma_{t\ast} \widetilde L_t)^{\vee \vee}$ is invertible. Both the sheaves coincide off a closed subset of codimension $\ge 2$. Hence they coincide on the whole of $S.$

Secondly, the family $\pi\colon \widetilde \Sigma \to T$ is
birationally trivial, i.e. there exist isomorphic open subschemes
$\widetilde \Sigma_0 \subset \widetilde \Sigma$ and $\Sigma_0
\subset T\times S$  and
there is a scheme equality $\pi(\widetilde \Sigma_0)=T$. Note that the ``boundary'' $\Delta=T\times S
\setminus \Sigma_0$  has codimension $\ge 2$ and that for any
closed point $t\in T$ $\codim \Delta\cap (t\times S)\ge 2.$

Thirdly, the morphism of multiplication of sections $$
(\sigma_{t\ast} \widetilde L_t)^n \to \sigma_{t\ast} \widetilde
L_t^n
$$
induces the morphism of reflexive hulls
$$
((\sigma_{t\ast} \widetilde L_t)^n )^{\vee \vee}\to
(\sigma_{t\ast} \widetilde L_t^n)^{\vee \vee},
$$
which are normal sheaves on a nonsingular variety and coincide apart
from a subset of codimension $\ge 2$. Hence they are equal on the whole of $S$. Also the sheaf
$((\sigma_{t\ast} \widetilde L_t)^{\vee \vee})^n= L^n$ coincides
with them by the analogous reason. Then for all $n>0$
$$
((\sigma_{t\ast} \widetilde L_t)^n )^{\vee \vee} =L^n.
$$

Now take a product $\Sigma=\A^1 \times S \stackrel{p}{\longrightarrow}\A^1$,  $\A^1 =\Spec k[u]$. Let $\sigma\!\!\!\sigma\colon \widetilde \Sigma \to \Sigma$ be the sequence of blowups whose loci do not dominate $\A^1$ and $\widetilde S$ be the zero fiber $\widetilde S=(p\circ \sigma\!\!\!\sigma)^{-1}(0)$. In this case a general fiber is isomorphic to $S$, $\widetilde \Sigma \stackrel{\pi}{\longrightarrow} \A^1$ is a flat family of admissible schemes and hence $H^0(\widetilde S, \widetilde L^n)= h^0(S, L^n)$ for $n\gg 0.$

%%%%%%%%%%%%%%%%%%%%%%%%%%%%%%%%%%%%%%%%%%%%%%%%%%%%%%%%%%%%%%%%%

\begin{proposition} There are morphisms of $\OO_{T\times S}$
-sheaves $$ \sigma\!\!\!\sigma_{\ast} \widetilde \L^n \to \L^n$$ for all $n>0$.
\end{proposition}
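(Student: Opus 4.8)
The plan is to produce this morphism as the restriction‑of‑sections map onto the birationally trivial open locus and then to observe that its target is canonically $\L^n$. Recall the set‑up: writing $\Sigma=T\times S$ and letting $\sigma\!\!\!\sigma\colon\widetilde\Sigma\to\Sigma$ be the morphism constructed above (the map $\phi$ for a general base, resp. the iterated blow\-up in the one–parameter case), $\sigma\!\!\!\sigma$ restricts to an isomorphism $\sigma\!\!\!\sigma_0:=\sigma\!\!\!\sigma|_{\widetilde\Sigma_0}\colon\widetilde\Sigma_0\stackrel{\sim}{\to}\Sigma_0$ onto the maximal birationally trivial open subscheme, with $\codim_\Sigma(\Sigma\setminus\Sigma_0)\ge 2$; and $\L$ was defined precisely so that, via $\sigma\!\!\!\sigma_0$, one has $\L|_{\Sigma_0}\cong\widetilde\L|_{\widetilde\Sigma_0}$, while — as recorded after Proposition~\ref{polariz} — for each $n>0$ the invertible $\OO_\Sigma$-sheaf $U\mapsto\widetilde\L^n(\sigma\!\!\!\sigma^{-1}U\cap\widetilde\Sigma_0)$ coincides with $\L^n$.

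First I would introduce the open immersions $j\colon\Sigma_0\hookrightarrow\Sigma$ and $\widetilde j\colon\widetilde\Sigma_0\hookrightarrow\widetilde\Sigma$, noting $\sigma\!\!\!\sigma\circ\widetilde j=j\circ\sigma\!\!\!\sigma_0$. Pushing the unit of adjunction $\widetilde\L^n\to\widetilde j_\ast(\widetilde\L^n|_{\widetilde\Sigma_0})$ forward by $\sigma\!\!\!\sigma_\ast$ and using $\sigma\!\!\!\sigma_\ast\widetilde j_\ast=j_\ast\sigma\!\!\!\sigma_{0\ast}$ together with $\sigma\!\!\!\sigma_{0\ast}(\widetilde\L^n|_{\widetilde\Sigma_0})\cong\L^n|_{\Sigma_0}$ yields a morphism $\sigma\!\!\!\sigma_\ast\widetilde\L^n\to j_\ast(\L^n|_{\Sigma_0})$. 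Concretely, over an open $U\subseteq\Sigma$ this is the restriction $\widetilde\L^n(\sigma\!\!\!\sigma^{-1}U)\to\widetilde\L^n(\sigma\!\!\!\sigma^{-1}U\cap\widetilde\Sigma_0)$, which is $\OO_\Sigma$-linear because the module structure on the target factors through $\OO_\Sigma(U)\to\OO_\Sigma(U\cap\Sigma_0)\cong\OO_{\widetilde\Sigma}(\sigma\!\!\!\sigma^{-1}U\cap\widetilde\Sigma_0)$. Next I would identify $j_\ast(\L^n|_{\Sigma_0})$ with $\L^n$: since $\L^n$ is invertible and $\codim_\Sigma(\Sigma\setminus\Sigma_0)\ge 2$, the canonical map $\L^n\to j_\ast(\L^n|_{\Sigma_0})$ is an isomorphism — this is exactly the assertion, used above, that the sheaf $U\mapsto\widetilde\L^n(\sigma\!\!\!\sigma^{-1}U\cap\widetilde\Sigma_0)$ equals $\L^n$, and over a nonsingular total space such as $\Sigma=\A^1\times S$ it is immediate from normality. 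Composing, one obtains the required $\OO_{T\times S}$-morphism $\sigma\!\!\!\sigma_\ast\widetilde\L^n\to\L^n$ for every $n>0$.

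The main (and essentially only) obstacle is that last identification, i.e. the claim that sections of the invertible sheaf $\L^n$ extend uniquely across the ``boundary'' $\Sigma\setminus\Sigma_0$; this is where the codimension bound $\codim_\Sigma(\Sigma\setminus\Sigma_0)\ge 2$ and the normality of the ambient total space (available in the curve–base situation) are used, and both were already built into the construction of $\L$. I would finish by remarking that these morphisms are compatible with multiplication of sections, hence assemble into a homomorphism of graded $\OO_\Sigma$-algebras $\bigoplus_{n\ge 0}\sigma\!\!\!\sigma_\ast\widetilde\L^n\to\bigoplus_{n\ge 0}\L^n$, which is what is needed downstream to produce the associated Gieseker--Maruyama family.
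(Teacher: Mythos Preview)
Your argument is correct and is essentially the same as the paper's: both produce the map on an open $U\subset T\times S$ as the restriction $\widetilde\L^n(\sigma\!\!\!\sigma^{-1}U)\to\widetilde\L^n(\sigma\!\!\!\sigma^{-1}U\cap\widetilde\Sigma_0)$ and then invoke the earlier identification $\big(U\mapsto\widetilde\L^n(\sigma\!\!\!\sigma^{-1}U\cap\widetilde\Sigma_0)\big)=\L^n$. Your phrasing via the adjunction unit and $\sigma\!\!\!\sigma_\ast\widetilde j_\ast=j_\ast\sigma\!\!\!\sigma_{0\ast}$ is a more categorical packaging of exactly that restriction map; the paper simply writes the chain of equalities directly.
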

\begin{proof}
For any open $U\in T\times S$ and any $n>0$ there is a restriction
map of sections $res\colon (\sigma\!\!\!\sigma_{\ast} \widetilde \L^n)(U)\to
(\sigma\!\!\!\sigma_{\ast} \widetilde \L^n)(U\cap \Sigma_0).$ Denoting as
usually the preimage $\sigma\!\!\!\sigma^{-1}(\Sigma_0)$ by $\widetilde
\Sigma_0$ (recall that $\sigma\!\!\!\sigma|_{\widetilde \Sigma_0}$ is an
isomorphism) one arrives to the chain of equalities:
$$
(\sigma\!\!\!\sigma_{\ast} \widetilde \L^n)(U\cap \Sigma_0)= \widetilde \L^n
(\sigma\!\!\!\sigma^{-1}(U\cap \Sigma_0))=
 \L^n (U).
$$
\end{proof}
\begin{remark}
Also the step of the descent can be taken of any length. Denoting by $\sigma\!\!\!\sigma_{[i]}$ the composite 
$\Sigma_i \stackrel{\sigma\!\!\!\sigma_i}{\longrightarrow} \dots \stackrel{\sigma\!\!\!\sigma_1}{\longrightarrow} \Sigma$ one can repeat the previous proof and come to  morphisms  of $\OO_{\Sigma}$
-sheaves $$ \sigma\!\!\!\sigma_{[i]\ast} \widetilde \L_i^n \to \L^n$$ for all $n>0$. 
\end{remark}
Applying $p_{\ast}$ yields 
\begin{corollary} For $n>0$ the morphisms $
\sigma\!\!\!\sigma_{[i]\ast} \widetilde \L_i^n \to \L^n$ induce isomorphisms of
$\OO_T$-sheaves
$$\pi_{i\ast} \widetilde \L_i^n \stackrel{\sim}{\to} p_{\ast}\L^n.
$$
\end{corollary}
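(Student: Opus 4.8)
The plan is to deduce the corollary from the preceding proposition (and its remark) by applying the functor $p_\ast$ and then checking that the resulting morphism of $\OO_T$-sheaves is in fact an isomorphism, fibrewise over $T$. First I would record the setup: we have the blowup tower $\Sigma_i \stackrel{\sigma\!\!\!\sigma_i}{\longrightarrow}\dots\stackrel{\sigma\!\!\!\sigma_1}{\longrightarrow}\Sigma$ with composite $\sigma\!\!\!\sigma_{[i]}$, the projection $\pi_i = p\circ\sigma\!\!\!\sigma_{[i]}\colon\Sigma_i\to T$, and the natural map of $\OO_\Sigma$-sheaves $\sigma\!\!\!\sigma_{[i]\ast}\widetilde\L_i^n\to\L^n$ from the Remark. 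Applying $p_\ast$ and using $p_\ast\sigma\!\!\!\sigma_{[i]\ast}=\pi_{i\ast}$ gives a canonical morphism $\pi_{i\ast}\widetilde\L_i^n\to p_\ast\L^n$; the task is to show it is an isomorphism for $n>0$.

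The key step is a fibrewise comparison using cohomology and base change. Both $\widetilde\L_i$ and $\L$ are very ample relative to $T$ (for the relevant tensor power; one replaces $n$ by a large multiple if needed, or invokes the uniformity of fibrewise Hilbert polynomials built into the construction), the families $\pi_i$ and $p$ are flat and projective, and the fibrewise Hilbert polynomials $\chi(\widetilde\L_i^n|_{\pi_i^{-1}(t)})$ and $\chi(L^n)$ agree by Proposition (the distinguished-polarization proposition) — that is exactly the content of the equalities $\chi(\widetilde L^n)=\chi(L^n)$ recorded in Section~\ref{s2}. Hence by \cite[ch.~III, Corollary~12.9]{Hart} (cohomology and base change, applied as in the proof of Proposition~\ref{sectionsrel}) both $\pi_{i\ast}\widetilde\L_i^n$ and $p_\ast\L^n$ are locally free $\OO_T$-modules of the same rank, and they commute with restriction to fibres:
\begin{equation*}
\pi_{i\ast}\widetilde\L_i^n\otimes k_t\cong H^0(\pi_i^{-1}(t),\widetilde\L_i^n|_{\pi_i^{-1}(t)}),\qquad p_\ast\L^n\otimes k_t\cong H^0(t\times S,L^n).
\end{equation*}
So it suffices to check that the induced map on fibres is an isomorphism of $k$-vector spaces, and since source and target have the same (finite) dimension, it is enough to check injectivity — or surjectivity — on each fibre.

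The fibrewise statement is essentially the $\widetilde\L$-only analogue of Proposition~\ref{sectionsrel}: for each closed point $t\in T$ the canonical morphism $\sigma_t\colon\pi_i^{-1}(t)\to t\times S$ is an isomorphism over the open complement of a closed set of codimension $\ge 2$ (in $t\times S$), and $\widetilde\L_i^n|_{\pi_i^{-1}(t)}$ pulls back from $L^n$ over that open set. Restriction of sections to this common open locus is injective on both sides, and the two restriction images coincide there; since $S$ is nonsingular and $L^n$ is locally free, $H^0(t\times S,L^n)$ equals the sections over the big open subset (Hartogs / normality), which forces the fibrewise map to be bijective. Feeding this back through cohomology and base change gives that $\pi_{i\ast}\widetilde\L_i^n\to p_\ast\L^n$ is a map of locally free sheaves inducing an isomorphism on every fibre, hence an isomorphism. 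The main obstacle I anticipate is purely bookkeeping: making sure the polarizations are normalized (the $m_j$-powers absorbed as in Section~\ref{s2}) so that "very ample relative to $T$" and "uniform fibrewise Hilbert polynomial" hold for the actual exponent $n$ in play, rather than only for a large multiple; once that normalization is in place the argument is a direct transcription of the base-change reasoning already used in Proposition~\ref{sectionsrel}.
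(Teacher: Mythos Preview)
Your proposal is correct and follows essentially the same approach as the paper: apply $p_\ast$ to the sheaf morphism, observe both sides are locally free of equal rank, and verify the fibrewise maps $H^0(\widetilde S_{i,t},\widetilde L_{i,t}^n)\to H^0(t\times S,L^n)$ are isomorphisms. The paper's proof is considerably terser (it simply asserts the fibrewise map is an isomorphism), whereas you spell out the Hartogs/restriction argument in the style of Proposition~\ref{sectionsrel}; this extra detail is sound and makes explicit what the paper leaves implicit.
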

\begin{proof} Both sheaves $\pi_{i\ast} \widetilde \L_i^n$ and  $p_{\ast}\L^n$ are
locally free and have equal ranks. Passing to fiberwise
consideration one gets a $k_t$-homomorphism $\pi_{i\ast} \widetilde \L_i^n \otimes k_t \to
p_{\ast}\L^n \otimes k_t$ or, equivalently, $H^0(\widetilde S_{i,t},
\widetilde L^n_{it})\to H^0(t\times S, L^n)$. This map is an
isomorphism and hence $\pi_{i\ast} \widetilde \L_i^n
\stackrel{\sim}{\to} p_{\ast}\L^n$.\end{proof}
%%%%%%%%%%%%%%%%%%%%%%%%%%%%%%%%%%%%%%%%%%%%%%%%%%%%%%%%%%%%%%%%
%%%%%%%%%%%%%%%%%%%%%%%%%%%%%%%%%%%%%%%%%%%%%%%%%%%%%%%%%%%%%%%%

We will need sheaves $$\widetilde \V_m=\pi_{\ast}(\widetilde \E
\otimes \widetilde \L^m)$$ for $m\gg 0$ such that $\widetilde
\V_m$ are locally free of rank $rp(m)$ and $\widetilde \E \otimes
\widetilde \L^m$ are fiberwise globally generated in such sense
that the canonical morphisms
$$\pi^{\ast} \widetilde \V_m \to \widetilde
\E \otimes \widetilde \L^m$$ are surjective for those $m$'s.

Let also for $m\gg 0$ $$\EE_m=\sigma\!\!\!\sigma_{\ast} (\widetilde \E \otimes
\widetilde \L^m),$$ now $$p_{\ast} \EE_m = p_{\ast}
\sigma\!\!\!\sigma_{\ast}(\widetilde \E\otimes \widetilde \L^m)=
\pi_{\ast}(\widetilde \E\otimes \widetilde \L^m)=\widetilde
\V_m.$$

We intend to confirm  that the sheaves $p_{\ast} (\EE_m \otimes \L^n)$
are locally free of rank~$rp(m+n)$ for all $m,n\gg 0$. This
implies $T$-flatness of $\EE_m$.

To proceed further we need morphisms $\widetilde \L^n \to
\sigma\!\!\!\sigma^{\ast} \L^n,$ $n>0.$

\begin{proposition} \label{injinv} For all $n>0$ there are injective morphisms
$\iota_n\colon \widetilde \L^n \to \sigma\!\!\!\sigma^{\ast}\L^n$ of invertible
$\OO_{\widetilde \Sigma}$-sheaves.
\end{proposition}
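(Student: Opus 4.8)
The plan is to construct the morphism $\iota_n$ locally, as the natural inclusion of sheaves of sections over the open set where $\sigma\!\!\!\sigma$ is an isomorphism, and then to check that this inclusion extends over all of $\widetilde\Sigma$ because the complement is small. Recall that $\widetilde\L = \sigma\!\!\!\sigma^\ast\L \otimes \E\mathrm{xc}$, where $\E\mathrm{xc}$ is the product of (powers of) inverse-image ideal sheaves of the exceptional loci built up in~(\ref{polarizfam}). Each factor $(\sigma\!\!\!\sigma_i)^{-1}\I_i\cdot\OO_{\Sigma_i}$ is, by construction, a sheaf of ideals inside $\OO_{\Sigma_i}$; pulling back along the remaining blowups keeps it a sheaf of ideals (an inverse-image ideal sheaf is again an ideal sheaf). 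Hence $\E\mathrm{xc}$ is a sheaf of ideals in $\OO_{\widetilde\Sigma}$, i.e. there is a canonical inclusion $\E\mathrm{xc}\hookrightarrow\OO_{\widetilde\Sigma}$. Tensoring this inclusion with the invertible sheaf $\sigma\!\!\!\sigma^\ast\L^n$ (and doing the same for the $n$-th power, which only multiplies the ideal factors) gives the desired injection
$$
\iota_n\colon \widetilde\L^n = \sigma\!\!\!\sigma^\ast\L^n\otimes \E\mathrm{xc}^{\,n} \hookrightarrow \sigma\!\!\!\sigma^\ast\L^n\otimes\OO_{\widetilde\Sigma} = \sigma\!\!\!\sigma^\ast\L^n.
$$
Injectivity is preserved because we are tensoring a monomorphism of $\OO_{\widetilde\Sigma}$-modules by a locally free (hence flat) sheaf.

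First I would spell out precisely why $\E\mathrm{xc}$ is an ideal sheaf: at each stage $(\sigma\!\!\!\sigma_i)^{-1}\I_i\cdot\OO_{\Sigma_i}$ is by definition the image of $\sigma\!\!\!\sigma_i^\ast\I_i\to\OO_{\Sigma_i}$, hence an ideal; the product of ideals is an ideal; and further pullbacks $\sigma\!\!\!\sigma_{i+1}^{-1}(-)\cdot\OO_{\Sigma_{i+1}}$ again produce ideals. So the shorthand (\ref{polshortexc}) really does define a subsheaf of $\OO_{\widetilde\Sigma}$. Then I would note that $\E\mathrm{xc}^{\,n}\subset\E\mathrm{xc}\subset\OO_{\widetilde\Sigma}$, so it suffices to tensor the composite inclusion by the line bundle $\sigma\!\!\!\sigma^\ast\L^n$. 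Alternatively, and perhaps more transparently for the reader, one can argue fiber-theoretically together with the birational-triviality: over $\widetilde\Sigma_0\cong\Sigma_0$ the morphism $\sigma\!\!\!\sigma$ is an isomorphism, so $\widetilde\L^n|_{\widetilde\Sigma_0}\cong\sigma\!\!\!\sigma^\ast\L^n|_{\widetilde\Sigma_0}$ and $\iota_n$ is simply the identity there; since $\sigma\!\!\!\sigma^\ast\L^n$ is locally free and $\widetilde\Sigma$ is (by construction) $S_2$ or at least has no embedded components along the exceptional locus, a section of $\sigma\!\!\!\sigma^\ast\L^n$ defined off a codimension-$\ge 2$ set extends — but this line of argument is more delicate, so I would prefer the direct ideal-sheaf construction.

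The main obstacle — really the only subtle point — is confirming that the individual ideal factors assemble into a genuine $\OO_{\widetilde\Sigma}$-submodule of the invertible sheaf $\sigma\!\!\!\sigma^\ast\L^n$, rather than merely a fractional-ideal-like object. Concretely, one must check there is no cancellation or denominator appearing when passing from $\Sigma_i$ to $\widetilde\Sigma$ through the interchanging squares~(\ref{interch}); this is exactly the content of the identities for $\sigma\!\!\!\sigma_{[i]}$, $\sigma\!\!\!\sigma^0_{[i]}$, $\delta\!\!\delta^0_{[i]}$ recalled after~(\ref{v}), and it is handled there. Granting those, $\E\mathrm{xc}$ is an honest ideal sheaf and $\iota_n$ is obtained for free; the injectivity is then automatic from flatness of $\sigma\!\!\!\sigma^\ast\L^n$. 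I would also remark, for later use, that $\coker\iota_n$ is supported on the exceptional locus $\bigcup_{i>0}\widetilde\Sigma_i$, which is precisely why applying $\sigma\!\!\!\sigma_\ast$ (or $\pi_\ast$) to $\iota_n$ will later produce isomorphisms on global sections after twisting by $\L^m$, $m\gg0$.
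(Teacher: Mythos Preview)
Your argument is algebraically clean, but it is placed in the wrong context. Proposition~\ref{injinv} sits in Section~6 (the Pairs-to-GM direction), where one starts from an \emph{arbitrary} family $((\pi\colon\widetilde\Sigma\to T,\widetilde\L),\widetilde\E)\in\mathfrak F_T$ as described in~(\ref{class}), and the invertible sheaf $\L$ on $T\times S$ is \emph{defined a posteriori} from $\widetilde\L$ via $\L(U):=\widetilde\L(\phi_0^{-1}(U\cap\Sigma_0))$. The identity $\widetilde\L=\sigma\!\!\!\sigma^\ast\L\otimes\E\mathrm{xc}$ you invoke is formula~(\ref{polarizfam}) from Section~2, established there only for families produced by applying the standard resolution to a $T$-flat family of coherent sheaves; in that setting the chain of morphisms $\sigma\!\!\!\sigma_i$ and the ideals $\I_i$ exist on the total space by construction. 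For a general member of $\mathfrak F_T$ one only knows that each closed fiber carries a distinguished polarization of the shape~(\ref{polshort}); there is no global chain of blowups $\sigma\!\!\!\sigma_i$ available on $\widetilde\Sigma$, so the object you call $\E\mathrm{xc}$ is not defined on $\widetilde\Sigma$, and the inclusion $\E\mathrm{xc}\hookrightarrow\OO_{\widetilde\Sigma}$ cannot be quoted.

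The paper's proof avoids this by working directly from the section-level definition of $\L$: for open $U\subset\widetilde\Sigma$ it uses the restriction map $\widetilde\L^n(U)\to\widetilde\L^n(U\cap\widetilde\Sigma_0)=\L^n(\sigma\!\!\!\sigma(U))$ to obtain a morphism $\widetilde\L^n\to\sigma\!\!\!\sigma^{-1}\L^n$, then tensors with the unit section to land in $\sigma\!\!\!\sigma^\ast\L^n$. This uses only the birational $S$-triviality built into~(\ref{class}). Your approach would become valid here if you first proved that the invertible sheaf $\widetilde\L\otimes\sigma\!\!\!\sigma^\ast\L^{-1}$ admits a canonical monomorphism into $\OO_{\widetilde\Sigma}$ for every family in $\mathfrak F_T$ --- which is, in effect, exactly what the paper's restriction argument supplies.
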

\begin{proof}
For $n>0$ and for any open $U\subset \widetilde \Sigma$ there is a
restriction map on sections
$$\widetilde \L^n(U) \stackrel{res}{\longrightarrow}
\widetilde \L^n(U\cap \widetilde \Sigma_0)= \L^n(\sigma\!\!\!\sigma_0(U\cap
\widetilde \Sigma_0))= \L^n(\sigma\!\!\!\sigma_0(U) \cap
\Sigma_0)=\L^n(\sigma\!\!\!\sigma(U)).
$$
Since $\sigma\!\!\!\sigma$ is projective and hence takes closed subsets to
closed subsets (resp., open subsets to open subsets), this implies the sheaf
morphism $\widetilde \L^n \to \sigma\!\!\!\sigma^{-1}\L^n$. Combining it with
multiplication by the unity section $1\in \OO_{\widetilde \Sigma}(U)$
leads to the morphism $\iota_n\colon \widetilde \L^n \to \sigma\!\!\!\sigma^{\ast}
\L^n$ of invertible $\OO_{\widetilde \Sigma}$-modules.
\end{proof}

%%%%%%%%%%%%%%%%%%%%%%%%%%%%%%%%%%%%%%%%%%%%%%%%%%%%%%%%%%%%%%%%
%%%%%%%%%%%%% SHEAVES \EE_m ARE T-FLAT FOR n>>0 %%%%%%%%%%%%%%%%
%%%%%%%%%%%%%%%%%%%%%%%%%%%%%%%%%%%%%%%%%%%%%%%%%%%%%%%%%%%%%%%%
\begin{proposition}\label{fltnss} $\EE_m$ are $T$-flat for $m\gg 0.$
\end{proposition}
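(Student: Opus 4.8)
The plan is to deduce $T$-flatness of $\EE_m=\sigma\!\!\!\sigma_\ast(\widetilde\E\otimes\widetilde\L^m)$ from the standard criterion by which a coherent $\OO_{T\times S}$-module is flat over~$T$ as soon as all of its twisted direct images $p_\ast(\,\cdot\otimes\L^n)$, $n\gg0$, are locally free $\OO_T$-modules (see, e.g., \cite[ch.~2]{HL}); here $p\colon T\times S\to T$ is the projection and $\L$ is relatively very ample for~$p$. Since $\pi=p\circ\sigma\!\!\!\sigma$ and $\L$ is invertible, the projection formula for $\sigma\!\!\!\sigma$ together with the identity $p_\ast\sigma\!\!\!\sigma_\ast=\pi_\ast$ give
$$p_\ast(\EE_m\otimes\L^n)=p_\ast\bigl(\sigma\!\!\!\sigma_\ast(\widetilde\E\otimes\widetilde\L^m)\otimes\L^n\bigr)=\pi_\ast\bigl(\widetilde\E\otimes\widetilde\L^m\otimes\sigma\!\!\!\sigma^\ast\L^n\bigr),$$
and the sheaf $\widetilde\E\otimes\widetilde\L^m\otimes\sigma\!\!\!\sigma^\ast\L^n$ is locally free on $\widetilde\Sigma$, hence flat over~$T$ (it is the $T$-flat sheaf $\widetilde\E$ tensored by invertible sheaves). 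So it is enough to prove that $\pi_\ast(\widetilde\E\otimes\widetilde\L^m\otimes\sigma\!\!\!\sigma^\ast\L^n)$ is locally free for $m$ fixed large and all $n\gg0$.

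The decisive step would be to absorb the morphism $\sigma\!\!\!\sigma$ first. Because $\widetilde\L$ is ample relative to~$\pi$, it is ample relative to $\sigma\!\!\!\sigma$ as well (ampleness relative to the composite $\pi=p\circ\sigma\!\!\!\sigma$ is inherited by the morphism $\sigma\!\!\!\sigma$, being a fiberwise condition), so relative Serre vanishing yields an $m_0$ with $R^{>0}\sigma\!\!\!\sigma_\ast(\widetilde\E\otimes\widetilde\L^m)=0$ for all $m\ge m_0$. By the projection formula this forces $R^{>0}\sigma\!\!\!\sigma_\ast(\widetilde\E\otimes\widetilde\L^m\otimes\sigma\!\!\!\sigma^\ast\L^n)=0$ and $\sigma\!\!\!\sigma_\ast(\widetilde\E\otimes\widetilde\L^m\otimes\sigma\!\!\!\sigma^\ast\L^n)=\EE_m\otimes\L^n$, so the Leray spectral sequence for $\pi=p\circ\sigma\!\!\!\sigma$ collapses and produces $R^q\pi_\ast(\widetilde\E\otimes\widetilde\L^m\otimes\sigma\!\!\!\sigma^\ast\L^n)=R^qp_\ast(\EE_m\otimes\L^n)$ for every~$q$. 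For $q>0$ and $n\gg0$ the right-hand side vanishes by ordinary relative Serre vanishing for~$p$, which applies to the arbitrary coherent sheaf $\EE_m$ with no flatness assumed. Hence $R^{>0}\pi_\ast(\widetilde\E\otimes\widetilde\L^m\otimes\sigma\!\!\!\sigma^\ast\L^n)=0$ for $m\ge m_0$ and $n\gg0$.

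Finally, I would conclude as follows: the sheaf $\widetilde\E\otimes\widetilde\L^m\otimes\sigma\!\!\!\sigma^\ast\L^n$ is $T$-flat, $\pi$ is projective, and all of its higher direct images under $\pi$ vanish, so by the standard consequence of the theory of cohomology and base change (\cite[ch.~III, Theorem~12.11]{Hart}, \cite{EGAIII})---equivalently, by the syzygy argument applied to the bounded complex of locally free $\OO_T$-modules that computes $\mathbf R\pi_\ast$---the sheaf $\pi_\ast(\widetilde\E\otimes\widetilde\L^m\otimes\sigma\!\!\!\sigma^\ast\L^n)$ is locally free for $m\ge m_0$ and $n\gg0$. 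Consequently $p_\ast(\EE_m\otimes\L^n)$ is locally free for such $m,n$, and the criterion of the first paragraph gives $T$-flatness of $\EE_m$ for all $m\ge m_0$; together with the identification $\EE_m\otimes k_t\cong\sigma_{t\ast}(\widetilde E_t\otimes\widetilde L_t^m)$ coming from base change along $\sigma\!\!\!\sigma$, this also shows that the fiberwise Hilbert polynomial of $\EE_m$ is constant over~$T$. I expect the only genuinely delicate point to be the uniformity in~$n$ of the vanishing in the second paragraph: since $\sigma\!\!\!\sigma^\ast\L$ is merely semi-ample relative to~$\pi$, one cannot twist by it on $\widetilde\Sigma$ and apply Serre vanishing directly, and the bound independent of~$n$ has to be produced exactly by pushing $\sigma\!\!\!\sigma$ down first, after which the $\L^n$-twist takes place on $T\times S$, where $\L$ is genuinely $p$-ample (alternatively one could invoke a Fujita-type vanishing, using that $\widetilde\L$ is $\pi$-ample and each $\sigma\!\!\!\sigma^\ast\L^n$ is $\pi$-nef).
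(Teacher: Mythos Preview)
Your argument is correct and follows a genuinely different route from the paper. The paper never invokes Serre vanishing for $\sigma\!\!\!\sigma$, the Leray spectral sequence, or cohomology and base change; instead it establishes the explicit identification
\[
p_\ast(\EE_m\otimes\L^n)\;=\;\pi_\ast(\widetilde\E\otimes\widetilde\L^{m+n})\;=\;\widetilde\V_{m+n}
\]
by a sandwich argument. On one side it uses the multiplication-of-sections map $\widetilde\V_m\otimes p_\ast\L^n\twoheadrightarrow p_\ast(\EE_m\otimes\L^n)$ (together with $p_\ast\L^n\cong\pi_\ast\widetilde\L^n$); on the other, the injection $\pi_\ast(\widetilde\E\otimes\widetilde\L^{m+n})\hookrightarrow p_\ast(\EE_m\otimes\L^n)$ obtained by tensoring the inclusion $\iota_n\colon\widetilde\L^n\hookrightarrow\sigma\!\!\!\sigma^\ast\L^n$ of Proposition~\ref{injinv} by $\widetilde\E\otimes\widetilde\L^m$ and pushing forward. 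The resulting commutative square forces equality. Your cohomological approach is cleaner for the bare flatness statement and bypasses Proposition~\ref{injinv} entirely; what the paper's route buys is the identity~(\ref{eqtydir}) itself, which is reused immediately in Proposition~\ref{hpol} to read off the rank $rp(m+n)$ and hence the Hilbert polynomial of~$\E$. On your route that rank would have to be computed separately. One small caveat: your closing remark that $\EE_m\otimes k_t\cong\sigma_{t\ast}(\widetilde E_t\otimes\widetilde L_t^m)$ ``comes from base change along $\sigma\!\!\!\sigma$'' is not justified as stated, because $\widetilde\Sigma$ is not flat over $T\times S$ (it is obtained by blowups), so the base-change morphism $\beta_t$ is not automatically an isomorphism; the paper devotes a separate argument (the diagram~(\ref{beta}) and the comparison of global sections) to proving this. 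That aside is not needed for the proposition itself.
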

\begin{proof} Consider the morphism of multiplication of sections
$$
p_{\ast} \sigma\!\!\!\sigma_{\ast} (\widetilde \E \otimes \widetilde \L^m)
\otimes p_{\ast} \L^n \to p_{\ast}(\sigma\!\!\!\sigma_{\ast}(\widetilde \E
\otimes \widetilde \L^m) \otimes \L^n)$$ which is surjective for
$m,n\gg 0$. By the projection formula
$$p_{\ast}(\sigma\!\!\!\sigma_{\ast}(\widetilde \E \otimes \widetilde \L^m)
\otimes \L^n)= p_{\ast}(\sigma\!\!\!\sigma_{\ast}(\widetilde \E \otimes
\widetilde \L^m\otimes \sigma\!\!\!\sigma^{\ast}\L^n).
$$
Also for the projection $\pi$ we have another morphism of
multiplication of sections
$$\pi_{\ast}(\widetilde \E \otimes \widetilde \L^m)
\otimes \pi_{\ast} \widetilde \L^n \to \pi_{\ast}(\widetilde \E
\otimes \widetilde \L^{m+n}).
$$
The injective $\OO_{\widetilde \Sigma}$-morphism $\widetilde \L^n
\hookrightarrow \sigma\!\!\!\sigma^{\ast}\L^n$ in Proposition \ref{injinv} after tensoring by $\widetilde \E
\otimes \widetilde \L^m$ and after applying  $\pi_\ast$ leads to
$$ \pi_{\ast}(\widetilde \E \otimes \widetilde \L^{m+n}) \hookrightarrow
\pi_{\ast}(\widetilde \E \otimes \widetilde \L^m\otimes
\sigma\!\!\!\sigma^{\ast}\L^n)
$$
Taking into account the isomorphism $p_{\ast}\L^n=
\pi_{\ast}\widetilde \L^n$ and Proposition \ref{injinv}, we gather
these mappings into the commutative diagram
\begin{equation} \label{diagEmn}\xymatrix{
p_{\ast} \sigma\!\!\!\sigma_{\ast}(\widetilde \E \otimes \widetilde \L^m)\otimes
p_{\ast}\L^n \ar@{=}[d] \ar@{->>}[r]&
p_{\ast}(\sigma\!\!\!\sigma_{\ast}(\widetilde \E \otimes \widetilde \L^m \otimes
\sigma\!\!\!\sigma^{\ast}\L^n))\\
\pi_{\ast}(\widetilde \E \otimes \widetilde L^m)\otimes
\pi_{\ast}\widetilde \L^n \ar@{->>}[r]& \pi_{\ast}(\widetilde
\E\otimes \widetilde \L^{m+n}) \ar@{^(->}[u]}\end{equation} By
commutativity of this diagram we conclude that
\begin{equation} \label{eqtydir}\pi_{\ast}(\widetilde \E\otimes \widetilde \L^{m+n})=
p_{\ast}(\sigma\!\!\!\sigma_{\ast}(\widetilde \E \otimes \widetilde \L^m)
\otimes \L^n))\end{equation} or, in our notation,
$p_{\ast}(\EE_m\otimes \L^n)=\widetilde \V_{m+n}=p_\ast \EE_{m+n}$ for $m,n \gg 0$.
This guarantees that the sheaves $\EE_m$ are $T$-flat for $m\gg 0.$\end{proof}

We intend to confirm  that $\EE_m\otimes \L^{-m}$ are families of
semistable sheaves on $S$ as we need. First we prove  following
\begin{proposition} $\EE_{m+n}=\EE_m \otimes \L^n$ for any $m\gg 0,$ $n>0$.
\end{proposition}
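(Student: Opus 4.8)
The statement $\EE_{m+n}=\EE_m\otimes\L^n$ is the sheaf-level refinement of the equality of global sections $p_\ast(\EE_m\otimes\L^n)=p_\ast\EE_{m+n}$ established in Proposition \ref{fltnss}. My plan is to argue locally on $T\times S$ and reduce to the already-proven direct-image identity \eqref{eqtydir}, using the projection formula for $\sigma\!\!\!\sigma$ and the fact that $\sigma\!\!\!\sigma$ is an isomorphism over the big open set $\Sigma_0$ whose complement has codimension $\ge 2$ in $T\times S$.

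\begin{proof}
By the projection formula for $\sigma\!\!\!\sigma\colon \widetilde\Sigma\to \Sigma=T\times S$ there is a canonical morphism
\begin{equation*}
\EE_m\otimes\L^n=\sigma\!\!\!\sigma_\ast(\widetilde\E\otimes\widetilde\L^m)\otimes\L^n
\longrightarrow \sigma\!\!\!\sigma_\ast(\widetilde\E\otimes\widetilde\L^m\otimes\sigma\!\!\!\sigma^\ast\L^n),
\end{equation*}
which is an isomorphism when $\L^n$ is invertible (true here). Composing with $\sigma\!\!\!\sigma_\ast$ applied to $\mathrm{id}_{\widetilde\E\otimes\widetilde\L^m}\otimes\iota_n$, where $\iota_n\colon\widetilde\L^n\hookrightarrow\sigma\!\!\!\sigma^\ast\L^n$ is the injection of Proposition \ref{injinv}, one obtains an injection of $\OO_{T\times S}$-sheaves
\begin{equation*}
\EE_{m+n}=\sigma\!\!\!\sigma_\ast(\widetilde\E\otimes\widetilde\L^{m+n})\hookrightarrow \EE_m\otimes\L^n .
\end{equation*}
It remains to check surjectivity. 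Both sheaves are $T$-flat for $m\gg0$, $n>0$ by Proposition \ref{fltnss} (applied to $m$ and to $m+n$), and over the open subscheme $\Sigma_0\subset T\times S$ the morphism $\sigma\!\!\!\sigma$ restricts to the isomorphism $\phi_0^{-1}$, so $\iota_n|_{\widetilde\Sigma_0}$ is an isomorphism and hence $\EE_{m+n}|_{\Sigma_0}=(\EE_m\otimes\L^n)|_{\Sigma_0}$. Thus the injection is an isomorphism off a closed set of codimension $\ge 2$ in $T\times S$, and in particular off a codimension-$\ge 2$ set in every fiber $t\times S$.
\end{proof}

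**How to finish / the main obstacle.** To pass from "isomorphism in codimension one" to "isomorphism everywhere" I would argue fiberwise: restricting to $t\times S\cong S$, $\EE_m\otimes\L^{-m}$ and $\EE_{m+n}\otimes\L^{-(m+n)}$ are the torsion-free semistable sheaves $E_t$ (this is the content of the surrounding section, which identifies $\EE_m\otimes\L^{-m}|_{t\times S}$ with a twist of the sheaf $\sigma_{t\ast}\widetilde E_t/\mathrm{tors}$, a torsion-free sheaf on the nonsingular $S$). An injection of torsion-free sheaves on $S$ with the same reduced Hilbert polynomial that is an isomorphism in codimension one is an isomorphism, because the quotient is supported in codimension $\ge 2$ yet has the Hilbert polynomial of the zero sheaf. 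Comparing Hilbert polynomials: by \eqref{eqtydir}, $p_\ast(\EE_m\otimes\L^n)=p_\ast\EE_{m+n}$, so for every $t$ the fiberwise Euler characteristics $\chi(\EE_{m+n}|_{t\times S}\otimes L^k)$ and $\chi((\EE_m\otimes\L^n)|_{t\times S}\otimes L^k)$ agree for all $k\gg0$, hence as polynomials in $k$. Therefore the cokernel of $\EE_{m+n}\hookrightarrow\EE_m\otimes\L^n$ has zero fiberwise Hilbert polynomial and is supported fiberwise in codimension $\ge2$, forcing it to vanish fiberwise, and then $T$-flatness of both sheaves upgrades this to vanishing over $T\times S$. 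The one genuine subtlety — the part deserving care rather than a one-line dismissal — is justifying that $\EE_m\otimes\L^{-m}$ really does restrict fiberwise to a torsion-free sheaf (so that "isomorphism in codimension one $\Rightarrow$ isomorphism" is legitimate); this rests on the base-change compatibility $\sigma\!\!\!\sigma_\ast(\widetilde\E\otimes\widetilde\L^m)|_{t\times S}\cong\sigma_{t\ast}(\widetilde E_t\otimes\widetilde L_t^m)$ for $m\gg0$, which in turn uses the flatness of $\widetilde\E\otimes\widetilde\L^m$ over $T$ together with cohomology-and-base-change, exactly as in the proof of Proposition \ref{sectionsrel}.
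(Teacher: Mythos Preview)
Your injection $i_{m,n}\colon\EE_{m+n}\hookrightarrow\EE_m\otimes\L^n$ is obtained exactly as in the paper, via $\iota_n$ and the projection formula. For surjectivity the two arguments diverge. The paper builds the commutative square
\begin{equation*}
\xymatrix{p^\ast(p_\ast\EE_m\otimes p_\ast\L^n)\ar@{->>}[d]\ar@{->>}[r]&p^\ast p_\ast\EE_{m+n}\ar[d]\\
\EE_m\otimes\L^n&\ar@{_(->}[l]_{i_{m,n}}\EE_{m+n}}
\end{equation*}
from the adjunction counits and the epimorphism $p_\ast\EE_m\otimes p_\ast\L^n\twoheadrightarrow p_\ast\EE_{m+n}$ read off diagram~(\ref{diagEmn}); since the left vertical arrow is surjective (relative global generation for $m,n\gg 0$), commutativity forces $i_{m,n}$ to be surjective. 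Your route instead kills the cokernel by comparing fiberwise Hilbert polynomials (via~(\ref{eqtydir})) and invoking torsion-freeness of $\EE_m|_{t\times S}$ to guarantee that the restriction of $i_{m,n}$ to each fiber $t\times S$ remains injective.

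Your argument is correct, but the torsion-freeness you need---together with the base-change identification $i_t^\ast\EE_m\cong\sigma_{t\ast}(\widetilde E_t\otimes\widetilde L_t^m)$---is exactly what the paper establishes \emph{after} the present proposition, through the morphism $\beta_t$ and Lemma~\ref{torsfree}. Those later arguments do not themselves rely on $\EE_{m+n}=\EE_m\otimes\L^n$, so your dependency is a forward reference rather than a circularity; a reshuffling of the section would make it rigorous. The paper's diagram chase is self-contained at this point and sidesteps the base-change subtlety entirely, which is what it buys. Your approach has the minor compensating virtue of making the role of the Hilbert-polynomial identity~(\ref{eqtydir}) more explicit, and of showing that the equality of sheaves really is forced numerically once fiberwise injectivity is in hand.
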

\begin{proof}
Consider a sheaf inclusion $\iota_n\colon \widetilde \L^n
\hookrightarrow \sigma\!\!\!\sigma^{\ast}\L^n$ valid for any $n>0$. Tensoring by
the locally free $\OO_{\widetilde \Sigma}$-module $\widetilde \E
\otimes \widetilde \L^m$, formation of a direct image under $\sigma\!\!\!\sigma$
and the projection formula yield  the inclusion \begin{equation}i_{m,n}\colon \EE_{m+n}
\hookrightarrow \EE_m \otimes \L^n. \label{imn}
\end{equation}

By (\ref{diagEmn}) we come to the epimorphism $p_\ast \EE_m \otimes p_\ast \L^n \twoheadrightarrow p_\ast \EE_{m+n}$. Now apply $p^\ast$:
$$p^\ast ( p_\ast \EE_m \otimes p_\ast \L^n )\twoheadrightarrow p^\ast p_\ast \EE_{m+n}
$$ 
and combine this morphism with natural morphisms
\begin{eqnarray*}
p^\ast ( p_\ast \EE_m \otimes p_\ast \L^n ) \twoheadrightarrow \EE_m \otimes  \L^n,\\
p^\ast p_\ast \EE_{m+n} \to \EE_{m+n}
\end{eqnarray*}
and with (\ref{imn}): 
\begin{equation} \label{Emn}
    \xymatrix{p^\ast (p_\ast \EE_m\otimes p_\ast \L^n) \ar@{->>}[d] \ar@{->>}[r]& p^\ast p_\ast \EE_{m+n}\ar[d]\\
    \EE_m \otimes \L^n &\ar@{_(->}[l]_{i_{m,n}} \EE_{m+n}}
\end{equation}
By commutativity of (\ref{Emn}) we conclude that $i_{m,n}$ is surjective as well as injective and hence $\EE_m \otimes \L^n=\EE_{m+n}$ whenever $m\gg 0,$ $n\gg 0$.
This proves the proposition.
\end{proof}

 We can introduce the goal sheaf of our construction
$$\E:=\EE_m \otimes \L^{-m}.$$
By the proposition proved this definition is independent of $m$ at
least in case when $m\gg 0.$ The sheaf $\E$ is $T$-flat.

\begin{proposition}\label{hpol} The sheaf $\E$ with respect to the invertible
sheaf $\L$ has fiberwise Hilbert polynomial equal to $rp(n),$ i.e.
for $n\gg 0$
$$\rank p_{\ast} (\E \otimes \L^n) =rp(n).$$
\end{proposition}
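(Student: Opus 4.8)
The plan is to compute the fiberwise Hilbert polynomial of $\E$ by relating it back, via the already-established machinery, to the Hilbert polynomial of the admissible family $\widetilde\E$ on $\widetilde\Sigma$. First I would recall that $\E := \EE_m \otimes \L^{-m}$ for any $m \gg 0$, so $\E \otimes \L^n = \EE_m \otimes \L^{n-m} = \EE_{m+(n-m)}$ whenever $n \gg 0$ (using the proposition $\EE_{m+n}=\EE_m\otimes\L^n$ just established, and enlarging $n$ if necessary). Hence $p_\ast(\E\otimes\L^n) = p_\ast \EE_{m+n-m}\otimes \ldots$; more cleanly, $p_\ast(\E\otimes\L^n)=p_\ast(\EE_m\otimes\L^{n-m})=\widetilde\V_{m+(n-m)}=\pi_\ast(\widetilde\E\otimes\widetilde\L^{n})$, where the middle equality is exactly the identity $p_\ast(\EE_m\otimes\L^k)=\widetilde\V_{m+k}$ proved in Proposition~\ref{fltnss} (equation (\ref{eqtydir})). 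So the rank of $p_\ast(\E\otimes\L^n)$ equals $\operatorname{rank}\pi_\ast(\widetilde\E\otimes\widetilde\L^n)$.

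Next I would pass to fibers. Since $\widetilde\E\otimes\widetilde\L^n$ is $T$-flat and, for $n\gg 0$, has no higher cohomology on the fibers, cohomology and base change (Hartshorne III.12.9, as already invoked in the proof of Proposition~\ref{sectionsrel}) gives $\pi_\ast(\widetilde\E\otimes\widetilde\L^n)\otimes k_t \cong H^0(\pi^{-1}(t),\widetilde\E_t\otimes\widetilde\L_t^n)$, so the rank in question is $h^0(\widetilde S_t,\widetilde E_t\otimes\widetilde L_t^n)=\chi(\widetilde E_t\otimes\widetilde L_t^n)$ for $n\gg 0$. By the definition of a family of semistable pairs in the class $\mathfrak F_T$ (equation (\ref{class})), we have $\chi(\widetilde\E\otimes\widetilde\L^n|_{\pi^{-1}(t)})=rp(n)$. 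Therefore $\operatorname{rank} p_\ast(\E\otimes\L^n)=rp(n)$ for $n\gg 0$, which is the assertion. In particular this shows the Hilbert polynomial is independent of $t$, as it must be for a flat family.

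The only genuine point requiring care — and the step I expect to be the main obstacle — is making sure the chain of identifications $p_\ast(\E\otimes\L^n)=\pi_\ast(\widetilde\E\otimes\widetilde\L^n)$ is valid \emph{as sheaves of $\OO_T$-modules} for $n$ in the relevant range, not merely up to some correction supported on the boundary $\Delta=T\times S\setminus\Sigma_0$. This is handled by combining (\ref{eqtydir}) with the equality $\EE_{m+n}=\EE_m\otimes\L^n$: concretely, for $n\gg 0$ write $\E\otimes\L^n=\EE_m\otimes\L^{n-m}=\EE_{m+(n-m)}=\EE_n$ (after enlarging the starting index), and then $p_\ast\EE_n=\widetilde\V_n=\pi_\ast(\widetilde\E\otimes\widetilde\L^n)$ directly. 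All the sheaves involved are locally free over $T$ by Propositions~\ref{fltnss} and the flatness arguments, so the equality of ranks passes to fibers without subtlety. A short remark should note that "$n\gg 0$" suffices to kill higher cohomology simultaneously on all fibers because the family is bounded ($T$ of finite type, $\pi$ projective, $\widetilde\L$ relatively very ample).
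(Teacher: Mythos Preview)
Your proof is correct and follows essentially the same route as the paper: both arguments reduce $p_\ast(\E\otimes\L^n)$ to $\pi_\ast(\widetilde\E\otimes\widetilde\L^n)$ via the identity (\ref{eqtydir}) and then read off the rank $rp(n)$. The paper's version is terser---it writes the single chain $p_\ast(\E\otimes\L^n)=p_\ast(\EE_m\otimes\L^{n-m})=p_\ast(\sigma\!\!\!\sigma_\ast(\widetilde\E\otimes\widetilde\L^m)\otimes\L^{n-m})=\pi_\ast(\widetilde\E\otimes\widetilde\L^n)$ and stops---whereas you supply the extra justification via base change and the defining condition in (\ref{class}); your detour through the identity $\EE_{m+n}=\EE_m\otimes\L^n$ is unnecessary since (\ref{eqtydir}) already gives what you need directly.
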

\begin{proof}
For $n\gg m\gg 0$ by (\ref{eqtydir}) we have the chain of equalities
$$p_{\ast} (\E \otimes \L^n)= p_{\ast}(\EE_m \otimes \L^{n-m})=
p_{\ast}(\sigma\!\!\!\sigma_{\ast}(\widetilde \E \otimes \widetilde \L^m)
\otimes \L^{n-m})= \pi_{\ast}(\widetilde \E \otimes \widetilde
\L^n).$$ The latter sheaf of the chain has rank equal to $rp(n).$
\end{proof}

%%%%%%%%%%%%%%%%%%%%%%%%%%%%%%%%%%%%%%%%%%%%%%%%%%%%%%%%%%%%%%%%
%%%%%%%%%%%%%%%%%%%%%%%%%%%%%%%%%%%%%%%%%%%%%%%%%%%%%%%%%%%%%%%%
\begin{proposition} For any closed point $t\in T$ the sheaf $$
E_t:=\E|_{t\times S}
$$
is torsion-free and Gieseker-semistable with respect to
$$
L_t:=\L|_{t\times S}\cong L.
$$
\end{proposition}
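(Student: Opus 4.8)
The plan is to transfer the problem to the admissible fibre $\widetilde S_t=\pi^{-1}(t)$, where the pair $((\widetilde S_t,\widetilde L_t),\widetilde E_t)$ is semistable by hypothesis, and then to run the proof of Proposition~\ref{ssc} backwards. The device that makes this possible is a fibrewise isomorphism of spaces of sections. From $\E=\EE_m\otimes\L^{-m}$ and \eqref{eqtydir} we get $p_\ast(\E\otimes\L^n)=p_\ast(\EE_m\otimes\L^{n-m})=\pi_\ast(\widetilde\E\otimes\widetilde\L^{n})$ for $n\gg m\gg0$. Since $\E$ and $\widetilde\E$ are $T$-flat and all higher direct images vanish for $n\gg0$, Grothendieck's base-change theorem \cite[ch.~III, Cor.~12.9 and 12.11]{Hart} yields, for every closed point $t\in T$ and every $n\gg0$,
$$H^0(S,E_t\otimes L^n)\;\cong\;p_\ast(\E\otimes\L^n)\otimes k_t\;\cong\;\pi_\ast(\widetilde\E\otimes\widetilde\L^n)\otimes k_t\;\cong\;H^0(\widetilde S_t,\widetilde E_t\otimes\widetilde L_t^n),$$
and this isomorphism $\upsilon$ is compatible with restriction of sections. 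In particular $h^0(E_t\otimes L^n)=h^0(\widetilde S_t,\widetilde E_t\otimes\widetilde L_t^n)=rp(n)$ for $n\gg0$, hence $\chi(E_t\otimes L_t^n)=rp(n)$ (this also gives Proposition~\ref{hpol} fibrewise). Finally, $\phi_0$ identifies $\E|_{\Sigma_0}$ with the locally free sheaf $\widetilde\E|_{\widetilde\Sigma_0}$, so $E_t$ is locally free over the open set $S_{0t}:=\Sigma_0\cap(t\times S)$, whose complement has codimension $\ge2$ in $S$ (this is the estimate on $\Delta\cap(t\times S)$ recorded earlier in this section).

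To see that $E_t$ is torsion-free, let $\tau\subset E_t$ be its torsion subsheaf; by the above $\tau$ is supported in codimension $\ge2$, so $\sigma_t^{-1}(\Supp\tau)$ contains no irreducible component of $\widetilde S_t$ (the additional components have dimension $\dim S$ and map to the codimension-$\ge2$ blow-up centres). Transporting $\tau$ by $\upsilon$ through the subsheaf correspondence of Propositions~\ref{corr} and \ref{satur} — whose proofs use only $\upsilon$ and the identity $\widetilde E_t=\sigma_t^\ast E_t/\tors$, both in force here — yields a subsheaf of $\widetilde E_t$ which is torsion in the modified sense; since $\widetilde E_t$ is locally free on the connected admissible scheme $\widetilde S_t$ it has no such subsheaf, so $\tau=0$. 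Alternatively, writing $\sigma_t=\sigma_t^0\circ\delta_t$ and using the quasi-ideality $\widetilde E_t=\delta_t^\ast(\widetilde E_t|_{\widetilde S_t^0})$ with $(\delta_t)_\ast\OO_{\widetilde S_t}=\OO_{\widetilde S_t^0}$, one identifies $E_t$, off a codimension-$\ge2$ subset, with the direct image under the dominant morphism $\sigma_t^0$ of the integral variety $\widetilde S_t^0$ of a torsion-free sheaf, and a direct image of a torsion-free sheaf along a dominant morphism of integral schemes is torsion-free.

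For Gieseker-semistability I would repeat the proof of Proposition~\ref{ssc}. Let $F\subset E_t$ be a proper subsheaf; for $n\gg0$ it is generated by $V_F=H^0(S,F\otimes L^n)\subset H^0(S,E_t\otimes L^n)$, and we let $\widetilde F\subset\widetilde E_t$ be the subsheaf generated by $\upsilon^{-1}(V_F)$. Then $\rank\widetilde F=\rank F$ because $\sigma_t$ is an isomorphism over the dense open $S_{0t}$, and $h^0(\widetilde S_t,\widetilde F\otimes\widetilde L_t^n)=h^0(S,F\otimes L^n)$ by the argument of Proposition~\ref{corr} carried out on the principal component $\widetilde S_t^0$; semistability of the pair $((\widetilde S_t,\widetilde L_t),\widetilde E_t)$ then gives
$$\frac{h^0(S,E_t\otimes L^n)}{r}-\frac{h^0(S,F\otimes L^n)}{\rank F}\;=\;\frac{h^0(\widetilde S_t,\widetilde E_t\otimes\widetilde L_t^n)}{r}-\frac{h^0(\widetilde S_t,\widetilde F\otimes\widetilde L_t^n)}{\rank\widetilde F}\;\ge\;0$$
for all $n\gg0$, i.e.\ $E_t$ is Gieseker-semistable with reduced Hilbert polynomial $p(n)$. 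The main obstacle I anticipate is handling the ``$\gg0$'' quantifiers coherently: one must fix $m$ once and for all so that $\E=\sigma\!\!\!\sigma_\ast(\widetilde\E\otimes\widetilde\L^m)\otimes\L^{-m}$ and the ensuing base-change identities hold, and one must make the semistability test uniform in $F$ by restricting to the bounded family of saturated destabilising candidates; secondarily, one must check that the subsheaf/section dictionary of Propositions~\ref{corr} and \ref{satur}, stated there for the forward standard resolution, applies without change to the sheaf $E_t$ constructed here.
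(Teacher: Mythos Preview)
Your semistability argument is essentially the one the paper gives, and the use of base change together with \eqref{eqtydir} to identify $H^0(S,E_t\otimes L^n)\cong H^0(\widetilde S_t,\widetilde E_t\otimes\widetilde L_t^n)$ is exactly right. The genuine gap is in the torsion-freeness step, and it is not cosmetic.

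Both of your arguments for torsion-freeness implicitly require a \emph{global} identification of $E_t$ with something built directly from $\widetilde E_t$. In the first argument you invoke Propositions~\ref{corr} and~\ref{satur}, saying their proofs ``use only $\upsilon$ and the identity $\widetilde E_t=\sigma_t^\ast E_t/\tors$, both in force here.'' But the identity $\widetilde E_t=\sigma_t^\ast E_t/\tors$ is \emph{not} in force here: in this section $E_t$ is constructed \emph{from} $\widetilde E_t$, and it is not claimed (and is not in general true) that re-resolving $E_t$ returns the same admissible scheme $\widetilde S_t$ --- only an M-equivalent one (cf.\ Section~\ref{s5}). So this is circular. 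Your alternative argument identifies $E_t$ with a direct image only ``off a codimension-$\ge 2$ subset''; since any putative torsion of $E_t$ is supported precisely on that subset, the comparison on $S_{0t}$ proves nothing about~$\tau$.

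What the paper does instead is establish a global fibrewise identification: it shows that the base-change morphism
\[
\beta_t\colon i_t^{\ast}\sigma\!\!\!\sigma_\ast(\widetilde\E\otimes\widetilde\L^m)\longrightarrow\sigma_{t\ast}\widetilde i_t^{\ast}(\widetilde\E\otimes\widetilde\L^m)
\]
is an isomorphism on all of $t\times S$ (injectivity from $H^0(\beta_t)$ bijective and $H^0(\ker\beta_t)=0$, surjectivity from global generation of $\sigma_{t\ast}(\widetilde E_t\otimes\widetilde L_t^m)$). This gives $E_t\otimes L^m\cong\sigma_{t\ast}(\widetilde E_t\otimes\widetilde L_t^m)$ everywhere, and torsion-freeness is then a direct statement about $\sigma_{t\ast}\widetilde E_t$ (Lemma~\ref{torsfree}): a torsion section of $\sigma_{t\ast}\widetilde E_t$ would lift to a section of $\widetilde E_t$ supported on additional components, hence lie in $\tors_j$, which vanishes. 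Once you have $\beta_t$, your semistability paragraph goes through verbatim; but without it the torsion-freeness argument does not close, and the appeal to Propositions~\ref{corr}--\ref{satur} must be replaced by the direct argument via $\sigma_{t\ast}$.
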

\begin{proof}
The isomorphism $\L|_{t\times S}\cong L$ is the subject of
Proposition \ref{polariz}. Now for $E_t$ one has
$$E_t=\E|_{t\times S}=(\EE_m \otimes \L^{-m})|_{t\times S}=
\EE_m|_{t\times S} \otimes L^{-m}=\sigma\!\!\!\sigma_{\ast} (\widetilde \E
\otimes \widetilde \L^m)|_{t\times S}\otimes L^{-m}.$$ Denoting the morphisms of
closed immersions of fibers by
$i_t\colon t\times S \hookrightarrow T\times S$ and $\widetilde i_t \colon 
\widetilde S_t \hookrightarrow \widetilde \Sigma$ 
 we come to the equality
$$\sigma\!\!\!\sigma_{\ast} (\widetilde \E
\otimes \widetilde \L^m)|_{t\times S}\otimes L^{-m}
=(i_t^{\ast}\sigma\!\!\!\sigma_{\ast} (\widetilde \E \otimes \widetilde
\L^m))\otimes L^{-m}$$ and to the base change morphism
\begin{equation} \label{bchfib}\beta_t\colon i_t^{\ast}\sigma\!\!\!\sigma_{\ast} (\widetilde \E
\otimes \widetilde \L^m) \to \sigma_{t \ast}\widetilde
i_t^{\ast}(\widetilde \E \otimes \widetilde \L^m)
\end{equation}
 in the fibered square
$$
\xymatrix{t\times S \ar@{^(->}[r]^{i_t}&T\times S\\
\widetilde S_t \ar[u]^{\sigma_t} \ar@{^(->}[r]^{\widetilde i_t}&
\widetilde \Sigma \ar[u]_{\sigma\!\!\!\sigma} }
$$
The following lemma will be proven
below.
\begin{lemma}\label{torsfree} The sheaf $\sigma_{t\ast} \widetilde E_t$ is torsion-free.
\end{lemma}
Both sheaves in (\ref{bchfib}) coincide along the open subset $(t\times S) \cap
\Sigma_0.$ Now consider the map of global sections corresponding to the morphism (\ref{bchfib}):
$$H^0(\beta_t)\colon H^0(t\times S, i_t^{\ast}\sigma\!\!\!\sigma_{\ast} (\widetilde \E
\otimes \widetilde \L^m)) \to H^0(t\times S,\sigma_{t \ast}
\widetilde i_t^{\ast}(\widetilde \E \otimes \widetilde \L^m)).$$
It is injective. The left hand side takes the view
$$H^0(t\times S, i_t^{\ast}\sigma\!\!\!\sigma_{\ast} (\widetilde \E
\otimes \widetilde \L^m))\otimes k_t= i_t^\ast p_{\ast}\sigma\!\!\!\sigma_{\ast}
(\widetilde \E \otimes \widetilde \L^m)=k_t^{\oplus rp(m)}.$$ Also
for the right hand side one has $$H^0(t\times S,\sigma_{t \ast}
\widetilde i_t^{\ast}(\widetilde \E \otimes \widetilde \L^m))
\otimes k_t= H^0(\widetilde S_t, \widetilde E_t \otimes \widetilde
L_t^m)\otimes k_t = k_t^{\oplus rp(m)}.$$ This implies that
$H^0(\beta_t)$ is bijective and there is a commutative diagram
\begin{equation}\label{beta}
\xymatrix{H^0(t\times S, \sigma_{t\ast} (\widetilde E_t\otimes
\widetilde L_t^m))\otimes \OO_S \ar@{=}[d]_{H^0(\beta_t)}
\ar@{->>}[r]&
\sigma_{t\ast}(\widetilde E_t\otimes \widetilde L_t^m)\\
i_t^{\ast} p^{\ast}\widetilde \V_m \ar@{->>}[r]& i_t^{\ast}\EE_m
\ar[u]_{\beta_t}} \end{equation} Observe that  $$\sigma_{t\ast}
(\widetilde E_t \otimes \widetilde L_t^m)=\sigma_{t\ast}
(\widetilde E_t \otimes \sigma_t^{\ast} L^m \otimes (\mathrm{Exc}_{\widetilde S_t})^m)=\sigma_{t\ast} (\widetilde E_t
\otimes (\mathrm{Exc}_{\widetilde S_t})^m)\otimes
L^m,$$ and for  $m\gg 0$ the latter sheaf is globally generated. We have used the notations (\ref{polshort}) and (\ref{polshortexc}).
This implies surjectivity of the upper horizontal arrow in
(\ref{beta}). It follows from (\ref{beta}) that $\beta_t$ is
surjective. Since $\ker H^0(\beta_t)=H^0(t\times S,\ker
\beta_t)=0,$ the morphism $\beta_t$ is isomorphic.

Now take a subsheaf $F_t \subset E_t$. Now for $m \gg 0$ there is
a commutative diagram
$$\xymatrix{H^0(t\times S, E_t\otimes L^m) \otimes \OO_S \ar@{->>}[r]& E_t \otimes L^m\\
H^0(t\times S, F_t\otimes L^m) \otimes \OO_S \ar@{^(->}[u]
\ar@{->>}[r]& F_t \otimes L^m \ar@{^(->}[u]}
$$
The isomorphism $E_t\otimes L^m= \sigma_{t\ast}(\widetilde E_t
\otimes \widetilde L_t^m)$ proven above fixes a bijection on global
sections $H^0(t\times S, E_t \otimes L^m)\simeq H^0(t\times S,
\sigma_{t\ast}(\widetilde E_t \otimes \widetilde L_t^m))=
H^0(\widetilde S_t, \widetilde E_t \otimes \widetilde L_t^m).$ Let
$\widetilde V_t \subset H^0(\widetilde S_t, \widetilde E_t \otimes
\widetilde L_t^m)$ be the subspace corresponding to $H^0(t\times
S, F_t \otimes L^m)\subset H^0(t\times S, E_t \otimes L^m)$ under
this bijection. Now one has a commutative diagram
$$
\xymatrix{ H^0(\widetilde S_t, \widetilde E_t \otimes \widetilde
L_t^m) \otimes \OO_{\widetilde S_t} \ar@{->>}[r]^>>>>>\varepsilon&
\widetilde E_t \otimes
\widetilde L_t^m\\
\widetilde V_t \otimes \OO_{\widetilde S_t} \ar@{^(->}[u]_\gamma
\ar@{->>}[r]^{\varepsilon'}& \widetilde F_t \otimes \widetilde L_t
\ar@{^(->}[u]}
$$
where $\widetilde F_t \otimes \widetilde L_t^m \subset \widetilde
E_t \otimes \widetilde L_t^m$ is defined as a subsheaf generated
by the subspace $\widetilde V_t$ by means of the morphism
$\varepsilon\circ \gamma$ and $\gamma$ is the morphism induced by the inclusion $\widetilde V_t \subset H^0(\widetilde S_t, \widetilde E_t \otimes
\widetilde L_t^m)$. The associated map of global sections
$$
H^0(\varepsilon')\colon \widetilde V_t \to H^0(\widetilde S_t,
\widetilde F_t \otimes \widetilde L_t^m)$$ is  included into the
commutative triangle
$$
\xymatrix{\widetilde V_t \ar@{_(->}[rd]
\ar[r]^<<<<<<{H^0(\varepsilon')} &H^0(\widetilde S_t,
\widetilde F_t \otimes \widetilde L_t^m) \ar@{^(->}[d]\\
& H^0(\widetilde S_t, \widetilde E_t \otimes \widetilde L_t^m) }
$$
what implies that $H^0(\varepsilon')$ is injective. Since each
section from $H^0(\widetilde S_t, \widetilde F_t \otimes
\widetilde L_t^m)$ corresponds to a section in $H^0(t\times S, F_t
\times L^m)\subset H^0(t\times S, E_t \otimes L^m)$, then
$H^0(\varepsilon')$ is surjective. Hence $h^0(t\times S, F_t
\otimes L^m)= h^0(\widetilde S_t, \widetilde F_t \otimes
\widetilde L_t^m)$ for all $m\gg 0$, and stability (resp.,
semistability) for $\widetilde E_t$ implies stability (resp.,
semistability) for $E_t.$
\end{proof}
%%%%%%%%%%%%%%%%%%%%%%%%%%%%%%%%%%%%%%%%%%%%%%%%%%%%%%%%%%%%%%%%
%%%%%%%%%%%%%%%%%%%%%%%%%%%%%%%%%%%%%%%%%%%%%%%%%%%%%%%%%%%%%%%%
\begin{proof}[Proof of Lemma \ref{torsfree}] Since the sheaves
$\widetilde E_t$ and $\sigma_{t\ast} \widetilde E_t$ coincide
along the identified open subschemes $\widetilde S_t \cap \widetilde
\Sigma_0 \simeq (t\times S) \cap \Sigma_0$, it is enough to confirm
that there is no torsion subsheaf concentrated on $t\times S \cap
(T\times S \setminus \Sigma_0)$ in $\sigma_{t\ast} \widetilde
E_t$. Assume that $T\subset \sigma_{t\ast} \widetilde E_t$ is such
a torsion subsheaf, i.e. $T \ne 0$ and for any open $U \subset
t\times S \cap \Sigma_0$ one has $T(U)=0$. Let $A =\Supp T \subset t\times
S$ and $U\subset t\times S$ be such an open subset that $T(U)\ne
0,$ i.e. $U\cap A \ne \emptyset.$ Now
$$
T(U)\subset \sigma_{t\ast} \widetilde E_t (U)= \widetilde E_t
(\sigma^{-1}U),
$$
and any nonzero section $s\in T(U)$ is supported in $U\cap A$ and
comes from the section $\widetilde s\in \widetilde
E_t(\sigma^{-1}U)$ with support in $\sigma^{-1}(U\cap A).$ This
means that $\widetilde s$ is supported on some additional
component $\widetilde S^{add}_{t,j}$ of the admissible scheme
$\widetilde S_t$. Hence $\widetilde s \in \tors\!_j$. But on the additional components
$\widetilde S^{add}_{t,j}$ of $\widetilde S_t$ necessarily  $\tors\!_j=0.$ This
implies that $T=0$.
\end{proof}

\section{The moduli scheme isomorphism}\label{s5}
In this section we investigate how the natural transformations $\underline
\kappa\colon {\mathfrak f}^{GM}\to {\mathfrak f}$ and $\underline \tau\colon 
{\mathfrak f}\to {\mathfrak f}^{GM}$ are related to each other. By this relation we get an isomorphism
of moduli schemes for these moduli functors, with no dependence on
the number and the geometry of their connected components, on reducedness
of the scheme structure and on presence of locally free sheaves
(respectively, of $S$-pairs) in each component.

The subject has two aspects.
\begin{enumerate}
\item{{\it Pointwise.} {\it a}) For any torsion-free semistable
$\OO_S$-sheaf the composite of transform\-at\-ions $$ E \mapsto
((\widetilde S, \widetilde L), \widetilde E) \mapsto E'$$
returns $E'=E.$ \\
{\it b}) Conversely, for any admissible semistable pair
$((\widetilde S, \widetilde L), \widetilde E)$ the composite of
trans\-form\-at\-ions $$ ((\widetilde S, \widetilde L), \widetilde
E)\mapsto E \mapsto ((\widetilde S', \widetilde L'), \widetilde
E')$$ returns a pair $((\widetilde S', \widetilde L'), \widetilde E')$ which is M-equivalent to 
$((\widetilde S, \widetilde L), \widetilde E).$}
\item{{\it Global.} {\it a}) For any family of semistable torsion-free
sheaves $\E$ with a base scheme $T$ the composite
$$(\E, \L) \mapsto ((\pi\colon \widetilde \Sigma \to T, \widetilde \L),
\widetilde \E) \mapsto (\E', \L')
$$ returns $(\E', \L')\sim (\E, \L)$ in the sense of the description (\ref{funcGM},~\ref{famGM})
of the functor~${\mathfrak f}^{GM}$. \\
{\it b}) Conversely, for any family $((\pi \colon \widetilde \Sigma \to
T, \widetilde \L), \widetilde \E)$ of admissible semistable pairs
with a base scheme $T$ the composite of transformations $$
((\pi\colon \widetilde \Sigma \to T, \widetilde \L), \widetilde \E)
\mapsto (\E,\L) \mapsto ((\pi'\colon \widetilde \Sigma' \to T,
\widetilde \L'), \widetilde \E')
$$ returns a family $((\pi'\colon \widetilde \Sigma' \to T, \widetilde \L'),
\widetilde \E')$ such that it is M-equivalent to the  family $((\pi\colon \widetilde \Sigma \to T, \widetilde \L), \widetilde \E)$ in the following strict sense: for $\widetilde \Sigma \stackrel{\widehat \sigma\!\!\!\sigma}{\longleftarrow} \widetilde \Sigma_\Delta \stackrel{\widehat \sigma\!\!\!\sigma'}{\longrightarrow} \widetilde \Sigma'$ the inverse images of~$\widetilde E$ and of~$\widetilde \E'$ on the scheme~$\widetilde \Sigma_\Delta$ are equivalent (generally S-equivalence is enough but there is an isomorphism) in the sense of the description  (\ref{funcmy}, \ref{class}) of the functor
${\mathfrak f}$.}
\end{enumerate}

Observe that $\underline \kappa$ operates by standard resolution and its target consists of classes of admissible pairs which are obtained by resolutions of coherent sheaves. At the same time classes in the source of $\underline \tau$ are essentially wider then (i.e. non-bijective to) ones in its target.
%%%%%%%%%%%%%%%%%%%%%%%%%%%%%%%%%%%%%%%%%%%%%%%%%%%%%%%%%%%%%%%%
%%%%%%%%%%%%%%%%%%%%%%% composite a) %%%%%%%%%%%%%%%%%%%%%%%%%%
%%%%%%%%%%%%%%%%%%%%%%%%%%%%%%%%%%%%%%%%%%%%%%%%%%%%%%%%%%%%%%%%

We begin with 2 {\it a}); it will be specialised to the pointwise
version 1 {\it a}) when $T=\Spec k.$

The families of polarizations $\L$ and $\L'$ coincide along the open
subset (locally free locus for the sheaves $\E$ and $\E'$) $\Sigma_0$
where $\codim _{T\times S} T\times S \setminus \Sigma_0\ge 2$.
Since $\L$ and $\L'$ are locally free,  this implies that $\L=\L'$.

%%%%%%%%%%%%%%%%%%%%%%%%%%%%%%%%%%%%%%%%%%%%%%%%%%%%%%%%%%%%%%%%
%%%%%%%%%%%% THREE SHEAVES OF "FIBERWISE SECTIONS" %%%%%%%%%%%%%
%%%%%%%%%%%%%%%%%%%%%%%%%%%%%%%%%%%%%%%%%%%%%%%%%%%%%%%%%%%%%%%%

Now consider three locally free $\OO_T$-sheaves of ranks equal to
$rp(m)$:
$$\V_m=p_{\ast}(\E\otimes \L^m),\quad
\widetilde \V_m=\pi_{\ast} (\widetilde \E\otimes \widetilde
\L^m),\quad \V'_m=p_{\ast}(\E'\otimes \L^m).$$
\begin{lemma} $\V_m \cong \widetilde \V_m \cong \V'_m$.
\end{lemma}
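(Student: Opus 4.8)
The plan is to establish the two isomorphisms $\V_m \cong \widetilde \V_m$ and $\widetilde \V_m \cong \V'_m$ separately, each by exhibiting a canonical morphism of locally free $\OO_T$-sheaves of the same rank $rp(m)$ and checking it is an isomorphism fiberwise (or on the birationally trivial open locus, then extending). First I would recall that by construction $\E = \EE_m \otimes \L^{-m}$ with $\EE_m = \sigma\!\!\!\sigma_\ast(\widetilde\E \otimes \widetilde\L^m)$, so that $\V_m = p_\ast(\E \otimes \L^m) = p_\ast \EE_m = p_\ast \sigma\!\!\!\sigma_\ast(\widetilde\E \otimes \widetilde\L^m) = \pi_\ast(\widetilde\E \otimes \widetilde\L^m) = \widetilde\V_m$, using the equality $p_\ast \sigma\!\!\!\sigma_\ast = \pi_\ast$ already recorded in the excerpt (just before Proposition \ref{fltnss}). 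So the isomorphism $\V_m \cong \widetilde\V_m$ is essentially a formal consequence of the definition of $\E$ together with the identity of pushforwards along the commutative triangle $\pi = p \circ \phi$, and costs nothing beyond unwinding notation. The rank claim $\rank \widetilde\V_m = rp(m)$ is guaranteed by the hypothesis $\chi(\widetilde\E \otimes \widetilde\L^n|_{\pi^{-1}(t)}) = rp(n)$ together with $\pi$-flatness of $\widetilde\E \otimes \widetilde\L^m$ and cohomology-and-base-change for $m \gg 0$.

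Next I would treat $\widetilde\V_m \cong \V'_m$. Here $\E'$ is the sheaf produced by applying the Pairs-to-GM transformation of Section 6 to the pair-family $((\pi\colon\widetilde\Sigma\to T,\widetilde\L),\widetilde\E)$; but that transformation is defined precisely so that $\E' = \EE'_m \otimes \L'^{-m}$ with $\EE'_m = \sigma\!\!\!\sigma_\ast(\widetilde\E\otimes\widetilde\L^m)$ (the very same $\sigma\!\!\!\sigma$ and $\widetilde\E$), hence $\V'_m = p_\ast(\E'\otimes\L'^m) = p_\ast\EE'_m = \pi_\ast(\widetilde\E\otimes\widetilde\L^m) = \widetilde\V_m$ by the identical chain of equalities. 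Thus both isomorphisms reduce to the single compatibility $p_\ast\sigma\!\!\!\sigma_\ast = \pi_\ast$ and the definitions. For the composite $2a)$ in the section, the only genuinely new input is that the $\sigma\!\!\!\sigma\colon\widetilde\Sigma\to\Sigma$ used to build $\E'$ from $\widetilde\E$ coincides with the $\sigma\!\!\!\sigma$ of the original resolution of $\E$ — but in the composite $\E\mapsto((\widetilde S,\widetilde L),\widetilde E)\mapsto\E'$ that is true by construction, since the blowup data and the locally free sheaf $\widetilde\E$ are exactly what the first arrow produced.

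I would therefore write the proof as: both $\V_m$ and $\V'_m$ are by definition equal to $p_\ast\bigl(\sigma\!\!\!\sigma_\ast(\widetilde\E\otimes\widetilde\L^m)\bigr)$, and this equals $\pi_\ast(\widetilde\E\otimes\widetilde\L^m) = \widetilde\V_m$ by the Leray-type identity $p_\ast\sigma\!\!\!\sigma_\ast = \pi_\ast$ for the commutative triangle of $T$-morphisms $\widetilde\Sigma\xrightarrow{\sigma\!\!\!\sigma}\Sigma\xrightarrow{p}T$, $\widetilde\Sigma\xrightarrow{\pi}T$; all three are locally free of rank $rp(m)$ for $m\gg0$ by flatness and base change, so the natural identifications are isomorphisms. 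The one point requiring a sentence of care is that the isomorphisms $\L|_{\Sigma_0}\cong\L'|_{\Sigma_0}$ and the codimension-$\ge 2$ estimate (recorded after the description \eqref{class}) force $\L = \L'$ as sheaves on $T\times S$, so there is no discrepancy between $p_\ast(\E\otimes\L^m)$ and $p_\ast(\E'\otimes\L'^m)$ coming from the choice of polarization.

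The main obstacle is not any deep estimate but rather bookkeeping: making sure that the $m$ chosen is uniformly large enough for \emph{all three} sheaves to be locally free with the stated rank and for the relevant multiplication-of-sections maps to be surjective (this is exactly the hypothesis regime already used in Propositions \ref{fltnss} and \ref{hpol}), and verifying that the ``same $\sigma\!\!\!\sigma$, same $\widetilde\E$'' claim is legitimate in the composite — i.e. that the Pairs-to-GM construction applied to the output of the standard resolution literally reuses that output's blowup morphism and vector bundle. Granting that (which is immediate from the definitions in Sections 2 and 6), the lemma follows. I would keep the written proof to three or four lines invoking $p_\ast\sigma\!\!\!\sigma_\ast = \pi_\ast$, the definition $\E = \EE_m\otimes\L^{-m}$ (and likewise for $\E'$), the equality $\L = \L'$, and cohomology-and-base-change for the rank count.
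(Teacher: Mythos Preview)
Your argument for $\widetilde\V_m \cong \V'_m$ is fine and matches the paper: since $\E'$ is \emph{defined} as $\EE_m\otimes\L^{-m}$ with $\EE_m=\sigma\!\!\!\sigma_\ast(\widetilde\E\otimes\widetilde\L^m)$, the identity $p_\ast\sigma\!\!\!\sigma_\ast=\pi_\ast$ immediately gives $\V'_m=\widetilde\V_m$ (the paper just cites the proof of Proposition~\ref{hpol}).

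But your argument for $\V_m\cong\widetilde\V_m$ is circular. You write ``by construction $\E=\EE_m\otimes\L^{-m}$'' --- this is false. In the composite of 2\,{\it a}) the sheaf $\E$ is the \emph{input} family of semistable torsion-free sheaves; the formula $\EE_m\otimes\L^{-m}$ defines the \emph{output} $\E'$ of the Pairs-to-GM step. The equality $\E=\E'$ is exactly the conclusion the section is working toward, and this lemma is a step in that direction, so you cannot invoke it here. What you \emph{do} know relating $\E$ and $\widetilde\E$ is the resolution relation $\widetilde\E=\sigma\!\!\!\sigma^\ast\E/\tors$, which goes the other way.

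The paper's proof uses exactly that relation. From the epimorphism $\sigma\!\!\!\sigma^\ast\E\twoheadrightarrow\widetilde\E$ it builds, via a projection-formula-type inclusion (Lemma~\ref{profor}) and factoring through the reflexive hull, a morphism $\widetilde\eta\colon p_\ast(\E\otimes\L^m)\to\pi_\ast(\widetilde\E\otimes\widetilde\L^m)$ of locally free $\OO_T$-sheaves of the same rank. It then checks $\widetilde\eta$ is an isomorphism by restricting to the fiber over each closed $t\in T$: the induced map $\widetilde\eta_t\colon H^0(S,E\otimes L^m)\to H^0(\widetilde S,\widetilde E\otimes\widetilde L^m)$ is injective (compare with restriction to the common open set where $\sigma_t$ is an isomorphism) and both sides have dimension $rp(m)$. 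So the missing idea in your proposal is precisely constructing this comparison map from $\sigma\!\!\!\sigma^\ast\E\twoheadrightarrow\widetilde\E$ rather than from the (not-yet-available) inverse relation.
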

\begin{proof} Start with the epimorphism
$\sigma\!\!\!\sigma^{\ast} \E \twoheadrightarrow \widetilde \E$ where $\sigma\!\!\!\sigma\colon \widetilde \Sigma \to \Sigma$ is well defined as morphism of the resolution.
Tensoring  by $\widetilde \L^m$ and the direct image
$\sigma\!\!\!\sigma_{\ast}$ yield  the morphism of
$\OO_T$-sheaves
$$\sigma\!\!\!\sigma_{\ast}(\sigma\!\!\!\sigma^{\ast} \E \otimes
\widetilde \L^m) \rightarrow \sigma\!\!\!\sigma_{\ast}(\widetilde
\E \otimes \widetilde \L^m).
$$
Turn our attention to the following result.
\begin{lemma}[\cite{Tim9}] \label{profor} Let $f\colon (X,\OO_X) \to (Y, \OO_Y)$ be a morphism of
locally ringed spaces such that $f_{\ast}\OO_X=\OO_Y$, $\EE$ an
$\OO_Y$-module of finite presentation, $\FF$ an $\OO_X$-module. Then
there is a monomorphism $\EE \otimes f_{\ast}\FF \hookrightarrow
f_{\ast}[f^{\ast}\EE \otimes \FF].$
\end{lemma}
\begin{remark} In a general case we have only the morphism 
$\OO_Y \to f_{\ast} \OO_X$ in our disposal. Applying the inverse
image $f^{\ast}$, tensoring by $\otimes \FF$ and the direct image
$f_{\ast}$ to a finite presentation $E_1 \to E_0 \to \EE \to 0$ we
come to the commutative diagram
$$\xymatrix{f_{\ast} [f^{\ast}E_1 \otimes \FF]\ar[r]&
f_{\ast} [f^{\ast}E_0 \otimes \FF]\ar[r]&
f_{\ast} [f^{\ast}\EE \otimes \FF] \\
E_1 \otimes f_{\ast} \FF \ar[u] \ar[r]&E_0 \otimes f_{\ast} \FF
\ar[u] \ar[r]&\EE \otimes f_{\ast} \FF \ar[u] \ar[r]&0}
$$
where the right hand side is the morphism of interest
$\EE \otimes f_{\ast} \FF \to f_{\ast} [f^{\ast}\EE \otimes
\FF].$
\end{remark}

Setting $\EE=\E,$ $\FF=\widetilde \L^m$, and $f=\sigma\!\!\!\sigma$,
we get
$$
\xymatrix{ \E \otimes \L^m \otimes \sigma\!\!\!\sigma_{\ast}
(\sigma\!\!\!\sigma^{-1} \I\cdot \OO_{\widetilde \Sigma})^m \ar[d] \\
\sigma\!\!\!\sigma_{\ast}(\sigma\!\!\!\sigma^{\ast}\E \otimes
\widetilde \L^m)\ar[r] & \sigma\!\!\!\sigma_{\ast}(\widetilde \E
\otimes \widetilde \L^m)}
$$
After taking the direct image $p_{\ast}$ we come to the commutative diagram 
$$
\xymatrix{ p_{\ast}(\E \otimes \L^m \otimes
\sigma\!\!\!\sigma_{\ast}
(\sigma\!\!\!\sigma^{-1} \I\cdot \OO_{\widetilde \Sigma})^m )\ar[d]\ar[dr]^{\eta}\ar@{^(->}[r]&p_{\ast}(\E \otimes \L^m) \\
p_{\ast}\sigma\!\!\!\sigma_{\ast}(\sigma\!\!\!\sigma^{\ast}\E
\otimes \widetilde \L^m)\ar[r] & \pi_{\ast}(\widetilde \E \otimes
\widetilde \L^m)}
$$
where the upper horizontal arrow is a natural immersion into a reflexive
hull. Since the target sheaf of the composite map $\eta$ is reflexive,
$\eta$ factors through $p_{\ast}(\E\otimes \L^m)$ as a reflexive
hull of the source. This yields  existence of the morphism of
locally free sheaves of equal ranks
$$
\widetilde \eta\colon p_{\ast}(\E\otimes \L^m) \to \pi_{\ast}(\widetilde
\E \otimes \widetilde \L^m).
$$
The morphism of sheaves is an isomorphism iff it is stalkwise
isomorphic. Fix an arbitrary closed point $t\in T$; till the end
of this proof we omit the subscript $t$ in the notations corresponding to
the sheaves corresponding to $t$: $\E_t=:E,$ $\widetilde
E_t=:\widetilde E,$ $\widetilde S_t=:\widetilde S,$
$\sigma_t=:\sigma$, $\sigma \colon \widetilde S \to S.$ There is an epimorphism of
$\OO_{\widetilde S}$-modules
$
\sigma^{\ast} E \otimes \widetilde L^m \twoheadrightarrow
\widetilde E \otimes \widetilde L^m.
$
The analog of the projection formula as in the global case leads to
$$
\xymatrix{ E \otimes L^m \otimes \sigma_{\ast}
(\sigma^{-1} I\cdot \OO_{\widetilde S})^m \ar@{^(->}[d] \\
\sigma_{\ast}(\sigma^{\ast}E \otimes \widetilde L^m)\ar[r] &
\sigma_{\ast}(\widetilde E \otimes \widetilde L^m)}
$$
After taking global sections we come to the commutative diagram 
$$
\xymatrix{H^0(S,E \otimes L^m \otimes \sigma_{\ast} (\sigma^{-1}
I\cdot \OO_{\widetilde S})^m )\ar@{^(->}[d]
\ar[dr]^{\eta_t}\ar@{^(->}[r]&H^0(S,E \otimes L^m)\ar[d]_{\widetilde \eta_t} \\
H^0(S,\sigma_{\ast}(\sigma^{\ast}E \otimes \widetilde L^m))\ar[r]
& H^0(\widetilde S,\widetilde E \otimes \widetilde L^m)}
$$
Here $\widetilde \eta_t$ is also included into the commutative diagram
$$
\xymatrix{H^0(S,E \otimes L^m)\ar[d]_{\widetilde \eta_t}\ar[r] &
H^0(t\times S\cap \Sigma_0),E \otimes L^m)\ar@{=}[d]\\
H^0(\widetilde S,\widetilde E \otimes \widetilde L^m) \ar[r]&
H^0(\widetilde S\cap \widetilde \Sigma_0, \widetilde E \otimes
\widetilde L^m)}
$$
where both horizontal arrows are restriction maps and the upper
restriction map is injective. Hence $\widetilde \eta_t$ is also
injective. Since it is a monomorphism of vector spaces of equal
dimensions is is an isomorphism. Then $\eta\colon \V_m \to \widetilde
\V_m $ is also an isomorphism. The isomorphism $\widetilde \V_m
\cong \V'_m$ has been proven in the previous section (proof of
Proposition~\ref{hpol}).
\end{proof}

Identifying the locally free sheaves $\V_m=\V'_m$ to each other  we consider a relative
Grothendieck scheme $\Quot^{rp(n)}_T (p^{\ast} \V_m \otimes
\L^{-m})$ and two $T$-morphisms of closed immersions
$$
T\times S \stackrel{\widetilde{ev}}{\hookrightarrow}
\Quot^{rp(n)}_T (p^{\ast} \V_m \otimes \L^{-m}) \times S
\stackrel{\widetilde{ev'}}{\hookleftarrow} T\times S.
$$
The morphism $\widetilde{ev}$ is induced by the morphism $ev\colon 
p^{\ast}\V_m \otimes \L^{-m}\twoheadrightarrow \E$ and
$\widetilde{ev'}$ by the morphism $ev'\colon p^{\ast}\V_m \otimes
\L^{-m}\twoheadrightarrow \E'$.

Since both morphisms $\widetilde{ev}$ and $\widetilde{ev'}$ are
proper and they coincide along the open subset~$\Sigma_0$ such that
$\codim_{T\times S} (T\times S) \setminus \Sigma_0\ge 2$, then
$\widetilde{ev}=\widetilde{ev'}$ and $\widetilde{ev}(T\times
S)=\widetilde{ev'}(T\times S)$ in the scheme sense. Hence by
universality of $\Quot$-scheme $\E=\E'$ as inverse images of
the universal quotient sheaf over the scheme $\Quot^{rp(n)}_T (p^{\ast} \V_m
\otimes \L^{-m})$ under the morphisms
$\widetilde{ev}=\widetilde{ev'}.$

For the global version of 2,{\it b}) consider two families $(\pi\colon \widetilde \Sigma \to
T, \widetilde \L)$ and $(\pi'\colon \widetilde \Sigma' \to T,
\widetilde \L')$ and two epimorphisms $\pi^{\ast}\pi_{\ast}\widetilde
\L \twoheadrightarrow \widetilde \L$ and
${\pi'}^{\ast}\pi'_{\ast}\widetilde \L' \twoheadrightarrow
\widetilde \L'$. In general, the families $\widetilde \Sigma$ and $\widetilde \Sigma'$ are not obliged to be isomorphic. We can assume that $\pi_{\ast} \widetilde \L =
\pi'_{\ast} \widetilde \L'$ and then identify the corresponding projective bundles
$\P(\pi_{\ast} \widetilde \L)^{\vee} =\P(\pi'_{\ast} \widetilde
\L')^{\vee}$. We introduce shorthand notations $\P:=\P(\pi_{\ast} \widetilde \L)^{\vee} $ and $\P:=\P(\pi'_{\ast} \widetilde \L')^{\vee}$ and consider a fibered product together with a diagonal embedding 
$$
\xymatrix{&\ar[ld]\P\times _T \P' \ar[dr]\\
\P \ar[dr]&\P_\Delta \ar@{^(->}[u]&\ar[ld] \P'\\
&T}
$$
Closed immersions of $T$-schemes
$i\colon \widetilde \Sigma \hookrightarrow \P$ and $i'\colon \widetilde \Sigma' \hookrightarrow \P'$
complete it to the commutative diagram
\begin{equation}\label{house}
\xymatrix{&&\ar[lldd]\widetilde \Sigma \times_T \widetilde \Sigma' \ar@{^(->}[d] \ar[ddrr]\\
&&\ar[ld]\P\times _T \P' \ar[dr]\\
\widetilde \Sigma \ar[drr]_\pi \ar@{^(->}[r]^i&\P \ar[dr]&\P_\Delta \ar@{^(->}[u]&\ar[ld] \P'&\ar@{_(->}[l]_{i'} \ar[lld]^{\pi'} \widetilde \Sigma'\\
&&T}
\end{equation}
Now we set $\widetilde \Sigma_\Delta:=(\widetilde \Sigma \times_T \widetilde \Sigma') \cap \P_\Delta$ and 
$i_\Delta\colon \widetilde \Sigma_\Delta \hookrightarrow \P_\Delta$. Using the standard notations for ample invertible sheaves on projective spaces we write $\widetilde \L=i^\ast \OO_{\P}(1),$ $\widetilde \L'=i{'^\ast} \OO_{\P'}(1)$, and $\widetilde \L_\Delta =i_\Delta^\ast \OO_{\P_\Delta}(1)$ for the polarization on $\widetilde \Sigma_\Delta.$

Regretting to the one point case (or reducing to a single fiber) we come to the following diagram for the principal components of the fibers:
$$\xymatrix{&\ar[ld]\widetilde S_{\Delta 0} \ar@{^(->}[d] \ar[rd]\\
\widetilde S_0& \ar[l] \widetilde S_0 \times \widetilde S'_0 \ar[r]& \widetilde S'_0}
$$
Here $\widetilde S_{\Delta 0}$ is a closure of the image of the diagonal embedding $U \hookrightarrow U\times U' \subset \widetilde S_0 \times \widetilde S'_0$ of an open subset $U \subset \widetilde S_0$ where $U$ is the maximal open subset where $\sigma\colon \widetilde S \to S$ is isomorphic, $U'$ the maximal open subset where $\sigma'\colon \widetilde S' \to S$ is isomorphic, and $U \cong U'.$

Now switch to immersions into relative Grassmannians. Let $\G:=\Grass(\widetilde \V_m,r)$ and $\G':=\Grass (\widetilde \V'_m, r)$ be the relative Grassmannians of $r$-dimensional quotient spaces in the vector bundles $\widetilde \V_m \cong \widetilde \V'_m$. Let $\QQ$ be a universal quotient sheaf for $\G$ and $\QQ'$  the same for $\G'$, then $\widetilde \E \otimes \widetilde \L^m=j^\ast \QQ$, $\widetilde \E' \otimes \widetilde \L{'^m}=j{'^\ast} \QQ'$ for the closed immersions $j\colon \widetilde \Sigma \hookrightarrow \G$ and $j'\colon \widetilde \Sigma' \hookrightarrow \G'$ respectively.

Considering fibered products of Grassmannians analogous to the ones of projective spaces we come to analogous commutative diagram with a diagonal $\G_\Delta \hookrightarrow \G \times_T \G'$
$$
\xymatrix{&&\ar[lldd]\widetilde \Sigma \times_T \widetilde \Sigma' \ar@{^(->}[d] \ar[ddrr]\\
&&\ar[ld]\G\times _T \G' \ar[dr]\\
\widetilde \Sigma \ar[drr]_\pi \ar@{^(->}[r]^j&\G \ar[dr]&\G_\Delta \ar@{^(->}[u]&\ar[ld] \G'&\ar@{_(->}[l]_{j'} \ar[lld]^{\pi'} \widetilde \Sigma'\\
&&T}
$$
Also $\widetilde \Sigma_\Delta=(\widetilde \Sigma \times _T \widetilde \Sigma') \cap \G_\Delta$ (with the same $\widetilde \Sigma_\Delta$), $j_\Delta\colon \widetilde \Sigma_\Delta\hookrightarrow \G_\Delta$ is the corresponding closed immersion, and the equality $\widetilde \E_\Delta \otimes \widetilde \L^m_\Delta=j_\Delta^\ast \QQ_\Delta$ defines a locally free sheaf $\widetilde \E_\Delta$ on $\widetilde \Sigma_\Delta.$

Now we came to the family $\pi_\Delta\colon \widetilde \Sigma_\Delta \to T$ included into the commutative diagram 
$$\xymatrix{&\ar[ld]_{\widehat \sigma\!\!\!\sigma}\widetilde \Sigma_\Delta \ar[rd]^{\widehat \sigma\!\!\!\sigma'}\\
\widetilde \Sigma \ar[dr]_{\sigma\!\!\!\sigma}&&\ar[ld]^{\sigma\!\!\!\sigma'} \widetilde \Sigma'\\
&\Sigma}
$$
Start with $\widetilde \E$ and prove that $\widehat \sigma\!\!\!\sigma^\ast \widetilde \E\cong \widehat \sigma\!\!\!\sigma{'^\ast} \sigma\!\!\!\sigma{'^\ast}(\EE_m \otimes \L^{-m})/\tors$ for $m\gg 0.$ We use an obvious notation $\widetilde \E'=\sigma\!\!\!\sigma{'^\ast}(\EE_m \otimes \L^{-m})/\tors$ and a symbol $U_\Delta$ for an open subset of $\widetilde \Sigma_\Delta.$ If restricted to an irreducible component, $\widehat \sigma\!\!\!\sigma$ and $\widehat \sigma\!\!\!\sigma'$ take open subsets to open subsets. If any of the schemes $\widetilde \Sigma,$ $\widetilde \Sigma'$ or $\widetilde \Sigma_\Delta$ has more then one component, we replace it by its principal component (a principal component of, say, the scheme $\widetilde \Sigma$ is the one containing the union of principal components of fibers of the morphism $\pi\colon \widetilde \Sigma \to T$). Now we can write down the sheaves of interest explicitly.
\begin{eqnarray*}\widehat \sigma\!\!\!\sigma^\ast \widetilde \E&=&(U_\Delta \mapsto (\widehat \sigma\!\!\!\sigma^{-1} \widetilde \E)(U_\Delta) \otimes_{\widehat \sigma\!\!\!\sigma^{-1}\OO_{\widetilde \Sigma}(U_\Delta)}\OO_{\widetilde \Sigma_\Delta}(U_\Delta))^+\\
&=&(U_\Delta \mapsto  \lim_{\widetilde V \supset \widehat \sigma\!\!\!\sigma (U_\Delta)}\widetilde \E( \widetilde V) \otimes_{\lim_{\widetilde W \supset \widehat \sigma\!\!\!\sigma (U_\Delta)}\OO_{\widetilde \Sigma}( \widetilde W)}\OO_{\widetilde \Sigma_\Delta}(U_\Delta))^+.\end{eqnarray*}
For $\widehat \sigma\!\!\!\sigma{'^\ast} \widetilde \E'$ we use the fact that $\sigma\!\!\!\sigma \circ \widehat \sigma\!\!\!\sigma =\sigma\!\!\!\sigma ' \circ \widehat \sigma\!\!\!\sigma '$, the symbol $\widetilde V'$ for an open subset in $\widetilde \Sigma'$, $\widetilde V$ for an open subset in $\widetilde \Sigma$, $V$ for an open subset in $\Sigma$. 
\begin{eqnarray*}
&&\!\!\widehat \sigma\!\!\!\sigma{'^\ast} \widetilde \E'=(U_\Delta \!\mapsto  \! \lim_{\widetilde V' \supset \widehat \sigma\!\!\!\sigma' (U_\Delta)}\widetilde \E'( \widetilde V') \otimes_{\lim_{\widetilde W' \supset \widehat \sigma\!\!\!\sigma' (U_\Delta)}\OO_{\widetilde \Sigma'}( \widetilde W')}\OO_{\widetilde \Sigma_\Delta}(U_\Delta))^+\\
&&\!\!=(U_\Delta \!\mapsto  \! \lim_{\widetilde V' \supset \widehat \sigma\!\!\!\sigma' (U_\Delta)} (\sigma\!\!\!\sigma{'^\ast}\E/\tors) ( \widetilde V') \otimes_{\lim_{\widetilde W' \supset \widehat \sigma\!\!\!\sigma' (U_\Delta)}\OO_{\widetilde \Sigma'}( \widetilde W')}\OO_{\widetilde \Sigma_\Delta}(U_\Delta))^+\\
&&\!\!=(U_\Delta \!\mapsto  \! \lim_{V \supset \sigma\!\!\!\sigma \widehat \sigma\!\!\!\sigma (U_\Delta)} \E ( V) \otimes_{(\widehat \sigma\!\!\!\sigma^{-1}\sigma\!\!\!\sigma^{-1} \OO_\Sigma) (U_\Delta)}\OO_{\widetilde \Sigma_\Delta}(U_\Delta)/\tors)^+\\
&&\!\!=(U_\Delta \!\mapsto   \!\lim_{V \supset \sigma\!\!\!\sigma \widehat \sigma\!\!\!\sigma (U_\Delta)} \!\sigma\!\!\!\sigma_\ast(\widetilde \E \otimes \widetilde \L^m)( V)\otimes _{\OO_{\Sigma(V)}}\L^{-m}(V) \otimes_{(\widehat \sigma\!\!\!\sigma^{-1}\sigma\!\!\!\sigma^{-1} \OO_\Sigma) (U_\Delta)}\OO_{\widetilde \Sigma_\Delta}(U_\Delta)/\tors)^+\\
&&\!\!=(U_\Delta \!\mapsto \! \lim_{V \supset \sigma\!\!\!\sigma \widehat \sigma\!\!\!\sigma (U_\Delta)} (\widetilde \E \otimes \widetilde \L^m)(\sigma\!\!\!\sigma^{-1} V)\otimes _{\OO_{\Sigma(V)}}\L^{-m}(V) \otimes_{(\widehat \sigma\!\!\!\sigma^{-1}\sigma\!\!\!\sigma^{-1} \OO_\Sigma) (U_\Delta)}\OO_{\widetilde \Sigma_\Delta}(U_\Delta)/\tors)^+\\
&&\!\!=(U_\Delta \!\!\mapsto  \!\lim_{V \supset \sigma\!\!\!\sigma \widehat \sigma\!\!\!\sigma (U_\Delta)}\!\!\widetilde \E (\sigma\!\!\!\sigma^{-1} \!V) \otimes_{\OO_{\Sigma}(\sigma\!\!\!\sigma^{-1} \!V)} \widetilde \L^m(\sigma\!\!\!\sigma^{-1}\! V)\!\otimes _{\OO_{\Sigma(V)}}\!\L^{-m}(V) \!\\
&&\;\;\;\;\;\;\;\;\;\;\;\;\;\;\;\;\;\;\;\;\;\;\;\;\;\;\;\;\;\;\;\;\;\;\;\;\;\;\;\;\;\;\;\;\;\;\;\;\;\;\;\;\;\;\;\;\;\;\;\;\;\;\;\;\;\;\;\;\;\;\;\;\;\;\;\;\;\;\otimes_{(\widehat \sigma\!\!\!\sigma^{-1}\sigma\!\!\!\sigma^{-1} \OO_\Sigma) (\!U_\Delta\!)}\OO_{\widetilde \Sigma_\Delta}\!\!(\!U_\Delta\!)\!/\!\tors)^+.
\end{eqnarray*}
Shrinking  $U_\Delta$ one comes to $\widetilde V \subset \sigma\!\!\!\sigma^{-1} V$ and to the restriction map $\widetilde \E(\sigma\!\!\!\sigma^{-1} V)\stackrel{res}{\longrightarrow} \widetilde \E (\widetilde V)$. This leads to the morphism of inductive limits 
\begin{eqnarray*}
&&\lim_{\widetilde V \supset \widehat \sigma\!\!\!\sigma (U_\Delta)}\widetilde \E( \widetilde V) \otimes_{(\widehat \sigma\!\!\!\sigma^{-1} \OO_{\widetilde \Sigma}) (U_\Delta)} \OO_{\widetilde \Sigma_\Delta}(U_\Delta)\longrightarrow\\
&&\lim_{V \supset \sigma\!\!\!\sigma \widehat \sigma\!\!\!\sigma (U_\Delta)} (\widetilde \E (\sigma\!\!\!\sigma^{-1} V) \!\otimes_{\OO_{\Sigma}(\sigma\!\!\!\sigma^{-1} V)} \!\widetilde \L^m(\sigma\!\!\!\sigma^{-1} V)\!\otimes _{\OO_{\Sigma(V)}}\!\L^{-m}(V) \! \otimes_{(\widehat \sigma\!\!\!\sigma^{-1}\sigma\!\!\!\sigma^{-1}\! \OO_\Sigma)\! (U_\Delta)}\OO_{\widetilde \Sigma_\Delta}(U_\Delta)
\end{eqnarray*}
which yields  the morphism of presheaves 
\begin{eqnarray*}
(\U_\Delta &\mapsto & \lim_{\widetilde V \supset \widehat \sigma\!\!\!\sigma (U_\Delta)}\widetilde \E( \widetilde V) \otimes_{(\widehat \sigma\!\!\!\sigma^{-1} \OO_{\widetilde \Sigma}) (U_\Delta)} \OO_{\widetilde \Sigma_\Delta}(U_\Delta))\longrightarrow\\
(\U_\Delta &\mapsto & \lim_{V \!\supset \sigma\!\!\!\sigma \widehat \sigma\!\!\!\sigma \!(U_\Delta)}  \frac{\widetilde \E (\sigma\!\!\!\sigma^{-1} \!V) \otimes_{\OO_{\Sigma}(\sigma\!\!\!\sigma^{-1} V)} \widetilde \L^m(\sigma\!\!\!\sigma^{-1} \!V)\otimes _{\OO_{\Sigma(V)}}\!\L^{-m}(V) \otimes_{(\widehat \sigma\!\!\!\sigma^{-1}\sigma\!\!\!\sigma^{-1} \!\OO_\Sigma) (U_\Delta\!)}\!\OO_{\widetilde \Sigma_\Delta}\!(\!U_\Delta\!)}{\tors})
\end{eqnarray*}
and, consequently, the morphism of the corresponding associated locally free $\OO_{\widetilde \Sigma_\Delta}$-sheaves $\widehat \sigma\!\!\!\sigma^\ast \widetilde \E \to \widehat \sigma\!\!\!\sigma{'^\ast}\widetilde \E'$. Let $U_0 \subset \widetilde \Sigma_\Delta$ be a maximal open subscheme where both the morphisms $\sigma\!\!\!\sigma'$ and $\widehat \sigma\!\!\!\sigma'$ become isomorphisms.  The restriction of the morphism $\widehat \sigma\!\!\!\sigma^\ast \widetilde \E \to \widehat \sigma\!\!\!\sigma{'^\ast}\widetilde \E'$ to $U_0$ is an  identity isomorphism. Hence, when the sheaf morphism restricted to the maximal reduced subscheme $\widetilde \Sigma_{\Delta red}$, it remains the morphism of locally free sheaves but has a torsion subsheaf as a kernel. It is impossible. We have proven that both  sheaves $\widehat \sigma\!\!\!\sigma^\ast \widetilde \E$ and $\widehat \sigma\!\!\!\sigma{'^\ast}\widetilde \E'$ have equal Hilbert polynomials when restricted to fibers over the base $T$.

From this we conclude the $\OO_{\widetilde \Sigma_\Delta}$-isomorphism $\widehat \sigma\!\!\!\sigma^\ast \widetilde \E \cong \widehat \sigma\!\!\!\sigma{'^\ast}\widetilde \E'$.

\end{document}